\documentclass[11pt,a4paper]{amsart}

%Packages
\usepackage{amssymb,amscd,amsmath,amsthm}
\usepackage{microtype}
\usepackage{graphicx}
\usepackage{mathtools}
\usepackage{hyperref}
\usepackage{thm-restate}
\usepackage{mathrsfs}
\usepackage[british]{babel}

%Theorem styles
\newtheorem{thm}{Theorem}[section]
\newtheorem{cor}[thm]{Corollary}
\newtheorem{lem}[thm]{Lemma}
\newtheorem{prop}[thm]{Proposition}
\newtheorem*{thm*}{Theorem}
\newtheorem*{prop*}{Proposition}
\theoremstyle{definition}
\newtheorem{defn}[thm]{Definition}
\newtheorem*{defn*}{Definition}
\theoremstyle{remark}
\newtheorem{rem}[thm]{Remark}
\newtheorem{exmp}[thm]{Example}

%Almost normal symbol
\newfont{\cusfont}{alnorm}
\newcommand{\alnorm}{\text{\;\cusfont Q\; }}

%Operators

\DeclareMathOperator{\cd}{cd}
\DeclareMathOperator{\cHom}{cHom}

\DeclareMathOperator{\diam}{diam}

\DeclareMathOperator{\Gcd}{Gcd}
\DeclareMathOperator{\lcm}{lcm}
\DeclareMathOperator{\Hom}{Hom}
\DeclareMathOperator{\Haus}{Haus}
\DeclareMathOperator{\im}{im}

\DeclareMathOperator{\supp}{supp}
\DeclareMathOperator{\tor}{Tor}
\DeclareMathOperator{\vcd}{vcd}

%Miscellaneous
\newcommand{\coarse}{H_\textrm{coarse}}

%Bold face maths
\newcommand{\bB}{\mathbf{B}}
\newcommand{\bC}{\mathbf{C}}
\newcommand{\bD}{\mathbf{D}}
\newcommand{\bE}{\mathbf{E}}
\newcommand{\bM}{\mathbf{M}}
\newcommand{\bN}{\mathbf{N}}
\newcommand{\bP}{\mathbf{P}}

%Blackboard bold maths
\newcommand{\bbF}{\mathbb{F}}
\newcommand{\bbN}{\mathbb{N}}
\newcommand{\bbP}{\mathbb{P}}
\newcommand{\bbR}{\mathbb{R}}
\newcommand{\bbZ}{\mathbb{Z}}

%Curly maths

\newcommand{\cG}{\mathcal{G}}

\newcommand{\cZ}{\mathcal{Z}}

\title{Groups of cohomological codimension one}
\author{Alexander J. Margolis}
\address{Alexander J. Margolis, Mathematics Department, Technion - Israel Institute of Technology, Haifa, 32000, Israel}
\email{amargolis@campus.technion.ac.il}
\thanks{This research was supported by the Israel Science Foundation (grant No. \texttt{1026/15}).}
\date{23rd August 2019}

\begin{document}

\begin{abstract}
We show that if $H$ is an almost normal subgroup of $G$ such that both $H$ and $G$ are of type $VFP$ and $\vcd(G)=\vcd(H)+1$, then $G$ is the fundamental group of a graph of groups in which all vertex and edge groups are commensurable to $H$. We also investigate almost normal subgroups of one-relator groups and duality groups.
\end{abstract}
\maketitle
\section{Introduction}
In 1968 Stallings proved the following remarkable theorem.
\begin{thm}[\cite{stallings1968torsionfree}]\label{thm:stallingsCD1}
A finitely generated group of cohomological dimension one is free.
\end{thm}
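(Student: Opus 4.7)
The plan is to deduce from $\cd G = 1$ that $G$ splits as a free product or HNN extension over a trivial subgroup, and then to iterate this splitting to realize $G$ as a free group. The main ingredients are: torsion-freeness and multi-endedness, which come from the cohomological hypothesis; Stallings' theorem on ends of groups; and an accessibility-type argument to iterate.

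First I would record that $\cd G = 1$ forces $G$ to be torsion-free, since any nontrivial finite cyclic subgroup would give $G$ infinite cohomological dimension (by the periodicity of cohomology of cyclic groups). Assuming $G$ is nontrivial and hence infinite, I would then argue that $G$ has more than one end. Via the standard identification $H^1(G, \bbZ G) \cong \widetilde{H}^0(\mathrm{Ends}(G))$, this reduces to showing $H^1(G, \bbZ G) \neq 0$; this can be read off the short exact sequence $0 \to I \to \bbZ G \to \bbZ \to 0$, using that the augmentation ideal $I$ is finitely generated and projective (since $G$ is finitely generated with $\cd G \leq 1$), together with the fact that $(\bbZ G)^G = 0$ when $G$ is infinite.

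Applying Stallings' theorem on ends, $G$ then splits nontrivially as $A *_C B$ or as an HNN extension with edge group $C$, for some finite $C$; torsion-freeness forces $C$ to be trivial, so $G \cong A * B$ or $G \cong A * \bbZ$. A Mayer--Vietoris argument shows that the vertex groups inherit $\cd \leq 1$.

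The principal obstacle is iterating the splitting, because subgroups of a finitely generated group need not themselves be finitely generated, so Stallings' theorem cannot be applied directly to the vertex groups. I would handle this, following Stallings and Swan, by an accessibility-type argument: one refines the Bass--Serre tree of the splitting at each stage and passes to a suitable limit, exploiting that nested unions of subgroups of cohomological dimension at most one retain the same cohomological bound. The outcome is a realization of $G$ as the fundamental group of a graph of trivial groups, that is, a free group.
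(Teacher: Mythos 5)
The paper does not prove Theorem~\ref{thm:stallingsCD1}; it is quoted from Stallings, so there is no internal proof to compare against. Your outline does track the standard argument: $\cd G = 1$ forces torsion-freeness, the identification of $H^1(G,\bbZ G)$ with (reduced) ends together with the fact that for a group of type $FP$ the top nonvanishing degree of $H^*(G,\bbZ G)$ equals $\cd G$ shows $G$ is multi-ended, Stallings' ends theorem then gives a splitting over a finite and hence trivial subgroup, Mayer--Vietoris passes the bound $\cd \leq 1$ to the factors, and one iterates.

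However, you misidentify the ``principal obstacle.'' When $G$ is finitely generated and splits over the \emph{trivial} subgroup, the vertex groups are automatically finitely generated: $G \cong A*B$ or $G \cong A * \bbZ$, and Grushko's theorem gives $\mathrm{rank}(A*B) = \mathrm{rank}(A)+\mathrm{rank}(B)$, so each factor has finite (and strictly smaller, when both are nontrivial) rank. This both removes the finite-generation worry and supplies the termination for free: induct on the minimal number of generators of $G$. No passage to a direct limit of Bass--Serre trees and no appeal to Dunwoody-style accessibility is needed at this level. The limiting argument you sketch (nested unions preserving $\cd\leq 1$) is really Swan's extension of the theorem to arbitrary, not-necessarily-finitely-generated, groups of cohomological dimension one; invoking it here is more machinery than the finitely generated case requires and, as phrased, conflates two distinct proofs. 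With the accessibility step replaced by a Grushko induction on rank, your argument is correct and is essentially Stallings' original proof.
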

\noindent Swan later proved that all groups of cohomological dimension one  are free \cite{swan1969codim1}.  This was generalised by Dunwoody  to  all groups of cohomological dimension  one over any commutative ring $R$ \cite{dunwoody1979accessibility}.  In this article we prove a higher dimensional generalisation of Theorem \ref{thm:stallingsCD1}.

Let $G$ be a group. Two subgroups $H$ and $K$ are \emph{commensurable} if $H\cap K$ has finite index in both $H$ and $K$. A subgroup $H$ is \emph{almost normal} in $G$, denoted $H\alnorm G$,  if  $H$ is commensurable to $gHg^{-1}$ for every $g\in G$.  Almost normal subgroups are also known as commensurated subgroups, near normal subgroups and inert subgroups.
We say that $G$ is of \emph{type $VFP$} if some finite index subgroup has a  finitely dominated Eilenberg-MacLane space. The \emph{virtual cohomological dimension}  of $G$, denoted $\vcd(G)$, is the cohomological dimension of such a finite index subgroup. The main result of this article is the following:
\begin{thm}\label{thm:mainintro}
Let $H$ and $G$ be groups of type $VFP$ such that $H\alnorm G$ and  $\vcd(G)=\vcd(H)+1$.  Then $G$ is the fundamental group of a finite graph of groups in which every vertex and edge group is commensurable to $H$.
\end{thm}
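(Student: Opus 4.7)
The plan is to construct a non-trivial action of $G$ on a simplicial tree $T$ with finite quotient whose edge and vertex stabilisers are commensurable to $H$; the conclusion then follows from Bass-Serre theory. After passing to a finite-index torsion-free subgroup I may assume $G$ and $H$ are of type $FP$, $n := \cd(G) = \cd(H) + 1$, and $H$ is still commensurated in $G$.

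The core step is to produce a ``codimension one'' splitting of $G$ over a subgroup commensurable with $H$. The input is the cohomological gap $H^n(G,\bbZ G)\neq 0$ while $H^n(H,\bbZ H)=0$. To exploit this, I consider the permutation $G$-module $\bbZ[G/H]$ and aim to relate $H^*(G, \bbZ[G/H])$ to invariants of $H$. The hypothesis that $H$ is commensurated is precisely the condition under which the $G$-action on $G/H$ has ``almost finite'' stabilisers, and this is what allows the operators $\coarse$ and $\cHom$ from the preamble to play the role of a coarse refinement of Shapiro's lemma. With such a tool in hand, the drop $\cd(G) - \cd(H) = 1$ should translate into the pair $(G,H)$ having more than one coarse end, producing an $H$-almost-invariant subset $A\subseteq G$ in the sense of Dunwoody-Kropholler-Roller.

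Given such an $A$, I apply Dunwoody's tracks in a finite $K(G,1)$-complex to obtain a non-trivial action of $G$ on a simplicial tree $T$ whose edge stabilisers are commensurable with $H$. A Bass-Serre/Mayer-Vietoris argument combined with $\cd(G) = \cd(H)+1$ forces each vertex stabiliser to also have $\vcd$ equal to $\cd(H)$; together with the fact that such a stabiliser contains commensurable copies of $H$ (the endpoints of any edge it stabilises), this shows it is itself commensurable with $H$. Finally, Dunwoody-style accessibility, applicable since $G$ is of type $FP$ and in particular $FP_2$, lets me replace $T$ by a minimal $G$-tree with finite quotient, giving the desired finite graph of groups.

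I expect the main obstacle to be the detection step: converting the $\cd$ gap into an almost-invariant set when $H$ is only commensurated (not normal) requires a genuine \emph{coarse} analogue of Bieri-Eckmann-Shapiro duality, since a direct application of Shapiro's lemma to $\bbZ[G/H]$ is unavailable. Setting up this coarse duality, presumably via the $\coarse$ and $\cHom$ formalism introduced in the preamble and exploiting the commensurator action, is where most of the technical work will have to live.
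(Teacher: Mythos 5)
Your roadmap has the right spine and is genuinely close to the paper's: detect more than one coarse end of the pair $(G,H)$ from the cohomological gap, split via a relative Stallings/Dunwoody theorem, iterate by accessibility, and then show the terminal vertex groups are commensurable to $H$. The ``coarse Shapiro'' you anticipate is realised in the paper as a coarse K\"unneth theorem (Theorem~\ref{thm:coarse kunneth}) applied to the coarse fibre bundle $G\to G/H$ with fibre $H$; the commensurability of $H$ is exactly what makes $G/H$ a bounded-geometry quasi-geodesic metric space and $p\colon G\to G/H$ a coarse bundle, and then, working over a field $\bbF$, the K\"unneth isomorphism $\coarse^{n+1}(G;\bbF)\cong\coarse^{n}(H;\bbF)\otimes\coarse^{1}(G/H;\bbF)$ (Lemma~\ref{lem:higher cohom vanish}) forces $\coarse^1(G/H;\bbF)\neq 0$, i.e.\ $e(G/H)>1$. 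Reduction from a PID to a field is handled by Proposition~\ref{prop:type vf} plus the universal coefficient theorem for coarse cohomology.

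There are two places where your plan as written has a gap. First, the opening move --- ``pass to a finite-index torsion-free subgroup and assume $G,H$ are of type $FP$'' --- does not suffice: a graph-of-groups decomposition of a finite-index subgroup $G'$ does not give one of $G$, and a finite $K(G',1)$-complex (which you invoke for tracks) need not exist even for $FP$ groups, since $FP\not\Rightarrow$ finitely presented. The paper sidesteps this entirely by working with $G$ itself and replacing $\vcd$ by Gorenstein cohomological dimension $\Gcd$ (Proposition~\ref{prop:gorenstein}), which is insensitive to torsion and interacts with the same non-vanishing invariant $H^n(G,RG)$; the relative accessibility theorem it uses (Theorem~\ref{thm:dunwoody accessibility}) is stated for almost finitely presented $G$, no torsion-freeness needed. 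You could salvage your reduction by observing that $\tilde e(G,H)=\tilde e(G',H')$ and splitting $G$ rather than $G'$, but you would then have to reprove the whole K\"unneth computation for the original pair anyway. Second, after one splitting, the Mayer--Vietoris/Bass--Serre argument does \emph{not} force vertex stabilisers to have $\vcd=\cd(H)$; a vertex group can still have $\vcd=\cd(H)+1$ and split further. The correct order is the paper's: apply accessibility first to obtain a decomposition whose vertex groups $G_v$ do not split over subgroups commensurable to $H$, so $\coarse^1(G_v/H_v)=0$, and only then does the K\"unneth/Gcd argument (Lemma~\ref{lem:higher cohom vanish}, part~(\ref{item:finindex})) force $\Gcd(G_v)=\Gcd(H)$ and hence $[G_v:H_v]<\infty$.
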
 
In Theorem \ref{thm:mainintro} we do not  assume that $H$ is a codimension one subgroup in the sense of Houghton--Scott, i.e.  that $e(G,H)>1$. In particular, Theorem \ref{thm:mainintro} gives a genuinely new way for showing a group splits. Theorem \ref{thm:mainintro} will be deduced from Theorem \ref{thm:main vfp}. We also prove a similar result for Gorenstein cohomological dimension, a generalisation of cohomological dimension that is finite for a much wider class of groups (see Proposition \ref{prop:CW complex -> finite gcd}).

The hypothesis that $H\alnorm G$ is crucial and cannot be weakened.  Indeed, the Euclidean triangle group $G=\langle a,b,c\mid a^2,b^2,c^2,(ab)^3,(ac)^3,(bc)^3\rangle$ has virtual cohomological dimension two and the infinite cyclic subgroup $H=\langle ab\rangle$ has cohomological dimension one.  However,  $G$ has Serre's property $FA$, so the conclusion of  Theorem \ref{thm:mainintro} does not hold, even though $H$ is (almost) normal in some finite index subgroup of $G$.

Similar results have been obtained by Bieri in the case where $H$ is  normal, although Theorem \ref{thm:mainintro} appears to be  stronger than previously known results even in this case  \cite{bieri1976normal, bieri1978cd2}.
Kropholler proved a  special case of this theorem  when $H$ is infinite cyclic, and then later when $H$ is a Poincar\'e duality group that contains no non-abelian free subgroup \cite{kropholler1990CD2,kropholler2006spectral}. Walker also proved a special of Theorem \ref{thm:mainintro} when $H$ is a  Poincar\'e duality group of non-zero Euler characteristic \cite{walker2010generalisations}. The methods used in the proof of  Theorem \ref{thm:mainintro} are  more geometric than those used by Kropholler and Walker and are of interest in their own right.

Groups containing almost normal subgroups that are not normal often have exotic properties and are a frequent source of counterexamples and pathologies in group theory. For instance:
\begin{enumerate}
\item Baumslag and Solitar constructed the first known examples of  non-Hopfian groups \cite{baumslagsolitar1962}.
\item  Burger and Mozes constructed  finitely presented torsion-free simple groups that act cocompactly on the product of two trees \cite{burgermozes97simple}.
\item Leary and Minasyan constructed CAT(0) groups that are not biautomatic  \cite{leary2019commensurating}.
\end{enumerate}
 These three families of groups all satisfy the hypotheses of Theorem \ref{thm:mainintro}.

This is the second in a sequence of articles examining geometric and topological properties of groups containing almost normal subgroups. A key ingredient in our approach, used extensively in \cite{margolis2019almostnormal}, is that although there is not necessarily a quotient group $G/H$ when $H\alnorm G$, there is still a \emph{quotient space} $G/H$, well-defined up to quasi-isometry, that has many of the properties one might expect from the Cayley graph of the quotient group.  In particular, $G$ can be thought of as a coarse fibre bundle over $G/H$.  
In \cite{margolis2019almostnormal}, the author investigated when this coarse fibre bundle structure is preserved by quasi-isometries. In this article, we use the action of $G$ on the quotient space $G/H$ to prove Theorem \ref{thm:mainintro}.

We briefly outline the proof of Theorem \ref{thm:mainintro}. We first use the hypothesis that $\vcd(G)=\vcd(H)+1$ to show that the quotient space $G/H$ is ``cohomologically one-dimensional''. The main tool that is used is Theorem \ref{thm:coarse kunneth} --- a K\"unneth theorem for coarse bundles. Cohomological one-dimensionality of $G/H$ implies it  has more than one end. We  now mirror Stallings proof of Theorem \ref{thm:stallingsCD1} and show  $G$ splits as an amalgamated free product or HNN extension over a subgroup commensurable to $H$. It follows from a variant of Dunwoody's accessibility theorem  that the process of iteratively splitting $G$ over subgroups commensurable to $H$ eventually terminates \cite{dunwoody1985accessibility}, from which Theorem \ref{thm:mainintro} readily follows.

\subsection*{Applications and other results}

The techniques used in the proof of Theorem \ref{thm:mainintro} show that if $\vcd(H)=\vcd(G)$, then the quotient space is ``cohomologically zero-dimensional'' and hence bounded. We thus deduce the following, which  is stated as part of Lemma \ref{lem:higher cohom vanish}:
\begin{prop}\label{prop:finindexintro}
Suppose $H\alnorm G$ are groups of type $VFP$. Then $H$ is a  finite index subgroup of $G$ if and only if $\vcd(H)=\vcd(G)$.
\end{prop}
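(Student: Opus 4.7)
The forward implication is a standard consequence of Serre's theorem: passing to a torsion-free finite index subgroup $G_0 \leq G$, the intersection $G_0 \cap H$ has finite index in both $H$ and $G_0$, so $\vcd(G) = \cd(G_0) = \cd(G_0 \cap H) = \vcd(H)$.

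For the converse, suppose $\vcd(H) = \vcd(G) =: n$, and pass to a torsion-free finite-index subgroup so that we may assume $\cd(H) = \cd(G) = n$ with both groups of type $FP$. The plan is to exploit the coarse fibre bundle structure of $G$ over the quotient space $G/H$, whose fibres are uniformly quasi-isometric to $H$, and apply the K\"unneth theorem for coarse bundles (Theorem \ref{thm:coarse kunneth}). Since $H$ is of type $FP$ and attains its cohomological dimension, the top-degree coarse cohomology of the fibre $H$ is nontrivial. If the coarse cohomology of the base $G/H$ were nonzero in some positive degree $k$, the K\"unneth formula would combine this with the nontrivial top-degree cohomology of the fibre to produce nonvanishing cohomology of $G$ in some degree $n+k$, contradicting $\cd(G) = n$. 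Hence $G/H$ must be \emph{cohomologically zero-dimensional}.

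The final step is to convert cohomological zero-dimensionality of $G/H$ into genuine boundedness, by analogy with the classical fact that a finitely generated group is finite if and only if its reduced zero-dimensional cohomology with regular coefficients vanishes. Since the quotient space may be modelled as a connected locally finite graph on which $G$ acts cocompactly, bounded diameter forces the vertex set to be finite, yielding $[G:H] < \infty$.

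The main obstacle is controlling the precise form of the K\"unneth relation in the coarse setting: one must rule out the possibility that cancellation between different bidegrees allows $G/H$ to carry positive-degree cohomology while still producing $\cd(G) = n$. In a purely tensorial K\"unneth formula this is automatic, but a coarse K\"unneth formula may introduce correction terms (such as $\tor$ summands) that need careful treatment. Converting cohomological triviality into bounded diameter, while conceptually transparent by analogy with Hopf's theorem, will likewise need to be made precise in the coarse geometric language already developed for Theorem \ref{thm:mainintro}.
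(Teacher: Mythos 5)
Your overall strategy --- applying the coarse K\"unneth theorem for the bundle $G\to G/H$ and then converting a cohomological statement about the base into boundedness --- is exactly the route the paper takes, but there are two genuine gaps, one of which you flag yourself and one of which you seem not to notice.

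First, the $\tor$ problem you raise is real and is not merely a matter of careful bookkeeping. Over $\bbZ$, the tensor product of two nonzero modules can vanish, so the step ``nonzero fibre cohomology tensored with nonzero base cohomology gives nonzero total space cohomology'' does not go through. The paper resolves this by reducing to a \emph{field}: by Proposition \ref{prop:type vf} there is a residue field $R_i$ of $\bbZ$ (either $\bbQ$ or some $\bbZ/p$) with $\vcd_{R_i}(H)=\vcd_\bbZ(H)$, and then $\vcd_{R_i}(H)\leq\vcd_{R_i}(G)\leq\vcd_\bbZ(G)=\vcd_\bbZ(H)$ forces $\vcd_{R_i}(G)=\vcd_{R_i}(H)$. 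Over a field all $\tor$ terms vanish and nonzero $\otimes$ nonzero is nonzero, so the K\"unneth argument works cleanly. This is the content of Lemma \ref{lem:higher cohom vanish}, which is proved with field coefficients throughout. Your ``pass to a torsion-free finite index subgroup'' does not accomplish this reduction; you still need the coefficient ring to be a field.

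Second, the final step is not simply ``cohomological zero-dimensionality of $G/H$ implies boundedness.'' Showing $\coarse^k(G/H;\bbF)=0$ for $k>0$ does not by itself imply $G/H$ is bounded: by Proposition \ref{prop:cohomatinf}, boundedness is equivalent to $\coarse^0(G/H;\bbF)\neq 0$, and an unbounded space has $\coarse^0=0$ without this forcing anything in positive degrees. (Your ``analogy with the classical fact'' is also backwards --- a group is finite iff $H^0(G,RG)$ is \emph{nonzero}, not iff it vanishes.) The missing step is to run the K\"unneth formula once more: with $n=\vcd_\bbF(G)=\vcd_\bbF(H)$ and $\coarse^j(G/H;\bbF)=0$ for $j>0$, the K\"unneth theorem collapses to $\coarse^n(G;\bbF)\cong\coarse^n(H;\bbF)\otimes_\bbF\coarse^0(G/H;\bbF)$, and since $\coarse^n(G;\bbF)\neq 0$ (as $\vcd_\bbF(G)=n$), one concludes $\coarse^0(G/H;\bbF)\neq 0$, hence $G/H$ is bounded and $[G:H]<\infty$. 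Without that positive computation in degree zero the argument is incomplete.
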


We can use Theorem \ref{thm:mainintro} and Proposition \ref{prop:finindexintro} to  investigate properties of  almost normal subgroups in groups of  low cohomological dimension.  The following statement generalises the fact that a finitely generated normal subgroup of a finite rank free group is either trivial or of finite index. 
\begin{cor}\label{cor:alnorm freegp}
If $G$ is a finitely generated free group and $H\alnorm G$ is finitely generated and non-trivial, then $H$ is a finite index subgroup of $G$.
\end{cor}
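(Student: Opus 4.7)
The plan is to reduce the statement directly to Proposition~\ref{prop:finindexintro} by computing the virtual cohomological dimensions of $H$ and $G$ and checking that the type $VFP$ hypothesis holds.

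First I would dispose of the trivial edge cases: if $G$ is trivial there is no non-trivial $H$ and the statement is vacuous, and if $G$ is infinite cyclic then any non-trivial subgroup of $G$ already has finite index by elementary group theory. So I would henceforth assume $G$ is a non-abelian finitely generated free group, in which case $\vcd(G) = \cd(G) = 1$.

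Next I would observe that $H$ inherits the relevant finiteness properties from $G$. Since $H$ is a subgroup of a free group it is itself free by the Nielsen--Schreier theorem, and since it is finitely generated it is a finitely generated free group. In particular $G$ and $H$ are both of type $F$, hence of type $VFP$, which is the key hypothesis needed to apply Proposition~\ref{prop:finindexintro}. Because $H$ is non-trivial and free, $\cd(H) = 1 = \vcd(H)$.

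Having arranged that $H \alnorm G$ are of type $VFP$ with $\vcd(H) = \vcd(G) = 1$, the ``if'' direction of Proposition~\ref{prop:finindexintro} immediately gives that $H$ has finite index in $G$. I do not anticipate any real obstacle, since all the work is bundled into the already-established Proposition~\ref{prop:finindexintro}; the only thing to check is the mild bookkeeping that finitely generated subgroups of free groups are free of cohomological dimension one, which follows from Nielsen--Schreier (or indeed from Theorem~\ref{thm:stallingsCD1} itself).
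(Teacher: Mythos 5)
Your proof is correct and matches the approach the paper clearly intends (it states the corollary directly after Proposition~\ref{prop:finindexintro} and remarks that it can ``also'' be deduced from Marshall Hall's theorem, signalling that the primary proof is via Proposition~\ref{prop:finindexintro}). The verification that $H$ and $G$ are of type $VFP$ with $\vcd(H)=\vcd(G)=1$ via Nielsen--Schreier is exactly the needed bookkeeping; the case split on $G$ trivial or infinite cyclic is harmless but unnecessary, since the argument already covers $G\cong\bbZ$.
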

Corollary \ref{cor:alnorm freegp} can also be deduced from Marshall Hall's theorem.
Things are more interesting in dimension two, where we obtain the following generalisation of \cite[Theorem D]{bieri1978cd2}.
\begin{cor}\label{cor:codim2}
Let $G$ be a finitely presented group of virtual cohomological dimension two. Suppose $H\alnorm G$ is  infinite and finitely presented. Then either:
\begin{enumerate}
\item $H$ is a finite index subgroup of $G$;
\item $H$ is virtually free and $G$ splits as a graph of groups in which all vertex and edge groups are   commensurable to $H.$
\end{enumerate}
\end{cor}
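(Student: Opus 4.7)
The plan is to split into two cases based on the cohomological dimension of a torsion-free finite index subgroup of $H$, reducing to Proposition \ref{prop:finindexintro} and Theorem \ref{thm:mainintro}. To set up the case split, fix a torsion-free finite index subgroup $G_0$ of $G$ with $\cd(G_0) = 2$, and let $H_0 = H \cap G_0$, a torsion-free finite index subgroup of $H$. Since $H_0 \leq G_0$ with $\cd(G_0) < \infty$, we have $\cd(H_0) \leq 2$; since $H$ is infinite, $\cd(H_0) \geq 1$.

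Before applying either main result, I want to verify that both $G$ and $H$ are of type $VFP$. Because $G_0$ is finitely presented (hence $FP_2$) with $\cd(G_0) = 2$, a partial free resolution of $\bbZ$ over $\bbZ G_0$ that is finitely generated up through dimension $2$ has projective kernel in dimension $2$, yielding a length-two finite projective resolution; Wall's criterion then produces a finitely dominated $K(G_0, 1)$, so $G$ is of type $VFP$. The same reasoning applies to $H_0$ in the case $\cd(H_0) = 2$, since $H_0$ inherits finite presentation from $H$ via finite index.

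In the case $\cd(H_0) = 2$, one has $\vcd(H) = \vcd(G)$ with both $H$ and $G$ of type $VFP$, and Proposition \ref{prop:finindexintro} directly gives $[G:H] < \infty$, yielding conclusion (1). In the case $\cd(H_0) = 1$, the Stallings-Swan theorem (Theorem \ref{thm:stallingsCD1} together with Swan's extension) implies $H_0$ is free; being finitely generated it is a finitely generated free group, so $H$ is virtually free and in particular of type $VFP$ with $\vcd(H) = 1 = \vcd(G) - 1$. Theorem \ref{thm:mainintro} then produces the desired graph-of-groups decomposition in which every vertex and edge group is commensurable to $H$, giving conclusion (2).

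The only nontrivial ingredient beyond the theorems already in place is the passage from finite presentation to $VFP$ in dimension two; every remaining step is either Stallings-Swan or a direct application of the main results of the article. The main potential pitfall is the circularity of invoking $\vcd(H)$ before establishing $VFP$ for $H$, which is why the case split is phrased in terms of $\cd(H_0)$ rather than $\vcd(H)$.
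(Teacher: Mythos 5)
Your proof is correct and follows the approach the paper intends (the introduction indicates Corollary \ref{cor:codim2} is to be deduced from Theorem \ref{thm:mainintro} and Proposition \ref{prop:finindexintro}); the trichotomy on $\cd(H_0)\in\{1,2\}$, the use of Stallings--Swan in the dimension-one case, and the reduction to those two results are exactly what is called for. The only point worth noting is that you correctly identified the one nontrivial bridge step, namely passing from ``finitely presented with $\cd\leq 2$'' to type $FP$ (via the fact that for an $FP_2$ group with $\cd=2$ the kernel $\ker(P_1\to P_0)$ in a finitely generated partial free resolution is finitely generated projective) and then to a finitely dominated $K(\cdot,1)$ via Wall, which is precisely why the finite-presentation hypothesis on $H$ cannot be weakened to finite generation, as the paper remarks immediately after the corollary.
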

The conclusion of Corollary \ref{cor:codim2} is false if we only assume that $H$ is finitely generated but not finitely presented.  Corollary \ref{cor:codim2}  can be strengthened when $G$ is a one-relator group:

\begin{restatable*}{thm}{onerel}\label{thm:1rel}
Let $G$ be a one-relator group and let $H\alnorm G$ be a  finitely presented almost normal subgroup that is infinite and of infinite index. Then $G$ is torsion-free and two-generated. Moreover, one of the following holds:
\begin{enumerate}
\item  $H$ is infinite cyclic and $G$ splits as a graph of groups in which all vertex and edge groups are infinite cyclic and commensurable to $H$.
\item $G$ contains a free normal subgroup $N$ that is commensurable to $H$ such that $G/N\cong \bbZ$ or $\bbZ_2*\bbZ_2$.
\end{enumerate}
In particular,  $G$ is either a generalised Baumslag--Solitar group or is virtually a free-by-cyclic group.
\end{restatable*}

Bieri showed that if a group $H$ is of type $VFP$ and is a normal subgroup of a duality group $G$, then $H$ and $G/H$  must  also be duality groups  \cite[Theorem A]{bieri1976normal}. We partially generalise this  to the setting of almost normal subgroups. The following  is a special case of Theorems \ref{thm:duality} and \ref{thm:pdn_gps}.
\begin{thm}
Suppose $G$ is a virtual (Poincar\'e) duality group. If $H\alnorm G$ is of type $VFP$, then  $H$ is also a virtual (Poincar\'e) duality group. 
\end{thm}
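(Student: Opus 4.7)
The plan is to exploit the coarse fibre bundle structure of $G$ over the quotient space $G/H$ together with the coarse K\"unneth formula (Theorem \ref{thm:coarse kunneth}), mirroring Bieri's argument in the normal case but replacing the Lyndon--Hochschild--Serre spectral sequence with its coarse analogue.

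First I would reduce to a torsion-free setting. By definition of virtual (Poincar\'e) duality there is a finite index subgroup $G_0\leq G$ which is a (Poincar\'e) duality group of dimension $n=\vcd(G)$. Replacing $H$ by $H\cap G_0$ (still almost normal in $G_0$ and of type $VFP$) and then passing to a common torsion-free finite index subgroup, it suffices to prove: if $G$ is a (Poincar\'e) duality group of dimension $n$ and $H\alnorm G$ is of type $FP$, then $H$ is a (Poincar\'e) duality group. Let $k=\cd(H)$; the target dimension for $H$ is $k$, and by Proposition \ref{prop:finindexintro} we already know $k<n$ unless $[G:H]<\infty$, in which case the conclusion is immediate.

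Next I would apply the coarse K\"unneth theorem to obtain, up to possible $\tor$ corrections, a decomposition of the form
\[
H^{*}(G,\bbZ G)\;\cong\; H^{*}(H,\bbZ H)\;\otimes_{\bbZ}\;\coarse^{*}(G/H),
\]
where $\coarse^{*}(G/H)$ denotes the coarse cohomology of the quotient space developed in Section 2/\cite{margolis2019almostnormal}. Since $G$ is a duality group, the left-hand side vanishes for $*\neq n$ and is torsion-free in degree $n$. The concentration on the left forces concentration on each tensor factor: $H^i(H,\bbZ H)=0$ for $i\neq k$ and $\coarse^{j}(G/H)=0$ for $j\neq n-k$, since a non-trivial product of bidegree lower than $(k,n-k)$ would contribute to $H^{<n}(G,\bbZ G)$. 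Torsion-freeness of $H^{n}(G,\bbZ G)$ then forces $H^{k}(H,\bbZ H)$ to be torsion-free (using that a tensor product with torsion on one side has torsion), so $H$ is a duality group of dimension $k$. For the Poincar\'e case the extra hypothesis $H^{n}(G,\bbZ G)\cong\bbZ$ turns the K\"unneth identification into $H^{k}(H,\bbZ H)\otimes\coarse^{n-k}(G/H)\cong\bbZ$, which is only possible when both tensor factors are isomorphic to $\bbZ$, so $H$ is a Poincar\'e duality group.

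The main obstacle is the last paragraph: one must ensure the K\"unneth formula is available with $\bbZ$-coefficients rather than just rationally, so any $\tor$ terms arising from torsion in $\coarse^{*}(G/H)$ or $H^{*}(H,\bbZ H)$ need to be controlled. One route is to establish the vanishing of $\coarse^{j}(G/H)$ for $j\neq n-k$ directly, using the $VFP$ hypothesis on both $G$ and $H$ to bound the coarse cohomological dimension of $G/H$ by $n-k$, and then deduce from this vanishing that the K\"unneth short exact sequence degenerates. A secondary subtlety is verifying that the coarse cohomology of $G/H$ is in fact non-zero in degree $n-k$ (rather than globally trivial); this should be forced by the existence of the class generating $H^{n}(G,\bbZ G)$ together with the K\"unneth identification, closing the argument.
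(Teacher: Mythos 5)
Your framework is the same as the paper's---apply the coarse K\"unneth theorem (Theorem \ref{thm:coarse kunneth}) to the coarse bundle $G\to G/H$ to transfer the duality-group property from $G$ to $H$---but the key deduction in your middle paragraph does not go through with integer coefficients, and you have not correctly diagnosed why.

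You claim that the vanishing of $H^{\ell}(G,\bbZ G)$ for $\ell<n$ forces each tensor factor $H^{i}(H,\bbZ H)\otimes\coarse^{j}(G/H)$ with $i+j<n$ to vanish \emph{because the factors are concentrated in single degrees}. Over a field this inference is valid, but over $\bbZ$ (or a general PID) it is not: a tensor product of two non-zero $\bbZ$-modules can vanish (e.g.\ $\bbZ_2\otimes\bbZ_3=0$), and likewise $\tor^{\bbZ}_1$ of two non-zero modules can vanish. So $H^{i}(H,\bbZ H)$ could be non-zero in several degrees without producing a non-zero contribution to $H^{<n}(G,\bbZ G)$. Your ``main obstacle'' paragraph identifies a need to control $\tor$ terms, but that is the smaller of the two problems; the larger one is the possible vanishing of tensor products of non-zero modules, and your proposed fix (``bound the coarse cohomological dimension of $G/H$ by $n-k$ directly'') is not established by anything you have said and is not obviously easier than the original question.

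The paper's proof sidesteps this entirely. It applies the coarse K\"unneth formula over each residue field $R_i$ (the fraction field $R_0$ and the fields $R_p=R/pR$), where tensor products of non-zero vector spaces are never zero and all $\tor$ terms vanish, concluding that $H$ is a duality group of some dimension $m_i$ over each $R_i$. It then invokes Bieri's theorem that $H$ is a virtual duality group over $R$ if and only if it is so, of a uniform dimension, over each $R_i$, and the remaining work is to show $m_p=m_0$ for all $p$. This last step is exactly where the subtlety lies: the paper first argues that $\coarse^i(H;R)$ is torsion for $i\neq m_0$, so that the K\"unneth decomposition of the torsion-free module $\coarse^n(G;R)$ collapses to the single term $\coarse^{m_0}(H;R)\otimes_R\coarse^{n-m_0}(G/H;R)$, and then tensors with $R_p$ and uses the universal coefficient theorem (Proposition \ref{prop:univ coefficient}) to force $\coarse^{m_0}(H;R_p)\neq 0$. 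None of this machinery---Bieri's characterisation over residue fields, the torsion argument, the universal coefficient step---appears in your proposal, and without it the direct integral K\"unneth argument has a genuine gap. (Your final claim that $A\otimes B\cong\bbZ$ forces $A\cong B\cong\bbZ$ also needs justification; the paper instead uses flatness of $\coarse^{m}(H;R)$ to inject it into $R$ and concludes it is isomorphic to an ideal.)
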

\subsection*{Organization of paper}
 Sections \ref{sec:prelims} and \ref{sec:ccd} consist of preliminary material. In   Section \ref{sec:prelims} we introduce coarse geometric and topological notions, including the coarse cohomology used in \cite{margolis2018quasi}. In Section \ref{sec:ccd} we discuss various finiteness properties of groups, including Gorenstein cohomological dimension and its relation to coarse cohomology. 
In Section \ref{sec:coarse bundles} we introduce the notion of coarse bundles and investigate their  cohomological properties. In particular, we prove Theorem \ref{thm:coarse kunneth}, a K\"unneth theorem for coarse bundles. We also use  Brown's criterion to deduce Proposition \ref{prop:acyclicity of quotient space}, which demonstrates that quotient spaces are coarsely uniformly acyclic. In Section \ref{sec:main result} we bring all these ingredients together to prove Theorems \ref{thm:main_technical} and \ref{thm:main vfp}, from which Theorem \ref{thm:mainintro} follows easily. In Sections \ref{sec:1rel} and \ref{sec:duality groups} we investigate almost normal subgroups of one-relator groups and duality groups respectively.

\section{Coarse geometry and topology}\label{sec:prelims}
We fix a a PID $R$. All homology and cohomology will be taken with coefficients in $R$.
\subsection{Large-scale geometry}
Given a metric space $(X,d)$, a subset $A\subseteq X$ and  $r\geqslant 0$, the $r$-metric neighbourhood of $A$ is defined to  be \[N_r(A)=\{x\in X\mid d(x,a)\leqslant r \text{ for some $a\in A$}\}.\]  The \emph{Hausdorff distance} $d_{\Haus}(A,B)$ between $A,B\subseteq X$ is defined to be \[\inf \{r\geqslant 0\mid A\subseteq N_r(B) \text{ and } B\subseteq N_r(A)\}.\]
A map $f:X\rightarrow Y$ between topological spaces is said to be \emph{proper} if inverse images of compact sets are compact.

\begin{defn}
A map $f:X\rightarrow Y$ is said to be a \emph{coarse embedding} if there exist proper non-decreasing functions $\eta$, $\phi:\bbR_{\geqslant 0}\rightarrow\bbR_{\geqslant 0}$ such that \[\eta(d_X(x,x'))\leqslant d_Y(f(x),f(x'))\leqslant \phi(d_X(x,x'))\] We say  such an $f$ is an $(\eta,\phi)$-coarse embedding, and we say that $\eta$ and $\phi$ are the \emph{distortion functions}. If there exists an $r\geqslant 0$ such that $N_r(f(X))=Y,$ we say that $f$ is a \emph{coarse equivalence}.  
\end{defn}
\begin{rem}\label{rem:proper inv}
For a proper non-decreasing function $\phi:\mathbb{R}_{\geq 0}\rightarrow \mathbb{R}_{\geq 0}$, there is another proper non-decreasing function $\tilde {\phi}:\mathbb{R}_{\geq 0}\rightarrow \mathbb{R}_{\geq 0}$ by $\tilde{\phi}(R):=\sup(\phi^{-1}([0,R]))$.  This is a sort of inverse to $\phi$ in the following  sense: if $\phi(S)\leq R$, then $S\leq \tilde \phi(R)$, and if $R<\phi(S)$, then $\tilde \phi(R)\leq S$.
\end{rem}

 Being coarsely equivalent defines an equivalence relation on the class of metric spaces. Every discrete countable group admits a proper left-invariant metric, and this metric is unique up to coarse equivalence. 

A \emph{quasi-isometry} is a special kind of coarse equivalence in which the distortion functions can be  taken to be affine.  It is well known that if  $G$ is a finitely generated group, then any two word metrics on $G$ with respect to finite generating sets are quasi-isometric. If $G$ and $H$ are finitely generated groups equipped with the word metric with respect to finite generating sets, then the inclusion $H\rightarrow G$ is a coarse embedding.

Given a metric space $X$ and a parameter $r\geqslant 0$, the \emph{Rips complex} $P_r(X)$ is defined to be the simplicial complex with vertex set $X$, where $\{x_0,\dots, x_n\}$ spans a simplex if $d(x_i,x_j)\leqslant r$ for all $0\leqslant i,j\leqslant n$. 
A metric space  $X$ is said to have \emph{bounded geometry} if for all $r\geqslant 0$, there is an $M_r\geqslant 0$ such that $\lvert N_r(x)\rvert \leqslant M_r$ for every $x\in X$. If $X$ is a bounded geometry metric space, then every Rips complex $P_r(X)$ is locally finite.  We say that a 
metric space is \emph{quasi-geodesic} if it is quasi-isometric to a geodesic metric space. We remark that bounded geometry metric spaces containing more than one point will not be geodesic metric spaces, but may be quasi-geodesic.

\begin{exmp}
The Cayley graph of a finitely generated group is a geodesic metric space but is not a bounded geometry metric space. However, a finitely generated group equipped with the word metric is a quasi-geodesic, bounded geometry metric space.
\end{exmp}

\begin{defn}
We say that a metric space $X$ is \emph{coarsely uniformly $n$-acyclic over $R$} if for every $i\geqslant 0$, $r\geqslant 0$ and $x\in X$, there exist $j=j(i)\geqslant i$ and $s=s(r,i)\geqslant r$ such that the maps \[\widetilde H_k(P_i(N_r(x));R) \rightarrow \widetilde H_k(P_{j}(N_s(x));R),\] induced by inclusion, are zero for $k\leqslant n$. We call $j:\bbR_{\geqslant 0}\rightarrow\bbR_{\geqslant 0}$ and $s:\bbR_{\geqslant 0}\times \bbR_{\geqslant 0}\rightarrow\bbR_{\geqslant 0}$  the \emph{coarse acyclicity functions}. We say that $X$ is \emph{coarsely uniformly acyclic over $R$} if it is coarsely uniformly $n$-acyclic over $R$ for every $n$.
\end{defn}

The notion of coarse uniformly acyclicity is a geometric generalisation of finiteness properties of groups; see Proposition \ref{prop:geomcharoffiniteness}. A crucial step in the proof of Theorem \ref{thm:mainintro} is Proposition \ref{prop:acyclicity of quotient space}, which gives sufficient conditions for the quotient space $G/H$ to be coarsely uniformly acyclic.

\begin{rem}\label{rem:simplifyuniformity}
If $X$ is a bounded geometry metric space admitting a cocompact group action,  coarse uniform $n$-acyclicity is equivalent to the a priori weaker condition that for every $i\geqslant 0$, there exists a $j\geqslant i$  such that the maps \[\widetilde H_k(P_i(X);R) \rightarrow \widetilde H_k(P_{j}(X);R)\] are zero  for $k\leq n$.
\end{rem}

Coarse uniform acyclicity is invariant under coarse equivalences, and hence  invariant under quasi-isometries.
We recall the following characterisation of coarse uniform $0$-acyclicity:
\begin{prop}[{\cite[Proposition 2.18]{margolis2018quasi}}]\label{prop:unif0-acycl}
Let $X$  be a bounded geometry metric space. The following are equivalent:
\begin{enumerate}
\item $X$ is coarsely uniformly $0$-acyclic over $R$;
\item $X$ is coarsely equivalent to a connected locally finite graph equipped with the induced path metric in which edges have length one.
\end{enumerate}
\end{prop}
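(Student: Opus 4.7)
The plan is to establish the two implications of the proposition separately, with direction $(2) \Rightarrow (1)$ being a direct verification and $(1) \Rightarrow (2)$ a short explicit construction.

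For $(2) \Rightarrow (1)$, since coarse uniform $0$-acyclicity is invariant under coarse equivalences, I may assume $X = \Gamma$ is itself a connected locally finite graph with the edge-length-one path metric. For any $x \in \Gamma$ and $r \geq 0$, any two vertices $y, z \in N_r(x)$ are joined by the concatenation of geodesics $y \to x \to z$; this concatenated path has length at most $2r$ and is contained in $N_{2r}(x)$. Hence the induced subgraph on $N_{2r}(x)$ is connected, and so is the simplicial complex $P_j(N_{2r}(x))$ for any $j \geq 1$. Taking $j(i) = \max(i,1)$ and $s(r,i) = 2r$, the target of the inclusion in the definition has vanishing reduced $0$-homology, so the condition holds.

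For $(1) \Rightarrow (2)$, I would apply coarse uniform $0$-acyclicity with $i = 0$ to extract an integer $J := j(0)$ and a non-decreasing function $S(r) := s(r,0)$. Define $\Gamma$ to be the graph with vertex set $X$ and an edge joining each pair of distinct points at $d_X$-distance at most $J$, equipped with the induced path metric. Local finiteness is immediate from bounded geometry, since the degree of each vertex is bounded by $M_J$. For connectedness, fix $x, y \in X$ and set $r := d_X(x, y)$; since $[y] - [x] \in \widetilde H_0(P_0(N_r(x)))$ maps to zero in $\widetilde H_0(P_J(N_{S(r)}(x)))$, the points $x$ and $y$ lie in a common component of the $1$-skeleton of this Rips complex, which is a subgraph of $\Gamma$. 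The same argument bounds the length of a connecting path by $|N_{S(r)}(x)| \leq M_{S(r)}$, yielding the distortion estimate $d_\Gamma(x,y) \leq M_{S(d_X(x,y))}$; the reverse estimate $d_X(x,y) \leq J \cdot d_\Gamma(x,y)$ is immediate since each $\Gamma$-edge has $d_X$-length at most $J$. Hence the identity $X \to \Gamma$ is a coarse equivalence.

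No step is a substantial obstacle; the main conceptual point in the second direction is that the scale $J = j(0)$ provided by coarse uniform $0$-acyclicity is precisely the right scale at which to form a Rips-type graph that is simultaneously connected and coarsely equivalent to $X$.
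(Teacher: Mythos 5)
Your proof is correct. Note that the paper cites this result to \cite[Proposition 2.18]{margolis2018quasi} rather than proving it, so there is no in-paper argument to compare against; your argument is the natural and standard one (Rips graph at the scale $J=j(0)$ furnished by coarse uniform $0$-acyclicity for the forward direction, and direct verification via discretised paths for the reverse). A few routine technicalities are worth flagging: replace $j(0)$ by $\lceil j(0)\rceil$ so $J$ is an integer, and observe that $J\geq 1$ unless $X$ is a singleton (at scale $0$ the Rips complex is discrete, so the inclusion-induced map on $\widetilde H_0$ over nested sets is injective and can only vanish when $N_r(x)$ is a point); also replace $r\mapsto s(r,0)$ and $r\mapsto M_r$ by their running maxima and take $\phi(t)=\max\bigl(t,M_{S(t)}\bigr)$ to ensure the distortion functions are proper and non-decreasing as the definition requires.
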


Suppose  $\Gamma$ is a locally finite graph. If $K$ is a finite subgraph of $\Gamma$, let $c(K)$ denote the number of unbounded components of $\Gamma\setminus K$.  The number of ends of $\Gamma$ is defined to be $\sup\{c(K)\mid \text{$K$ is a finite subgraph}\}\in \bbN\cup \{\infty\}$.
\begin{defn} 
Suppose $X$ is a coarsely uniformly $0$-acyclic, bounded geometry metric space. The \emph{number of ends} of $X$, denoted $e(X)$, is defined to be the number of ends of a locally finite graph that is coarsely equivalent to $X$.
\end{defn}

\subsection{Proper chain complexes}

\begin{defn}
A \emph{based free $R$-module} is a pair $(\bM,\Sigma)$ consisting of a free $R$-module $\bM$ with a distinguished basis $\Sigma$ called the \emph{standard basis}. We define the \emph{support} of $m=\sum_{\sigma\in \Sigma}n_\sigma\sigma$ to be $\supp(m)\coloneqq \{\sigma\in \Sigma\mid n_\sigma\neq 0\}$. 
A \emph{proper map} between based free modules $(\bM,\Sigma)$ and $(\bN,\Lambda)$ is a module homomorphism $f:\bM\rightarrow \bN$ such that  for every $\lambda\in \Lambda$,
\[\#\{\sigma\in \Sigma\mid \lambda\in \supp(f(\sigma))\}<\infty.\]  
A \emph{proper chain complex over $R$} consists of the pair $(\bC_\bullet,\Sigma_\bullet)$, where  $\bC_\bullet$ is a chain complex of $R$-modules such that
\begin{enumerate}
\item $\Sigma_i$ is a basis of $\bC_i$ for every $i$;
\item every boundary map $\partial_i: (\bC_i,\Sigma_i)\rightarrow (\bC_{i-1},\Sigma_{i-1})$ is a proper map between based free modules.
\end{enumerate}  
\end{defn}

To simplify notation,  we frequently say that $f:\bM\rightarrow \bN$ is a proper map or that $\bC_\bullet$ is a proper chain complex, where the choice of standard bases is implicit.
Our motivating example is the cellular chain complex associated to a locally finite CW complex $X$. Each $\bC_i(X)$ is a free module with a standard basis   corresponding to the collection of $i$-cells of $X$. As $X$ is locally finite, each $(i-1)$-cell is contained in the boundary of finitely many $i$-cells, and so the boundary maps $\partial_i: \bC_i(X)\rightarrow \bC_{i-1}(X)$ are proper.

\begin{defn}
A chain map $f_\bullet : \bC_\bullet\rightarrow \bD_\bullet$ between proper chain complexes is said to be \emph{proper} if each map $f_i:\bC_i\rightarrow \bD_i$ is a proper map between based free modules.
 Similarly, a chain homotopy $h_\bullet:\bC_\bullet\rightarrow \bD_{\bullet+1}$ is proper if each map $h_i:\bC_i\rightarrow \bD_{i+1}$ is proper. 
\end{defn}

 Recall that a continuous map between topological spaces is said to be proper if the inverse images of compact sets are compact. If $X$ and $Y$ are locally finite CW complexes and $f:X\rightarrow Y$ is a proper cellular map, then the induced map $f_\bullet:\bC_\bullet(X)\rightarrow \bC_\bullet(Y)$ is a proper chain map. 

If $(\bM,\Sigma)$ is a based free $R$-module, then the \emph{dual module} is $\Hom(\bM,R)$. The \emph{support} of $\alpha\in \Hom(\bM,R)$ is defined to be $\supp(\alpha)=\{\sigma\in \Sigma\mid \alpha(\sigma)\neq 0\}$.
 
\begin{lem}\label{lem:prop map preserves finite support}
Suppose that $(\bM,\Sigma)$ and $(\bN,\Lambda)$ are based free $R$-modules and that $f:\bM\rightarrow \bN$ is proper.  If $\alpha\in \Hom(\bN,R)$ has finite support, then so does $f^*(\alpha)\coloneqq \alpha\circ f\in \Hom(\bM,R)$.
\end{lem}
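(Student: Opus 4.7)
The proof I have in mind is essentially a direct unpacking of the definitions together with a finite-union-of-finite-sets argument. There is no real obstacle; the only thing to be slightly careful about is that one cannot simply equate $\supp(f^{*}(\alpha))$ with $\{\sigma:\supp(f(\sigma))\cap\supp(\alpha)\neq\emptyset\}$, because cancellation among the finitely many terms of $\alpha(f(\sigma))$ could annihilate a non-trivially supported element. Fortunately, one only needs a containment in one direction.

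The plan is as follows. First I would fix $\sigma\in\Sigma$ and write $f(\sigma)=\sum_{\lambda\in\Lambda} n_{\lambda}^{\sigma}\lambda$, where only finitely many $n_{\lambda}^{\sigma}\in R$ are non-zero (this is automatic since $f(\sigma)\in\mathbf{N}$). Applying $\alpha$ and using that $\alpha$ is $R$-linear gives
\[
f^{*}(\alpha)(\sigma)=\alpha(f(\sigma))=\sum_{\lambda\in\Lambda}n_{\lambda}^{\sigma}\,\alpha(\lambda)=\sum_{\lambda\in\supp(\alpha)}n_{\lambda}^{\sigma}\,\alpha(\lambda),
\]
since $\alpha(\lambda)=0$ for $\lambda\notin\supp(\alpha)$. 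In particular, if $f^{*}(\alpha)(\sigma)\neq 0$, then at least one summand on the right must be non-zero, and for this we need some $\lambda\in\supp(\alpha)$ with $n_{\lambda}^{\sigma}\neq 0$, i.e., $\lambda\in\supp(f(\sigma))$. Hence
\[
\supp(f^{*}(\alpha))\;\subseteq\;\bigl\{\sigma\in\Sigma\;:\;\supp(f(\sigma))\cap\supp(\alpha)\neq\emptyset\bigr\}\;=\;\bigcup_{\lambda\in\supp(\alpha)}\bigl\{\sigma\in\Sigma\;:\;\lambda\in\supp(f(\sigma))\bigr\}.
\]

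For the second step, I would invoke the two finiteness hypotheses. By assumption $\supp(\alpha)$ is finite, so the displayed union has finitely many terms. By properness of $f$, each set $\{\sigma\in\Sigma:\lambda\in\supp(f(\sigma))\}$ is itself finite. A finite union of finite sets being finite, we conclude $\supp(f^{*}(\alpha))$ is finite, which is what was required.
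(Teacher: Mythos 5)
Your proof is correct and takes exactly the same approach as the paper: establish the containment $\supp(\alpha\circ f)\subseteq \bigcup_{\lambda\in\supp(\alpha)}\{\sigma\in\Sigma : \lambda\in\supp(f(\sigma))\}$, then apply finiteness of $\supp(\alpha)$ together with properness of $f$. The paper simply asserts the containment while you unpack it explicitly (and rightly observe that cancellation can make it strict), but the argument is identical.
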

\begin{proof}
We observe that \[\supp(\alpha\circ f)\subseteq \bigcup_{\lambda\in \supp(\alpha)}\{\sigma\in \Sigma\mid \lambda\in \supp(f(\sigma))\}.\] Since $f$ is proper and $\alpha$ has  finite support,  $\alpha\circ f$ also has finite support.
\end{proof}

Given a proper chain complex $(\bC_\bullet,\Sigma_\bullet)$, we can dualize to obtain the cochain complex $\bC^\bullet=\Hom_R(\bC_\bullet,R)$ with coboundary maps $\delta^\bullet$.  For each $i\in \bbZ$, let $\bC^i_c=\cHom_R(\bC_i,R)\subset \Hom_R(\bC_i,R)$ denote the set of all cochains with finite support. 
\begin{cor}
Let $(\bC_\bullet,\Sigma_\bullet)$ be a proper chain complex. Then $\bC^\bullet_c$ is a sub-cochain complex of $\bC^\bullet$.
\end{cor}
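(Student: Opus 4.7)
The plan is to observe that this is essentially immediate from Lemma \ref{lem:prop map preserves finite support}. To show that $\bC^\bullet_c$ is a sub-cochain complex of $\bC^\bullet$, it suffices to verify two things: first, that each $\bC^i_c$ is a submodule of $\bC^i$, and second, that the coboundary maps $\delta^i : \bC^i \to \bC^{i+1}$ restrict to maps $\bC^i_c \to \bC^{i+1}_c$.

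The first point is routine: the set of finitely supported elements of $\Hom_R(\bC_i,R)$ is closed under addition (the support of a sum is contained in the union of the supports) and under scalar multiplication, hence forms a submodule.

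For the second and key point, I would unwind the definition: for $\alpha \in \bC^i$, the coboundary is given by $\delta^i(\alpha) = \alpha \circ \partial_{i+1}$, where $\partial_{i+1} : \bC_{i+1} \to \bC_i$ is the boundary map of the proper chain complex. Since $(\bC_\bullet, \Sigma_\bullet)$ is a proper chain complex, $\partial_{i+1}$ is by definition a proper map between based free modules. If $\alpha \in \bC^i_c$, i.e.\ $\alpha$ has finite support, then Lemma \ref{lem:prop map preserves finite support} applied to $\partial_{i+1}$ immediately gives that $\delta^i(\alpha) = \alpha \circ \partial_{i+1}$ has finite support, so $\delta^i(\alpha) \in \bC^{i+1}_c$.

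There is no real obstacle here; the whole point of having introduced Lemma \ref{lem:prop map preserves finite support} just above is to make this corollary a one-line verification. The only thing worth being careful about is indexing conventions for the coboundary, but this is purely cosmetic.
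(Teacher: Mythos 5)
Your proof is correct and follows the paper's own argument exactly: apply Lemma \ref{lem:prop map preserves finite support} to the proper boundary map $\partial_{i+1}$ to see that $\delta^i\alpha = \alpha\circ\partial_{i+1}$ has finite support whenever $\alpha$ does. The only addition is your explicit check that finitely supported cochains form a submodule, which the paper leaves implicit.
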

\begin{proof}
If $\alpha\in \bC^i_c$, then $\delta^{i}\alpha\coloneqq \alpha\circ \partial_{i+1}$ has finite support by Lemma \ref{lem:prop map preserves finite support} and properness of  $\partial_{i+1}:\bC_{i+1}\rightarrow \bC_i$.
\end{proof}

Let $H^k_c(\bC_\bullet)$ denote the $k^\text{th}$ cohomology of this cochain complex. We call this the \emph{cohomology with compact supports} of $\bC_\bullet$. When $X$ is a locally finite CW complex, $H^k_c(\bC_\bullet(X))$ is (naturally isomorphic to) the standard  notion of cohomology with compact supports of $X$. We deduce the following by applying Lemma \ref{lem:prop map preserves finite support}.
\begin{cor}
A proper chain map $f_\bullet:\bC_\bullet\rightarrow \bD_\bullet$ between proper chain complexes induces maps $f_c^*:H^*_c(\bD_\bullet)\rightarrow H^*_c(\bC_\bullet)$ in compactly supported cohomology. Moreover, if two proper chain maps $f_\bullet,g_\bullet:\bC_\bullet\rightarrow \bD_\bullet$ are properly chain homotopic, then $f_c^*=g_c^*$.
\end{cor}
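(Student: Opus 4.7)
The plan is to dualise everything and track finite supports using Lemma \ref{lem:prop map preserves finite support}. Given a proper chain map $f_\bullet:\bC_\bullet\to \bD_\bullet$, I would first define, in each degree, the algebraic dual $f^i:\Hom_R(\bD_i,R)\to \Hom_R(\bC_i,R)$ by $f^i(\alpha)=\alpha\circ f_i$. Since each $f_i$ is a proper map between based free modules, Lemma \ref{lem:prop map preserves finite support} gives that $f^i$ carries $\bD^i_c$ into $\bC^i_c$. Thus one obtains a linear map $f_c^i:\bD^i_c\to \bC^i_c$ in each degree.

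Next I would verify that $f_c^\bullet$ is a cochain map, i.e.\ that $\delta^i\circ f_c^i = f_c^{i+1}\circ\delta^i$ on compactly supported cochains. This is the standard duality computation: for $\alpha\in \bD^i_c$, both sides send $\alpha$ to $\alpha\circ f_i\circ \partial_{i+1}^{\bC}=\alpha\circ \partial_{i+1}^{\bD}\circ f_{i+1}$, which holds because $f_\bullet$ is a chain map. Consequently $f_c^\bullet$ induces the required map $f_c^*:H^*_c(\bD_\bullet)\to H^*_c(\bC_\bullet)$.

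For the second assertion, suppose $h_\bullet:\bC_\bullet\to \bD_{\bullet+1}$ is a proper chain homotopy with $\partial_{i+1}^{\bD}h_i+h_{i-1}\partial_i^{\bC}=g_i-f_i$. Each $h_i$ is a proper map between based free modules, so another application of Lemma \ref{lem:prop map preserves finite support} shows that the dual $h^{i+1}:\alpha\mapsto \alpha\circ h_i$ restricts to a map $h_c^{i+1}:\bD^{i+1}_c\to \bC^i_c$. Precomposing the chain homotopy identity with a compactly supported cochain $\alpha\in \bD^i_c$ gives
\[
\alpha\circ\partial_{i+1}^{\bD}\circ h_i + \alpha\circ h_{i-1}\circ \partial_i^{\bC} = \alpha\circ g_i - \alpha\circ f_i,
\]
which rearranges to $h_c^{i+1}\delta^i\alpha + \delta^{i-1}h_c^i\alpha = g_c^i\alpha-f_c^i\alpha$. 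Hence $h_c^\bullet$ is a cochain homotopy between $f_c^\bullet$ and $g_c^\bullet$, so $f_c^*=g_c^*$.

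The only potential obstacle is the preservation of finite supports, but this is exactly the content of Lemma \ref{lem:prop map preserves finite support}; everything else is the standard formal manipulation of dualising chain maps and chain homotopies, so no substantive difficulty arises.
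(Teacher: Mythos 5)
Your proof is correct and follows exactly the route the paper intends: the paper itself offers no written proof beyond the remark that the corollary follows from Lemma \ref{lem:prop map preserves finite support}, and your write-up simply spells out the standard dualization bookkeeping while using that lemma (applied to each $f_i$ and to each $h_i$) to keep the dual maps within the compactly supported subcomplex. Nothing to change.
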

 Recall that the tensor product $\bC_\bullet\otimes \bD_\bullet$ of chain complexes is defined to be $(\bC_\bullet\otimes \bD_\bullet)_n=\bigoplus_{i+j=n} \bC_i\otimes \bD_j$ with boundary map \[\partial(\sigma\otimes \lambda)=\partial\sigma\otimes \lambda+ (-1)^i\sigma\otimes \partial\lambda\] when $\sigma\in \bC_i$ and $\lambda\in \bD_j$.  The tensor product of proper chain complexes is also a proper chain complex. Indeed, suppose that $(\bC_\bullet,\Sigma_\bullet)$ and $(\bD_\bullet,\Lambda_\bullet)$ are proper chain complexes. We define the standard basis of  $(\bC_\bullet\otimes \bD_\bullet)_n=\bigoplus_{i+j=n} \bC_i\otimes \bD_j$ to be \[\{\sigma\otimes \lambda\mid \sigma\in \Sigma_i, \lambda\in \Lambda_j, i+j=n\},\] and  $\bC_\bullet\otimes \bD_\bullet$ is a proper chain complex with respect to these bases. 

We now restrict to a special class of proper chain complexes called metric complexes, defined by Kapovich and Kleiner in an appendix of \cite{kapovich2005coarse}. Let $X$ be a bounded geometry metric space. A \emph{free module over $X$} consists of the tuple $(\bM,\Sigma, p)$, where $(\bM,\Sigma)$ is a based free $R$-module  and $p$ is a map $\Sigma\rightarrow X$. We call $X$ the \emph{control space} and say that $(\bM,\Sigma,p)$ has \emph{finite type} if $\sup_{x\in X}\lvert p^{-1}(x)\rvert <\infty$. We define $\supp_X(m)\coloneqq p(\supp(m))$ for every $m\in \bM$.

\begin{defn}
Let $X$ and $X'$ be bounded geometry metric spaces and let $(\bM,\Sigma,p)$ and $(\bN,\Lambda,q)$ be free modules over $X$ and $X'$ respectively. We say that a module homomorphism $\hat f:\bM\rightarrow \bN$ has \emph{finite displacement  over a function $f:X\rightarrow X'$} if there exists an $r\geqslant 0$ such that for every $\sigma\in \Sigma$, \[\supp_{X'}(\hat f(\sigma))\subseteq N_r(f(p(\sigma))).\] If the above holds, we say that \emph{$\hat f$ has displacement at most $r$ over $f$}.
\end{defn}

Maps between free modules over a metric space are often proper:

\begin{prop}
Let $X$ and $X'$ be bounded geometry metric spaces. Suppose that $f:X\rightarrow X'$ is a coarse embedding and that $(\bM,\Sigma,p)$ and $(\bN,\Lambda,q)$ are finite type free modules over $X$ and $X'$ respectively. If $\hat f:\bM\rightarrow \bN$ has finite displacement over $f$, then $\hat f$ is a proper map between based free modules.
\end{prop}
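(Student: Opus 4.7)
The plan is to fix an arbitrary $\lambda\in \Lambda$ and bound the cardinality of the set
\[ S_\lambda \coloneqq \{\sigma\in \Sigma\mid \lambda\in \supp(\hat f(\sigma))\} \]
by showing that all such $\sigma$ have $p(\sigma)$ lying in a single bounded region of $X$. Combined with finite type of $(\bM,\Sigma,p)$ and bounded geometry of $X$, this will force $S_\lambda$ to be finite, which is precisely what it means for $\hat f$ to be a proper map between based free modules.

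First I would unwind the displacement hypothesis: if $\lambda\in \supp(\hat f(\sigma))$, then $q(\lambda)\in \supp_{X'}(\hat f(\sigma))\subseteq N_r(f(p(\sigma)))$, so $d_{X'}(f(p(\sigma)),q(\lambda))\leqslant r$. This places all points $f(p(\sigma))$ (for $\sigma\in S_\lambda$) inside the $r$-ball around $q(\lambda)$ in $X'$. Fixing once and for all a basepoint $x_0\in X$, the triangle inequality yields
\[ d_{X'}(f(p(\sigma)),f(x_0))\leqslant r+d_{X'}(q(\lambda),f(x_0))\eqqcolon R, \]
a constant depending only on $\lambda$.

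Next I would transfer this $X'$-bound back to $X$ using the coarse embedding hypothesis. Let $\eta$ be the lower distortion function of $f$, so $\eta(d_X(p(\sigma),x_0))\leqslant d_{X'}(f(p(\sigma)),f(x_0))\leqslant R$. Applying the inverse $\tilde\eta$ from Remark \ref{rem:proper inv} gives $d_X(p(\sigma),x_0)\leqslant \tilde\eta(R)$, so that $p(S_\lambda)\subseteq N_{\tilde\eta(R)}(x_0)$. Since $X$ has bounded geometry, this metric ball is finite, and since $(\bM,\Sigma,p)$ has finite type, each fibre $p^{-1}(x)$ is bounded by a uniform constant; multiplying these two bounds shows $|S_\lambda|<\infty$.

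There is no real obstacle here; the proof is essentially a chase through the definitions, with the one technical ingredient being the inverse function $\tilde\eta$ of Remark \ref{rem:proper inv}, which converts a coarse embedding's lower distortion bound into the quantitative statement that preimages of bounded sets are bounded. The finite type and bounded geometry hypotheses are both essential and both used in the final step.
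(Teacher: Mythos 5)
Your proof is correct and follows essentially the same route as the paper: unwind the finite-displacement hypothesis to place all the points $f(p(\sigma))$ (for $\sigma$ with $\lambda\in\supp(\hat f(\sigma))$) in a bounded region of $X'$, pull back via $\tilde\eta$ to a bounded region of $X$, then invoke bounded geometry and finite type. The only (cosmetic) difference is that the paper takes a member $\sigma_0$ of the set $A_\lambda$ itself as the reference point, getting the $\lambda$-independent bound $d_X(p(\sigma),p(\sigma_0))\leqslant\tilde\eta(2r)$, whereas you use a fixed basepoint $x_0\in X$, which introduces the $\lambda$-dependent constant $R$; this is harmless since finiteness of each $S_\lambda$ is all that is needed.
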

\begin{proof}
Suppose that $\hat f$ has displacement at most $r$ over $f$ and that $f$ is an $(\eta,\phi)$-coarse embedding. We need to show $A_\lambda\coloneqq \{\sigma\in \Sigma\mid \lambda\in \supp(\hat f(\sigma))\}$ is finite for every $\lambda\in \Lambda$. There is nothing to show if $A_\lambda=\emptyset$, so we suppose $\sigma_0\in A_\lambda$.  If $\sigma\in A_\lambda$, then $q(\lambda)\in N_r(f(p(\sigma)))\cap N_r(f(p(\sigma_0)))$, so  $d_{X'}(f(p(\sigma)),f(p(\sigma_0)))\leqslant 2r$. Thus $d_X(p(\sigma),p(\sigma_0))\leqslant \tilde \eta(2r)$, with $\tilde\eta$ as in Remark \ref{rem:proper inv}. There are only finitely many such $\sigma\in \Sigma$, since $X$ has bounded geometry and $\bM$ has finite type.
\end{proof}
Let $X$ be a bounded geometry metric space. A \emph{metric complex over $R$} is a tuple $(X,\bC_\bullet, \Sigma_\bullet, p_\bullet)$ such that:
\begin{enumerate}
\item $\bC_\bullet$ is a non-negative chain complex of $R$-modules, i.e. $\bC_i=0$ for $i<0$;
\item each $(\bC_i,\Sigma_i,p_i)$ is a finite type free $R$-module over $X$;
\item each boundary map $\partial_i:\bC_i\rightarrow \bC_{i-1}$ has finite displacement over the identity map on $X$;
\item the composition $\varepsilon\circ \partial_1$ is zero, where $\varepsilon:C_0\rightarrow R$ is the augmentation map given by $\sigma\mapsto 1_R$ for each $\sigma\in \Sigma_0$;
\item the map $p_0:\Sigma_0\rightarrow X$ is onto.
\end{enumerate} 
 We refer the reader to \cite{kapovich2005coarse} and \cite{margolis2018quasi} for more details on the theory of metric complexes. When unambiguous, we denote the metric complex $(X,\bC_\bullet, \Sigma_\bullet, p_\bullet)$  simply by $(X,\bC_\bullet)$, or even by $\bC_\bullet$. The metric space $X$ is said to be  \emph{control space} of $(X,\bC_\bullet)$. We say that $(X,\bC_\bullet)$ is \emph{$n$-dimensional} if $\bC_i=0$ for $i>n$. 

We say a metric complex $(A,\bC'_\bullet,\Sigma'_\bullet, p'_\bullet)$ is a  \emph{subcomplex}  of  $(X,\bC_\bullet, \Sigma_\bullet, p_\bullet)$ if $A\subseteq X$,  $\bC'_\bullet$ is  a subchain complex of $\bC_\bullet$ and for each $i$,  $\Sigma'_i\subseteq \Sigma_i$ and $p'_i=p_i|_{\Sigma'_i}$. 
Given a subset $A\subseteq X$, we define $(A,\bC_\bullet[A])$ to be the largest subcomplex of $(X,\bC_\bullet)$ with control space $A$.  The \emph{$n$-skeleton} of $(A,\bC_\bullet[A])$ is the metric  complex $(A,\bC_\bullet[A]_n)$, where $\bC_i[A]_n=\bC_i[A]$ when $i\leq n$ and $\bC_i[A]_n=0$ when $i>n$.

\begin{defn}
We say $(X,\bC_\bullet)$ is \emph{uniformly $n$-acyclic} if for all $r$, there exists a $\mu(r)\geqslant r$ such that for all $x\in X$ and $k\leqslant n$, the map \[\widetilde H_k(\bC_\bullet[N_r(x)])\rightarrow \widetilde H_k(\bC_\bullet[N_{\mu(r)}(x)]),\] induced by inclusion, is zero. We say that $\mu:\bbR_{\geqslant 0}\rightarrow \bbR_{\geqslant 0}$ is the \emph{acyclicity function}.
\end{defn}

\begin{prop}[{\cite{kapovich2005coarse},\cite[Proposition 3.20]{margolis2018quasi}}]
A bounded geometry metric space $X$ is coarsely uniformly $(n-1)$-acyclic over $R$  if and only if it is the control space of an $n$-dimensional uniformly $(n-1)$-acyclic metric complex $(X,\bC_\bullet)$ over $R$. Moreover, the acyclicity function  and the displacement of the boundary maps of $(X,\bC_\bullet)$ can be bounded as a function of the coarse acyclicity functions of $X$.
\end{prop}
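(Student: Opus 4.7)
I would prove the equivalence by treating the two directions separately.

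For the backward direction, I would build the metric complex inductively. Set $\bC_0 = R[X]$ with standard basis $X$ and control map $p_0 = \mathrm{id}_X$. Having built $\bC_0, \ldots, \bC_{k-1}$ as a finite-type uniformly $(k-2)$-acyclic metric complex, with boundary displacement and acyclicity functions controlled by the coarse acyclicity functions of $X$, I would construct $\bC_k$ so that its $k$-cells kill the $(k-1)$-cycles of $\bC_{k-1}$ uniformly. For each $x \in X$, I would choose a finite generating set for the $(k-1)$-cycles of $\bC_{k-1}[N_R(x)]$ for a uniform radius $R$ (controlled by the coarse acyclicity functions), and for each such cycle produce a bounded $k$-chain filling it via coarse $(k-1)$-acyclicity applied to a chain-level comparison with the Rips complex. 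These fillings become the basis elements of $\bC_k$ controlled at $x$.

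The principal obstacle is achieving finite type for $\bC_k$ while keeping $\partial_k$ of bounded displacement: naively there are uncountably many $(k-1)$-cycles supported near each point. The key observation is that by bounded geometry of $X$ and finite type of $\bC_{k-1}$, the restriction $\bC_{k-1}[N_R(x)]$ is a finitely generated $R$-module, hence so is its submodule of cycles; a finite generating set exists at each point, and organising the choice of generators and fillings uniformly across $x \in X$ gives finite type and bounded displacement with the quantitative control claimed.

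For the forward direction, suppose $(X, \bC_\bullet)$ is $n$-dimensional and uniformly $(n-1)$-acyclic. I would transfer this property to the Rips complex of $X$ via a standard bounded acyclic-carrier argument. Using the uniform acyclicity of $\bC_\bullet$, one inductively builds a proper chain map $\phi: \bC_\bullet(P_i(X))_{\leqslant n} \to \bC_\bullet$ of bounded displacement by sending each vertex of $P_i(X)$ to a chosen basis element of $\bC_0$ lying over it, and extending dimension by dimension by filling the image of each simplex boundary. A parallel construction (handling the base case in low dimensions separately to avoid circularity) yields $\psi: \bC_\bullet \to \bC_\bullet(P_j(X))_{\leqslant n}$ of bounded displacement together with a bounded chain homotopy $\psi \circ \phi \simeq \mathrm{id}$ on $\bC_\bullet(P_i(X))_{\leqslant n-1}$. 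Given an $(n-1)$-cycle $c$ in $P_i(N_r(x))$, its image $\phi(c)$ has bounded support in $\bC_{n-1}$ and bounds a chain $b \in \bC_n$ in a bounded neighbourhood by uniform acyclicity; applying $\psi$ to $b$ and correcting by the chain homotopy then produces a filling of $c$ in $P_j(N_s(x))$ for $j$ and $s$ controlled quantitatively by the displacement and acyclicity data of $\bC_\bullet$.
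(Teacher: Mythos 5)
Your proposal is essentially correct and reproduces the standard argument of Kapovich--Kleiner and of Proposition~3.20 of the author's earlier paper, which the present paper cites without reproving: an inductive cell-adding construction (with bounded geometry and the PID hypothesis ensuring finite type) in one direction, and a bounded-displacement acyclic-carrier comparison between $\bC_\bullet$ and Rips complexes in the other. The phrase ``chain-level comparison with the Rips complex'' is where the real technical content of the cited proofs lives --- one must construct and maintain a bounded chain-homotopy equivalence between the partially built complex $\bC_\bullet[\cdot]_{k}$ and a Rips complex at an appropriate scale so that coarse acyclicity of $X$ can be applied to the abstract cycles arising at each inductive stage --- but you correctly identify that this comparison is the necessary device, so the gap is one of compression rather than of strategy.
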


We recall the notion of \emph{coarse cohomology} used in \cite{margolis2018quasi} (see also \cite{roe1993coarse} and \cite{kapovich2005coarse}).
\begin{defn}
Fix $n>0$. Suppose  $X$ is a bounded geometry metric space that is coarsely uniformly $(n-1)$-acyclic over $R$. Let $(X,\bC_\bullet)$ be an $n$-dimensional, uniformly $(n-1)$-acyclic metric complex over $R$. For $k<n$, we define $\coarse^k(X;R)\coloneqq H^k_c(\bC_\bullet)$.
\end{defn}
When $X$ is coarsely uniformly $(n-1)$-acyclic, it is possible to define  $\coarse ^n(X;R)$ using  an idea originally due to Geoghegan--Mihalik \cite{geoghegan1986note}. This will not be needed here, but we refer the reader to \cite{margolis2018quasi} for more details. The preceding definition of coarse cohomology may differ slightly from the coarse cohomology defined in \cite{margolis2018quasi} in dimension zero. The former should be thought of as unreduced cohomology, while the latter is reduced (compare Proposition \ref{prop:cohomatinf} below to Remark 3.24 in \cite{margolis2018quasi}).

In \cite[Proposition 3.26]{margolis2018quasi}, it is shown that coarse equivalences preserve coarse cohomology. In particular, $\coarse^k(X;R)$ is independent of the  choice of metric complex $(X,\bC_\bullet)$. Coarse cohomology contains information about the topology at infinity of the space $X$ (see \cite[\S 3.5]{margolis2018quasi}). In particular, coarse cohomology in dimensions 0 and 1 can be easily characterised: 
\begin{prop}\label{prop:cohomatinf}
If $X$ is a bounded geometry, coarsely uniformly $0$-acyclic metric space, then
\begin{enumerate}
\item\label{item:cohomatinf0}
$\coarse^0(X;R)\cong \begin{cases}
	R & \text{if $X$ is bounded}\\
	0 & \text{otherwise.}
\end{cases}$ 
\item If $X$ is unbounded and coarsely uniformly $1$-acyclic over $R$, then $\coarse^1(X;R)$ is a free $R$-module with  $\text{rank} (\coarse^1(X;R))=e(X)-1$. \label{item:cohomatinf1}
\end{enumerate}
\end{prop}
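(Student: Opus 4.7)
The plan is to replace $X$, up to coarse equivalence, by a connected locally finite graph $\Gamma$ via Proposition~\ref{prop:unif0-acycl}, and to work with metric complexes over $\Gamma$; this is legitimate since coarse cohomology is a coarse invariant. For part~(\ref{item:cohomatinf0}), if $X$ is bounded then $\Gamma$ is finite and the cellular chain complex yields $H^0_c = R$. If $X$ is unbounded, take any uniformly $0$-acyclic metric complex; a compactly supported $0$-cochain $\alpha$ with $\delta^0 \alpha = 0$ is constant on the connected $1$-skeleton, and the finite-support condition then forces $\alpha = 0$.

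For part~(\ref{item:cohomatinf1}), fix a $2$-dimensional uniformly $1$-acyclic metric complex $(\Gamma, \bC_\bullet)$ whose $1$-skeleton is the cellular chain complex of $\Gamma$ (arranged by starting from a coarsely equivalent graph and attaching finite-diameter $2$-cells uniformly). Since every $0$- or $1$-cycle is supported in some bounded neighbourhood $N_r(x)$ and dies in $N_{\mu(r)}(x)$, uniform $1$-acyclicity gives $\widetilde H_0(\bC_\bullet)=0$ and $H_1(\bC_\bullet)=0$. The universal coefficient theorem (applicable since $R$ is a PID and the chain modules are free) then yields $H^0(\bC_\bullet)=R$ and $H^1(\bC_\bullet)=0$.

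Applying cohomology to the short exact sequence of cochain complexes
\[ 0 \to \bC^\bullet_c \to \bC^\bullet \to \bC^\bullet/\bC^\bullet_c \to 0, \]
and using $H^0_c = 0$ from part~(\ref{item:cohomatinf0}), we obtain
\[ 0 \to R \to H^0(\bC^\bullet/\bC^\bullet_c) \to H^1_c(\bC_\bullet) \to 0. \]
A class in $H^0(\bC^\bullet/\bC^\bullet_c)$ is represented by a function $\alpha$ on vertices of $\Gamma$ with $\delta^0\alpha$ of finite support, modulo finite-support functions. The condition on $\delta^0\alpha$ forces $\alpha$ to be constant on each component of $\Gamma\setminus K$ for some finite subgraph $K$; modulo finite-support corrections (which absorb the values on bounded components), the class is determined by the values of $\alpha$ on the finitely many unbounded components of $\Gamma\setminus K$. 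Taking the direct limit over increasing $K$ exhibits $H^0(\bC^\bullet/\bC^\bullet_c)$ as a direct limit of finite-rank free $R$-modules along split injections, yielding a free $R$-module of rank $e(X)$. The map $R \to H^0(\bC^\bullet/\bC^\bullet_c)$ sends $1$ to the diagonal element, so the quotient $\coarse^1(X;R) = H^1_c(\bC_\bullet)$ is a free $R$-module of rank $e(X)-1$.

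The main technical step is justifying that $H^0(\bC^\bullet/\bC^\bullet_c)$ is a \emph{free} module of the claimed rank, particularly when $e(X)$ is infinite. This reduces to producing compatible splittings for the inclusion maps between finite-rank pieces of the direct limit; local finiteness of $\Gamma$ ensures each piece is finite-rank, and the observation that an unbounded component of $\Gamma\setminus K$ merely refines into finitely many new unbounded components as $K$ grows supplies the splittings.
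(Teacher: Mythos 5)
Your treatment of part~(\ref{item:cohomatinf0}) is essentially identical to the paper's: a compactly supported $0$-cocycle $\alpha$ is constant because any two generators $\sigma,\sigma_0\in\Sigma_0$ differ by a boundary (by uniform $0$-acyclicity, since $\sigma-\sigma_0$ is a bounded $0$-cycle), so $\alpha(\sigma)=\alpha(\sigma_0)$; when $X$ is unbounded, finite support then forces $\alpha=0$, and when $X$ is bounded, evaluation at $\sigma_0$ gives the isomorphism with $R$. Your detour through a coarsely equivalent graph via Proposition~\ref{prop:unif0-acycl} is harmless but unnecessary --- the paper argues directly with an arbitrary uniformly $0$-acyclic metric complex, which is marginally cleaner.

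For part~(\ref{item:cohomatinf1}) the paper gives no proof, citing \cite[\S 13.4]{geoghegan2008topological} and \cite[\S 3.5]{margolis2018quasi} instead, so you have filled in details rather than diverged. Your argument is the standard one and is correct: the long exact sequence of $0\to\bC^\bullet_c\to\bC^\bullet\to\bC^\bullet/\bC^\bullet_c\to 0$; the computation $H^0(\bC^\bullet)\cong R$, $H^1(\bC^\bullet)=0$ (every cycle has bounded support and hence bounds, then apply the universal coefficient theorem); the identification of $H^0(\bC^\bullet/\bC^\bullet_c)$ as a direct limit of the $R^{c(K)}$ along split monomorphisms, with splittings furnished by choosing, for each unbounded component of $\Gamma\setminus K$, one unbounded component of $\Gamma\setminus K'$ inside it; and the observation that $R\to H^0(\bC^\bullet/\bC^\bullet_c)$ factors through $R^{c(\emptyset)}\cong R$ as an isomorphism, hence is split, so $\coarse^1(X;R)$ is free of rank $e(X)-1$. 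Two small expository points worth adding: you should justify in a sentence that the metric complex may be built with $1$-skeleton equal to the cellular chain complex of $\Gamma$ (the paper's cited proposition only asserts existence of some uniformly $1$-acyclic metric complex, although the standard construction does begin from a graph), and the injectivity of $R^{c(K)}\to R^{c(K')}$ deserves the one-line remark that every unbounded component of $\Gamma\setminus K$ contains at least one unbounded component of $\Gamma\setminus K'$.
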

The condition that $X$ is  coarsely uniformly $1$-acyclic over $R$ can be removed, although we do not need this more general formulation of (\ref{item:cohomatinf1}) in this article.
\begin{proof}
Let $(X,\bC_\bullet,\Sigma_\bullet,p_\bullet)$ be a uniformly $0$-acyclic metric complex over $R$. Suppose $\sigma_0\in \Sigma$ and $\alpha\in \bC^0_c$ is a compactly supported cocycle. For every  $\sigma\in \Sigma_0$, we have that $\alpha(\sigma)=\alpha(\sigma_0)$ since $\sigma-\sigma_0$ is a boundary and $\alpha$ is a cocycle.  Thus $\alpha\in\bC^0_c$ is determined by its value on $\sigma_0$. 
When $X$ is unbounded, since $\supp(\alpha)$ was assumed to be finite, we deduce that $\alpha\equiv 0$. When $X$ is bounded, the map $H^0_c(C_\bullet)\rightarrow R$ given by $[\alpha]\mapsto \alpha(\sigma_0)$ is easily seen to be an isomorphism.
The proof of (\ref{item:cohomatinf1}) is standard and follows from the arguments in \cite[\S 13.4]{geoghegan2008topological} and \cite[\S 3.5]{margolis2018quasi}.
\end{proof}

We conclude with a universal coefficient theorem for coarse cohomology. Let $R$ and $S$ be PIDs and suppose $\iota: R\rightarrow S$ is  a ring homomorphism. Then $S$ can be thought of as an $R$-module via $r\cdot s=\iota(r)s$.  We now have the following universal coefficient theorem for coarse cohomology:
\begin{prop}\label{prop:univ coefficient}
Let $R$ and $S$ be as above. Suppose $X$ is coarsely uniformly $n$-acyclic over $R$. Then $X$ is coarsely uniformly $n$-acyclic over $S$ and for each $k<n$, we have a split short exact sequence
\[
0\rightarrow\coarse^k(X;R)\otimes_R S \rightarrow \coarse^k(X;S)\rightarrow \tor^R_1(\coarse^{k+1}(X;R), S)\rightarrow 1.
\]
\end{prop}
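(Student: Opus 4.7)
The plan is to realise both $\coarse^\bullet(X;R)$ and $\coarse^\bullet(X;S)$ as cohomologies of a single compactly supported cochain complex (after a change of coefficients) and then invoke the classical universal coefficient theorem. First, by the characterisation of coarse uniform acyclicity cited just above, choose an $(n+1)$-dimensional uniformly $n$-acyclic metric complex $(X,\bC_\bullet,\Sigma_\bullet,p_\bullet)$ over $R$ with acyclicity function $\mu$. Form the change-of-coefficients complex $(X,\bC_\bullet\otimes_R S,\Sigma_\bullet,p_\bullet)$: the control space, the bases, the augmentation and the displacement bounds of the boundary maps are all inherited unchanged, so this is a metric complex over $S$. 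Applying the classical homological UCT to the inclusions $\bC_\bullet[N_r(x)]\hookrightarrow\bC_\bullet[N_{\mu(r)}(x)]$ shows that, in degrees $\leqslant n$, both the tensor and the $\tor^R_1$ terms of the map induced on $S$-homology vanish, since the corresponding map on $R$-homology is already zero. Hence $(X,\bC_\bullet\otimes_R S)$ is uniformly $n$-acyclic over $S$ with the same acyclicity function $\mu$, and in particular $X$ is coarsely uniformly $n$-acyclic over $S$.

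Next, because each $\bC_i$ is a free $R$-module on the standard basis $\Sigma_i$, the compactly supported cochain module $\bC^i_c=\cHom_R(\bC_i,R)$ is itself a free $R$-module on the dual basis $\{\sigma^*\mid\sigma\in\Sigma_i\}$. Tensoring cochain-by-cochain with $S$ gives a natural isomorphism of cochain complexes
\[\bC^\bullet_c\otimes_R S\;\xrightarrow{\ \cong\ }\;\cHom_S(\bC_\bullet\otimes_R S,\,S),\qquad\sigma^*\otimes s\,\longmapsto\,\bigl((\tau\otimes 1)\mapsto \delta_{\sigma\tau}\,s\bigr),\]
so the right hand side, which computes $\coarse^\bullet(X;S)$ via the metric complex $(X,\bC_\bullet\otimes_R S)$, is precisely the tensor product of the free $R$-cochain complex $\bC^\bullet_c$ with the $R$-module $S$.

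Finally, apply the classical universal coefficient theorem to $\bC^\bullet_c$, regarded as a chain complex of free $R$-modules in non-positive degrees. For each $k<n$ this produces a non-canonically split short exact sequence
\[0\to H^k(\bC^\bullet_c)\otimes_R S\to H^k(\bC^\bullet_c\otimes_R S)\to \tor^R_1(H^{k+1}(\bC^\bullet_c),\,S)\to 0,\]
the splitting being guaranteed by freeness of the constituent modules over the PID $R$. Substituting $\coarse^k(X;R)=H^k(\bC^\bullet_c)$ and $\coarse^k(X;S)=H^k(\bC^\bullet_c\otimes_R S)$ yields the desired sequence. The only genuine subtlety, and the one point I expect to demand the most care, is the degree shift in the Tor factor: the chain-complex UCT places $H_{k-1}$ in the Tor slot, and reversing the grading to pass to cochain indices converts this into $H^{k+1}$ rather than $H^{k-1}$. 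Apart from this bookkeeping, all of the remaining steps are routine applications of classical homological algebra.
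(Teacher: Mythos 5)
Your proof takes essentially the same route as the paper: tensor the metric complex by $S$ to obtain a uniformly acyclic metric complex over $S$, identify $\cHom_S(\bC_\bullet\otimes_R S,\,S)$ with $\bC^\bullet_c\otimes_R S$ via the dual bases induced by $\Sigma_\bullet$, and then invoke the algebraic universal coefficient theorem (the paper phrases this as the K\"unneth formula with one factor concentrated in degree zero). Your remark about the degree shift in the Tor term when passing from chain to cochain indexing is correct, and the isomorphism $\bC^\bullet_c\otimes_R S\cong\cHom_S(\bC_\bullet\otimes_R S,\,S)$ --- which fails for $\Hom$ but holds for $\cHom$ precisely because of the dual bases --- is the same key observation the paper makes.

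There is one small slip in your verification that $(X,\bC_\bullet\otimes_R S)$ is uniformly $n$-acyclic: you assert it has ``the same acyclicity function $\mu$.'' Knowing that both flanking maps in a commutative ladder of universal-coefficient short exact sequences vanish does \emph{not} force the middle map to vanish; a short exact sequence $0\to A\to B\to C\to 0$ can admit endomorphisms that are zero on $A$ and on $C$ but not on $B$. What is true is that the \emph{composite} of two such ladders is zero: if $\beta\in\widetilde H_k(\bC_\bullet[N_r(x)]\otimes_R S)$, its image in $\widetilde H_k(\bC_\bullet[N_{\mu(r)}(x)]\otimes_R S)$ maps to zero in the Tor term, hence lifts to the $\otimes$ term, which in turn dies under the next inclusion. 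So the correct acyclicity function for $\bC_\bullet\otimes_R S$ is $\mu\circ\mu$, not $\mu$. The paper simply asserts the uniform $n$-acyclicity of $\bC_\bullet\otimes_R S$ without comment, so this is a detail it elides; once you replace $\mu$ by $\mu\circ\mu$, your argument is complete and agrees with the paper's.
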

\begin{proof}
Let  $(X,\bC_\bullet,\Sigma_\bullet,p_\bullet)$ be a uniformly $n$-acyclic metric complex over $R$. Then $\bC_\bullet\otimes_R S$ is a uniformly $n$-acyclic metric complex over $S$. Thus 
	\[\coarse^k(X;S)\cong H^k(\cHom_S(\bC_\bullet\otimes_R S,S)),\] where  $\cHom_S(\bC_\bullet\otimes_R S,S)$ consists of $S$-module  homomorphisms from $\bC_\bullet\otimes_R S$ to $S$ with finite supports.

We now define a map
\[\phi: \cHom_R(\bC_\bullet,R)\otimes_R S\rightarrow \cHom_S(\bC_\bullet\otimes_R S,S)\] given by $\phi(\alpha\otimes s)(\rho\otimes s')=\alpha(\rho)\cdot ss'$ for $\alpha \in \cHom_R(\bC_\bullet,R)$, $\rho\in \bC_\bullet$ and $s,s'\in S$. It is straightforward to verify that $\phi$ is an isomorphism of cochain complexes. The key point is the standard basis $\Sigma_i$ of $\bC_i$ induces  dual bases of $\cHom_R(\bC_i,R)\otimes_R S$ and $\cHom_S(\bC_i\otimes_R S,S)$. This is not the case if homomorphisms are allowed to have infinite support.  
We now apply the standard K\"unneth formula for tensor products of chain complexes (see Proposition \ref{prop:kunnethchain}) to obtain the desired short exact sequence.
\end{proof}

\section{Cohomological dimension and group cohomology}\label{sec:ccd}
Let $R$ be a PID. We say that a group $G$ is of \emph{type $FP_n(R)$}  if the trivial $RG$-module $R$ has a projective resolution $\bP_\bullet\rightarrow R$  such that $\bP_i$ is finitely generated as an $RG$-module for $i\leq n$. We say that a group $G$  is of type $FP_\infty(R)$ if it is of type $FP_n(R)$ for every $n$.
A group is finitely generated if and only if it is of type $FP_1(\bbZ)$. We say that a group is \emph{almost finitely presented} if it is of type $FP_2(\bbZ_2)$. Every finitely presented group is almost finitely presented. 
These properties can be characterised geometrically:
\begin{prop}[See \cite{kapovich2005coarse}, \cite{drutu2018geometric} and \cite{margolis2018quasi}] \label{prop:geomcharoffiniteness}
Suppose that $G$ is a discrete countable group equipped with a left-invariant proper metric. Then:
\begin{enumerate}
\item $G$ is of type $FP_n(R)$ if and only if $G$ is coarsely uniformly $(n-1)$-acyclic over $R$;
\item $G$ is of type $FP_\infty(R)$ if and only if $G$ is coarsely uniformly acyclic over $R$.
\end{enumerate}
\end{prop}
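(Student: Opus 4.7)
The proof hinges on the Rips complex $P_r(G)$ of the metric space $G$. Since $G$ is discrete and countable with a proper left-invariant metric, the action of $G$ on $P_r(G)$ by left translation is cocompact (the vertex set is a single orbit) with finite cell stabilizers (since the metric is proper, balls are finite, so simplex stabilizers are finite). This gives the cellular chain complex $\bC_\bullet(P_r(G);R)$ a dual life: as a finite-type metric complex over $G$, and as a chain complex of $RG$-modules whose terms are finite direct sums of permutation modules $R[G/F_\sigma]$ indexed by orbit representatives of cells.

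For the forward direction, assuming $G$ is of type $FP_n(R)$, the plan is to fix a partial projective resolution $\bP_n\to \dots \to \bP_0\to R$ with each $\bP_i$ finitely generated over $RG$, and compare it with the directed system $\{\bC_\bullet(P_r(G);R)\}_{r\to\infty}$. A standard inductive lifting argument (building chain maps and partial homotopies one dimension at a time, using $G$-finiteness of each $\bP_i$ to bound the parameter $r$ at each step) produces $r_0$ such that the inclusion $P_r(G)\hookrightarrow P_s(G)$ induces zero on $\tilde H_k(\cdot;R)$ for $k\leq n-1$ and all sufficiently large $s$. By Remark \ref{rem:simplifyuniformity}, together with the transitivity of $G$ on vertices, this is precisely coarse uniform $(n-1)$-acyclicity.

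For the converse, the proposition preceding the statement supplies an $n$-dimensional uniformly $(n-1)$-acyclic metric complex over $G$. After making the construction $G$-equivariant (which can be arranged by building it directly from truncations of Rips complexes rather than abstract metric complexes), one applies Brown's criterion to extract a partial $RG$-resolution of $R$ by finitely generated projective modules, certifying $FP_n(R)$. Part (2) follows from part (1) by letting $n\to\infty$, with compatibility of the successive resolutions ensured by the inductive nature of the lifting in part (1).

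The main obstacle throughout is torsion: cell stabilizers $F_\sigma$ in $P_r(G)$ are finite but their orders need not be invertible in $R$, so the permutation modules $R[G/F_\sigma]$ appearing in the cellular chain complex are not automatically projective over $RG$. This is why the converse argument cannot simply read off a finitely generated projective resolution from the Rips chain complex, and must instead invoke Brown's criterion (which handles stabilizers through the homology of the full complex rather than module-by-module) or a similar algebraic device to produce genuine finitely generated projectives.
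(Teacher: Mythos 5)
The paper does not actually prove Proposition \ref{prop:geomcharoffiniteness}; it is cited from \cite{kapovich2005coarse}, \cite{drutu2018geometric} and \cite{margolis2018quasi}. However, the paper proves the closely analogous Proposition \ref{prop:acyclicity of quotient space} (acyclicity of quotient spaces) by a single application of Brown's criterion (Theorem \ref{thm:browncrit}) to the contractible complex $P_\infty(G/H)$ with its Rips filtration, and the same argument with $H=1$ is the natural in-paper route to the present statement: $P_\infty(G)$ is a contractible $G$-CW complex with finite cell stabilizers (hence $FP_\infty$), the filtration $(P_r(G))_r$ has finite $n$-type because $G$ has bounded geometry, and Brown's criterion then says that essential $(n-1)$-acyclicity of the filtration --- which by Remark \ref{rem:simplifyuniformity} is coarse uniform $(n-1)$-acyclicity --- holds \emph{if and only if} $G$ is $FP_n(R)$. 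That single iff gives both directions at once, and part (2) follows by letting $n\to\infty$.

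Your proposal is more elaborate than this and contains a gap in the forward direction. You propose to lift a map $\bC_\bullet(P_r(G))\to\bP_\bullet$ into a chosen finitely generated projective partial resolution $\bP_\bullet\to R$. Such a lift requires each $\bC_i(P_r(G))$ to be projective over $RG$, but the unordered Rips chain modules are direct sums of permutation modules $R[G/F_\sigma]$ with $F_\sigma$ a nontrivial finite simplex stabiliser whenever $G$ has torsion (e.g.\ an edge $\{x,gx\}$ with $g^2=1$ has stabiliser $\bbZ_2$), and these are not projective over $RG$ in general. This is precisely the torsion obstruction you yourself flag at the end, but you raise it only for the converse; it blocks your forward lifting argument as written too. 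The standard fix is to replace the cellular chain complex by the ordered simplicial chain complex (or to barycentrically subdivide), where the $G$-action on cells is free and the modules become free $RG$-modules --- or, better, to avoid the lifting altogether and apply Brown's criterion as above, since it is specifically designed to sidestep the projectivity-of-$R[G/F_\sigma]$ issue by working with the homology of the full complex. Your converse direction is also unnecessarily indirect: invoking the abstract metric complex machinery and then ``making it $G$-equivariant by building from Rips complexes'' is circular, since the Rips complex is already $G$-equivariant; one should just apply Brown's criterion to it directly. So the overall strategy and your diagnosis of the torsion problem are sound, but the forward direction as stated does not go through without the ordered-chains modification, and both directions are obtained more cleanly and uniformly from Brown's criterion.
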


The \emph{cohomological dimension} of a group $G$, denoted  $\cd_R(G)$, is defined to be the least $n$ such that the trivial $RG$-module $R$ has a projective resolution of length $n$, i.e. a projective resolution $\bP_\bullet\rightarrow R$  with $\bP_i=0$ for $i>n$. A group is of \emph{type $FP(R)$} if there exists a finite length resolution of $R$ by finitely generated projective $RG$-modules.  A group is said to be of \emph{type $VFP(R)$} if it has a finite index subgroup of type $FP(R)$. If $G$ is of type $VFP(R)$, then the \emph{virtual cohomological dimension of $G$}, $\vcd_R(G)$, is the cohomological dimension of a finite index subgroup of type $FP(R)$.

When the  ring $R$ is omitted from notation, it will be assumed that $R=\bbZ$. In other words, $\cd(G)$ denotes $\cd_\bbZ(G)$, type $VFP$ denotes type $VFP(\bbZ)$ etc. If $G$ is of type $FP_n$, then it is of type $FP_n(R)$ for any $R$. 
We frequently make use of the following fact:
\begin{prop}[\cite{brown1982cohomology}]\label{prop:VFP max cohom nonvanish}
If $G$ is of type $VFP(R)$, then \[\vcd_R(G)=\max\{n\mid H^n(G,RG)\neq 0\}.\]
\end{prop}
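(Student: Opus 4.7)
The plan is to reduce to the case where $G$ is itself of type $FP(R)$ (not merely virtually so), and then run the classical argument of Brown for type $FP$ groups. First I would choose a finite-index subgroup $G_0 \leqslant G$ of type $FP(R)$ with $\cd_R(G_0) = \vcd_R(G)$, as furnished by the definition. Since $RG$ is a free $RG_0$-module of finite rank $[G:G_0]$, there is an isomorphism of $R$-modules $H^n(G_0, RG) \cong H^n(G_0, RG_0)^{[G:G_0]}$. On the other hand, because induction and coinduction agree for finite-index subgroups, Shapiro's lemma gives $H^n(G, RG) \cong H^n(G_0, RG)$. Combining these, we find that $H^n(G, RG) \neq 0$ if and only if $H^n(G_0, RG_0) \neq 0$, so it suffices to prove the identity for $G_0$.

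The next step is to show that for a group $H$ of type $FP(R)$ with $\cd_R(H) = n$, one has $H^n(H, RH) \neq 0$ (the vanishing $H^k(H, RH) = 0$ for $k > n$ is immediate from the definition of $\cd_R$). The key observation is that since $H$ admits a resolution $\bP_\bullet \to R$ by finitely generated projectives, each functor $H^k(H, -)$ commutes with arbitrary direct sums of coefficient modules: this reduces to the fact that $\Hom_{RH}(\bP_k, -)$ commutes with direct sums when $\bP_k$ is finitely generated, together with the observation that cohomology of a cochain complex commutes with direct sums.

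Given this, since $\cd_R(H) = n$, there exists some $RH$-module $M$ with $H^n(H, M) \neq 0$. Writing $M$ as a quotient of a free $RH$-module $F \cong \bigoplus_I RH$ with kernel $K$, the long exact sequence in cohomology contains
\[
H^n(H, F) \longrightarrow H^n(H, M) \longrightarrow H^{n+1}(H, K),
\]
and the last term vanishes because $\cd_R(H) = n$. Hence $H^n(H, F) \neq 0$, and commutativity with direct sums then forces $H^n(H, RH) \neq 0$, completing the proof.

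There is no serious obstacle in this argument — it is a classical result of Brown. The only technical points to be careful with are (i) the identification $H^*(G, RG) \cong H^*(G_0, RG_0)^{[G:G_0]}$ in the passage from virtual to absolute, which uses that $RG$ is finitely and freely generated over $RG_0$ together with the coincidence of induction and coinduction for finite-index subgroups, and (ii) the commutation of $H^n(H, -)$ with direct sums, which relies crucially on the finite generation of the terms of the projective resolution.
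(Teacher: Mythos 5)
The paper does not prove this proposition but cites Brown's book; your argument reproduces Brown's (Proposition VIII.6.7 there) together with the expected reduction from the $VFP(R)$ case to the $FP(R)$ case, and the $FP(R)$ part — commutativity of $H^k(H,-)$ with direct sums plus the long exact sequence of a free presentation of a witnessing module — is correct. There is, however, a slip in the reduction step: Shapiro's lemma does not give $H^n(G, RG) \cong H^n(G_0, RG)$. What Shapiro gives, applied with $M = RG_0$ and using that $RG = \mathrm{Ind}_{G_0}^G RG_0 \cong \mathrm{Coind}_{G_0}^G RG_0$ at finite index, is $H^n(G, RG) \cong H^n(G_0, RG_0)$ directly, which renders your first isomorphism $H^n(G_0, RG) \cong H^n(G_0, RG_0)^{[G:G_0]}$ unnecessary. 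For the record, the correct relation involving $H^n(G_0, RG)$ runs the other way: applying Shapiro to $\mathrm{Res}^G_{G_0} RG$ gives $H^n(G_0, RG) \cong H^n(G, RG)^{[G:G_0]}$, the transpose of what you wrote. Since only the vanishing or non-vanishing of these $R$-modules is at stake, your conclusion that $H^n(G, RG) \neq 0$ iff $H^n(G_0, RG_0) \neq 0$ is still correct, and the remainder of the argument goes through.
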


The cohomological  dimension of a group over $R$ sometimes  depends on the choice of $R$. For instance, Dicks and Leary have shown that there exists a group $G$  such that $\cd_\bbF(G)< \cd(G)<\infty$ for any field $\bbF$ \cite{dicksleary1998subgroup}. This cannot happen for groups of type $VFP(R)$, as Proposition \ref{prop:type vf} demonstrates.

If $R$ is a PID,  let $\bbP_R$ denote the collection of primes in $R$. For each $p\in \bbP_R$, let $R_p$ denote the field $R/pR$ and let $R_0$ denote the field of fractions of $R$. We use the following result, which follows easily from \cite[Lemma 3.6]{bieri1976normal} and the universal coefficient theorem for group cohomology.
\begin{prop}\label{prop:type vf}
Let $R$ be a PID and suppose $G$ is of type $VFP(R)$.  Then $\vcd_{R_i}(G)\leq\vcd_R(G)$ for every $i\in \bbP_R\cup\{0\}$, and   $\vcd_{R_i}(G)=\vcd_R(G)$ for some $i\in \bbP_R\cup\{0\}$.
\end{prop}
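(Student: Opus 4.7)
The plan is to reduce to the case that $G$ is itself of type $FP(R)$ by passing to a finite index subgroup of type $FP(R)$, which does not affect any of the virtual cohomological dimensions in question. With $n = \vcd_R(G)$ I would fix a finite resolution
\[
0 \to P_n \to P_{n-1} \to \cdots \to P_0 \to R \to 0
\]
by finitely generated projective $RG$-modules. Since each $P_j$ is a direct summand of a free $RG$-module, it is flat over $R$; hence applying $- \otimes_R R_i$ produces a length-$n$ projective resolution of $R_i$ by finitely generated projective $R_i G$-modules. This immediately shows that $G$ is of type $FP(R_i)$ and gives $\vcd_{R_i}(G) \leq n$, the first half of the claim.

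For the equality assertion, Proposition \ref{prop:VFP max cohom nonvanish} supplies $H^n(G, RG) \neq 0$. I would set $K^\bullet \coloneqq \Hom_{RG}(P_\bullet, RG)$, a cochain complex whose terms are $R$-flat, and use that the natural map
\[
\Hom_{RG}(P_j, RG) \otimes_R R_i \to \Hom_{R_i G}(P_j \otimes_R R_i, R_i G)
\]
is an isomorphism (obvious for free modules of finite rank and preserved under direct summands). The universal coefficient theorem applied to $K^\bullet$ with coefficients in $R_i$ then yields the exact sequence
\[
0 \to H^n(G, RG) \otimes_R R_i \to H^n(G, R_i G) \to \tor_1^R(H^{n+1}(G, RG), R_i) \to 0,
\]
and the vanishing of $H^{n+1}(G, RG)$ reduces the problem to finding $i \in \bbP_R \cup \{0\}$ with $M \otimes_R R_i \neq 0$, where $M \coloneqq H^n(G, RG)$.

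The key observation I would exploit is that $M$ is finitely generated as an $RG$-module, since it is a quotient of the finitely generated $RG$-module $\Hom_{RG}(P_n, RG)$. If some element of $M$ is not $R$-torsion, then $M \otimes_R R_0 \neq 0$ and I take $i = 0$. Otherwise, using that $rm = 0$ implies $r$ annihilates all of $RG \cdot m$, a finite product $r$ of the $R$-annihilators of a set of $RG$-generators will annihilate all of $M$, so $M$ becomes a module over $R/(r)$. The Chinese remainder theorem then decomposes $M$ as a finite direct sum of $p$-primary components over the prime factors of $r$; I would select a prime $p$ with nonzero component $M_p$. Since $M_p$ is annihilated by some power $p^a$, the equality $pM_p = M_p$ would iterate to $M_p = p^a M_p = 0$, a contradiction, so $M/pM \neq 0$ and therefore $M \otimes_R R_p \neq 0$. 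Applying Proposition \ref{prop:VFP max cohom nonvanish} over $R_p$ finally delivers $\vcd_{R_p}(G) \geq n$. The main obstacle, as I see it, is this last finiteness argument, which must carefully leverage $RG$-finite generation of $M$ rather than the stronger (and generally false) $R$-finite generation.
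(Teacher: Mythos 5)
Your proof is correct and follows the route the paper only gestures at. Since the paper itself gives no proof (it merely cites Bieri's Lemma 3.6 together with the universal coefficient theorem for group cohomology), the comparison is necessarily against that one-line outline; your reconstruction is in exactly the same spirit, with the UCT doing the reduction to the question of when $H^n(G,RG)\otimes_R R_i$ vanishes. The genuinely non-trivial ingredient you had to supply yourself is the observation that $M=H^n(G,RG)$ is finitely generated \emph{as an $RG$-module} (being a quotient of $\Hom_{RG}(P_n,RG)$, using that $K^{n+1}=0$), which is precisely what makes the torsion dichotomy work: a finite set of $RG$-generators forces an $R$-torsion module $M$ to be annihilated by a single $r\in R$, permitting the CRT decomposition and the $pM_p=M_p\Rightarrow M_p=0$ iteration. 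This step cannot be dispensed with --- e.g.\ $M=\bbQ/\bbZ$ is all torsion yet $M/pM=0$ for every $p$ --- and it is presumably the content of the cited lemma of Bieri. A few minor checks you left implicit are fine: over a PID projective implies free (Kaplansky), so $\Hom_{RG}(P_j,RG)$ really is a complex of free $R$-modules and the UCT applies; and $M\otimes_R R_0\ne 0$ in the non-torsion case follows because the kernel of $M\to M\otimes_R R_0$ is exactly the torsion submodule. Altogether this is a complete and correct proof, essentially the one the paper intends.
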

Proposition \ref{prop:type vf} plays a crucial role in the proof of Theorem \ref{thm:main vfp}, allowing us to reduce the general case  to the case where $R$ is a field.
The following was noted as Proposition 3.28 in \cite{margolis2018quasi}, and follows easily from \cite[\S VIII, Proposition 7.5]{brown1982cohomology}; see also  \cite{gersten1993quasi}. 
\begin{prop}\label{prop:coarse cohom vs cohom}
If $G$ is a group of type $FP_n(R)$, then $H^k(G,RG)$ and $\coarse^k(G;R)$ are isomorphic as $R$-modules for any $k< n$. In particular, $H^k(G,RG)$ is a quasi-isometry invariant amongst groups of type $FP_\infty(R)$.
\end{prop}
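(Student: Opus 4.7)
The plan is to construct a specific uniformly $(n-1)$-acyclic metric complex over $G$ directly from a finitely generated free resolution of the trivial module, and then identify its compactly supported cochain complex with the cochain complex computing $H^*(G,RG)$. Since coarse cohomology is independent of the choice of metric complex by Proposition 3.26 of \cite{margolis2018quasi}, the resulting isomorphism will give the desired result.

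More precisely, since $G$ is of type $FP_n(R)$, we may choose a partial free resolution $\bF_n\to \bF_{n-1}\to\cdots\to\bF_0\to R$ with each $\bF_i$ a finitely generated free $RG$-module. Fix a finite $RG$-basis $B_i$ for each $\bF_i$; then $\Sigma_i \coloneqq G\times B_i$ is an $R$-basis of $\bF_i$. Equip $G$ with a proper left-invariant metric and define $p_i\colon \Sigma_i\to G$ by $(g,b)\mapsto g$. Each $\bF_i$ becomes a finite-type free $R$-module over $G$, and since the $RG$-equivariant boundary maps are determined by their values on the finite set $B_i$, they have uniformly bounded displacement over the identity of $G$. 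Truncating at degree $n$ yields an $n$-dimensional metric complex $(G,\bF_\bullet)$ over $R$. Uniform $(n-1)$-acyclicity follows from exactness of the resolution together with cocompactness of the $G$-action: any reduced $k$-cycle supported in $N_r(x)$ (with $k<n$) can be translated by an element of $G$ to a $k$-cycle supported in $N_r(e)$, where there are only finitely many basis elements; since each such cycle bounds by exactness, the maximum support-radius of a bounding chain over this finite collection, translated back, supplies a uniform acyclicity function.

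The crux is then the natural isomorphism of cochain complexes
\[
\Phi\colon \Hom_{RG}(\bF_\bullet, RG)\xrightarrow{\ \cong\ } \cHom_R(\bF_\bullet, R),
\]
sending $\phi$ to the $R$-linear functional $\tilde\phi(v) = [\phi(v)]_1$, i.e.\ the coefficient of $1\in G$. Because each $\bF_i$ admits a finite $RG$-basis, an $RG$-linear map $\phi\colon \bF_i\to RG$ is determined by finitely many elements $\phi(b)\in RG$, each of finite support in $G$, and hence $\tilde\phi$ has finite support in $\Sigma_i=G\times B_i$. Conversely, any finite-support $R$-linear functional $\tilde\phi$ determines a unique $RG$-linear $\phi$ via $\phi(v)=\sum_{g\in G}\tilde\phi(g^{-1}v)\,g$; the sum is finite for each $v$ because $\tilde\phi$ has finite support. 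A direct check shows $\Phi$ commutes with the coboundary, so passing to cohomology yields $H^k(G,RG)\cong H^k_c(\bF_\bullet)$ for all $k$. Applying the definition of coarse cohomology, $H^k_c(\bF_\bullet)=\coarse^k(G;R)$ for $k<n$, which is the desired isomorphism.

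The main obstacle is the identification $\Phi$: one has to verify carefully that the finite-support condition on the $R$-linear side matches precisely the $RG$-linearity with values in $RG$ on the other. This relies crucially on the $\bF_i$ being finitely generated as $RG$-modules; without this, elements of $\cHom_R(\bF_i,R)$ would not correspond to $RG$-homomorphisms into $RG$, as already hinted in the proof of Proposition \ref{prop:univ coefficient}. The quasi-isometry invariance statement for $FP_\infty(R)$ groups is then immediate: any quasi-isometry between such groups is a coarse equivalence, and Proposition 3.26 of \cite{margolis2018quasi} ensures coarse equivalences induce isomorphisms on $\coarse^k$ in all degrees.
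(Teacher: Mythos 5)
Your proof is correct and follows essentially the same route as the one the paper refers to (Brown's \S VIII, Proposition 7.5): turn a finite-type free resolution into a metric complex over $G$, then identify $\Hom_{RG}(\bF_\bullet,RG)$ with the compactly supported cochains $\cHom_R(\bF_\bullet,R)$ via the coefficient-of-$1$ map. The only small points worth tightening are that you should take $\bF_0=RG$ (so the metric-complex augmentation $\sigma\mapsto 1_R$ coincides with the resolution's augmentation), and that "the maximum support-radius over this finite collection" should refer to a finite $R$-generating set of the module of $k$-cycles supported near $e$, which exists since $R$ is Noetherian; neither affects the substance of the argument.
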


We now introduce the notion of  Gorenstein cohomological dimension of groups, a generalisation of (virtual) cohomological dimension that allows for proper group actions rather than free group actions. We refer the reader to \cite{holm2004} for definitions and properties of Gorenstein projective modules. We say that a group $G$ has finite \emph{Gorenstein cohomological dimension} over $R$, denoted $\Gcd_R(G)$, if the trivial $RG$-module $R$ admits a finite length resolution by Gorenstein projective modules.

As the following proposition shows, the class of groups with finite Gorenstein cohomological dimension is much broader than the class of groups with finite virtual cohomological dimension. In particular, such groups are not required to be virtually torsion-free.
\begin{prop}[{\cite[Proposition 3.1]{bahlekeh2009gorenstein}} and {\cite[Proposition 2.1]{emmanouil2018gorenstein}}]\label{prop:CW complex -> finite gcd}
Suppose a group $G$ acts properly and cellularly on a contractible finite-dimensional  CW complex $X$. Then $\Gcd_R(G)\leq \dim(X)<\infty$ for any commutative ring $R$.
\end{prop}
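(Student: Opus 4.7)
The plan is to use the cellular chain complex of $X$ as a Gorenstein projective resolution of the trivial $RG$-module $R$. Since $X$ is a contractible CW complex of dimension $n=\dim(X)$, its augmented cellular chain complex is exact:
\[
0\to C_n(X;R)\to\cdots\to C_0(X;R)\to R\to 0.
\]
This is a length $n$ resolution of $R$ by $RG$-modules, so it suffices to show that each $C_i(X;R)$ is a Gorenstein projective $RG$-module.

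I would decompose each $C_i(X;R)$ as a direct sum over $G$-orbits of $i$-cells. After subdividing $X$ if necessary so that every cell is fixed pointwise by its stabilizer (a standard trick to avoid orientation twists), each orbit contributes a summand isomorphic to $RG\otimes_{RF} R$, where $F$ is the stabilizer of a chosen cell representative. Since the action is proper, every such $F$ is a finite subgroup of $G$. Thus the problem reduces to showing that for every finite subgroup $F\leqslant G$, the induced module $RG\otimes_{RF} R$ is Gorenstein projective over $RG$.

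The core of the argument, and likely the main obstacle, is verifying two facts about finite groups and induction. First, for a finite group $F$ and any commutative ring $R$, the trivial module $R$ is Gorenstein projective over $RF$; this is ultimately a consequence of the fact that $RF$ is a Frobenius $R$-algebra (the trace pairing $RF\otimes RF\to R$ is non-degenerate), which allows one to splice a projective resolution of $R$ with its dual into a complete projective resolution of the kind required by the definition of Gorenstein projectivity. Second, since $RG$ is free (hence flat and projective) as a right $RF$-module, the induction functor $RG\otimes_{RF}-$ carries complete projective resolutions to complete projective resolutions, and therefore takes Gorenstein projective $RF$-modules to Gorenstein projective $RG$-modules.

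Assembling these pieces, each $C_i(X;R)$ is a direct sum of Gorenstein projective $RG$-modules and hence is itself Gorenstein projective (the class of Gorenstein projectives is closed under direct sums). The cellular chain complex is then a length-$n$ resolution of $R$ by Gorenstein projective $RG$-modules, which gives $\Gcd_R(G)\leqslant n=\dim(X)<\infty$, as claimed.
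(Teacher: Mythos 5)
The paper does not prove this proposition; it is quoted from Bahlekeh--Dembegioti--Talelli and Emmanouil, so there is no in-paper proof to compare against. Your argument is correct and is the standard one those references use: after a subdivision to make the action admissible, the augmented cellular chain complex is a length-$\dim(X)$ resolution of $R$ by $RG$-modules, each of which is a direct sum of modules $RG\otimes_{RF}R$ with $F$ finite; the trivial module $R$ is Gorenstein projective over $RF$ because $RF$ is a Frobenius $R$-algebra (splice a finite-type free resolution with its $R$-dual to get a totally acyclic complex), and induction along $RF\hookrightarrow RG$ preserves Gorenstein projectivity since $RG$ is free over $RF$ and restriction of projectives is projective, which makes the adjunction $\Hom_{RG}(RG\otimes_{RF}-,Q)\cong\Hom_{RF}(-,\mathrm{Res}^G_F Q)$ preserve total acyclicity.
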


We now state some properties of Gorentstein cohomological dimension.
\begin{prop}[\cite{emmanouil2018gorenstein}]\label{prop:gorenstein}
Suppose  $G$ is a group of type $FP_\infty(R)$ with $\Gcd_R(G)<\infty$, where $R$ is a PID. Then:
\begin{enumerate}
\item $\Gcd_R(G)=\max\{n\mid H^n(G,RG)\neq0\}$;
\item if $H\leq G$, then $\Gcd_R(H)\leq \Gcd_R(G)$.
\end{enumerate}
In particular, if $\vcd_R(G)<\infty$, then $\Gcd_R(G)=\vcd_R(G)$.
\end{prop}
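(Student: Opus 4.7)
The plan is to largely defer parts (1) and (2) to Emmanouil's paper and focus the effort on deducing the ``in particular'' clause, which is the part that directly interfaces with the rest of this article.

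For (1), the approach is the standard one in the Gorenstein homological algebra of groups: under the hypothesis that $G$ is of type $FP_\infty(R)$ with $\Gcd_R(G)<\infty$, the trivial module $R$ admits a complete projective resolution in the sense of Cornick--Kropholler, and Tate-type cohomology $\widehat{H}^*(G,-)$ can be computed. The top nonvanishing dimension of ordinary cohomology with free coefficients $H^*(G,RG)$ then pins down the Gorenstein projective dimension of $R$ over $RG$, exactly as in the classical case (Proposition \ref{prop:VFP max cohom nonvanish}). For (2), a Gorenstein projective resolution of $R$ over $RG$ restricts to one over $RH$, since $RG$ is free (hence Gorenstein projective) as an $RH$-module; the length cannot increase, giving the inequality. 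Both arguments are carried out in \cite{emmanouil2018gorenstein}, so in the write-up I would simply cite them.

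For the ``in particular'' clause, assume $\vcd_R(G)=n<\infty$, and choose a finite-index subgroup $G_0\leq G$ of type $FP(R)$ with $\cd_R(G_0)=n$. First I would verify that the hypotheses of the main statement are actually automatic here: type $FP(R)$ passes from $G_0$ to $G$ by inducing a finite-length, finitely generated projective resolution, so $G$ is of type $FP_\infty(R)$; moreover, $G_0$ being of type $FP(R)$ and torsion-free gives a finite-dimensional $K(G_0,1)$, whose universal cover is a finite-dimensional contractible CW complex on which $G$ acts properly and cellularly, so by Proposition \ref{prop:CW complex -> finite gcd}, $\Gcd_R(G)<\infty$.

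Next I compare the top-degree cohomologies. By Proposition \ref{prop:VFP max cohom nonvanish}, $\max\{k\mid H^k(G_0,RG_0)\neq 0\}=n$. Because $[G:G_0]<\infty$, the module $RG$ is a free $RG_0$-module of finite rank, isomorphic to $(RG_0)^{[G:G_0]}$; Shapiro's lemma (or the isomorphism $H^*(G,RG)\cong H^*(G_0,RG\!\downarrow_{G_0})$) then yields
\[
H^k(G,RG)\;\cong\; H^k(G_0,RG_0)^{[G:G_0]}
\]
as $R$-modules, so the top nonvanishing degree is still $n$. Applying part (1) to $G$ gives $\Gcd_R(G)=n=\vcd_R(G)$, which is what we wanted.

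The main obstacle is really (1): identifying Gorenstein cohomological dimension with the top nonvanishing degree of $H^*(G,RG)$ requires the machinery of complete resolutions over $RG$, which is nontrivial to set up and is the substantive content of \cite{emmanouil2018gorenstein}. The rest — subgroup monotonicity, the Shapiro-lemma identification, and the existence of a proper action on a finite-dimensional contractible complex — is comparatively routine.
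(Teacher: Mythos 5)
The paper offers no proof of this proposition; it is stated entirely as a citation to \cite{emmanouil2018gorenstein}, ``in particular'' clause included. So there is no internal argument to compare against, and deferring parts (1) and (2) to Emmanouil matches the paper exactly. The issue is with your independent derivation of the ``in particular'' clause, where the geometric route to showing $\Gcd_R(G)<\infty$ has several real gaps for a general PID $R$. You assume the finite-index subgroup $G_0$ of type $FP(R)$ is torsion-free and has a finite-dimensional $K(G_0,1)$ whose universal cover carries a proper $G$-action. But $\cd_R(G_0)<\infty$ does not force $G_0$ to be torsion-free when $R\neq\bbZ$: a finite cyclic subgroup of order invertible in $R$ has $\cd_R=0$ (take $R=\bbQ$, say). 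Second, even for torsion-free $G_0$, $\cd_R(G_0)<\infty$ does not yield $\cd_{\bbZ}(G_0)<\infty$, which is what Eilenberg--Ganea requires to produce a finite-dimensional $K(G_0,1)$. Third, the universal cover of a $K(G_0,1)$ is only a $G_0$-space; extending to a finite-dimensional contractible proper $G$-CW complex requires Serre's construction (or a finite-dimensional model of the classifying space for proper actions), which is an extra step you do not invoke. The cleaner route is algebraic and is again part of \cite{emmanouil2018gorenstein}: show $\Gcd_R(G)\leq\Gcd_R(G_0)$ for finite-index $G_0$, and note $\Gcd_R(G_0)\leq\cd_R(G_0)<\infty$ because projectives are Gorenstein projective.

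Two further inaccuracies, less damaging but worth fixing. The Shapiro computation is off by an exponent: since $RG\cong\mathrm{Ind}_{G_0}^G RG_0\cong\mathrm{Coind}_{G_0}^G RG_0$ for finite index, Shapiro gives $H^k(G,RG)\cong H^k(G_0,RG_0)$, not $H^k(G_0,RG_0)^{[G:G_0]}$; try $G=\bbZ$, $G_0=2\bbZ$ to see the latter is false. The restriction isomorphism $H^*(G,M)\cong H^*(G_0,M\!\downarrow_{G_0})$ that you invoke only holds when $[G:G_0]$ is invertible in $R$. (Your conclusion about the top nonvanishing degree survives either way.) Finally, the claim that type $FP(R)$ ``passes from $G_0$ to $G$ by inducing a resolution'' is not right as stated: the induced complex resolves $R[G/G_0]$, not $R$. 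The fact that $FP_n(R)$ is inherited by a finite-index overgroup is true but has a different proof (Brown VIII.5.1, or via the quasi-isometry invariance in Proposition~\ref{prop:geomcharoffiniteness}).
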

As we observe in Remark \ref{rem:ccdvsgcd},  these are the only properties of Gorenstein cohomological dimension that we will make use in the proof of Theorem \ref{thm:main_technical}. In particular, no knowledge of Gorenstein homological algebra will be needed in what follows.

We deduce the following from Propositions \ref{prop:coarse cohom vs cohom} and \ref{prop:gorenstein}:
\begin{cor}
If $G$ and $G'$ are quasi-isometric groups of type $FP_\infty(R)$ with $\Gcd_R(G),\Gcd_R(G')<\infty$, then $\Gcd_R(G)=\Gcd_R(G')$.
\end{cor}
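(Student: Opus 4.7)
The statement follows almost immediately by chaining the two propositions cited just before it, together with the quasi-isometry invariance of coarse cohomology stated in Proposition \ref{prop:coarse cohom vs cohom}. The plan is to express both Gorenstein cohomological dimensions as the top non-vanishing degree of the appropriate group cohomology with module coefficients, identify that cohomology with coarse cohomology, and then appeal to quasi-isometry invariance.

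More concretely, I would first apply part (1) of Proposition \ref{prop:gorenstein} to both $G$ and $G'$. Since each group is of type $FP_\infty(R)$ and has finite Gorenstein cohomological dimension over the PID $R$, this gives
\[
\Gcd_R(G)=\max\{n\mid H^n(G,RG)\neq 0\}, \qquad \Gcd_R(G')=\max\{n\mid H^n(G',RG')\neq 0\}.
\]
Note in particular that both these maxima are finite, so the quantities $H^n(G,RG)$ and $H^n(G',RG')$ vanish for all sufficiently large $n$.

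Next, I would invoke Proposition \ref{prop:coarse cohom vs cohom}: since $G$ and $G'$ are of type $FP_\infty(R)$, the modules $H^n(G,RG)$ and $H^n(G',RG')$ are quasi-isometry invariants of $G$ and $G'$ (being naturally isomorphic to $\coarse^n(G;R)$ and $\coarse^n(G';R)$ respectively, in every degree $n$). The quasi-isometry $G\simeq G'$ therefore induces $R$-module isomorphisms $H^n(G,RG)\cong H^n(G',RG')$ for every $n\geq 0$. Comparing the two expressions from the previous paragraph then yields $\Gcd_R(G)=\Gcd_R(G')$.

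There is no serious obstacle here; the only thing one needs to check is that the hypotheses of the cited propositions are satisfied for both groups, which is immediate from the assumptions. If anything, the mild technicality is just recording that type $FP_\infty(R)$ is itself a quasi-isometry invariant (so that Proposition \ref{prop:coarse cohom vs cohom} applies symmetrically to $G$ and $G'$), but this is already implicit in the geometric characterisation of Proposition \ref{prop:geomcharoffiniteness}.
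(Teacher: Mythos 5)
Your argument is exactly what the paper intends: the paper simply states that the corollary is deduced from Propositions \ref{prop:coarse cohom vs cohom} and \ref{prop:gorenstein}, and your chaining of Proposition \ref{prop:gorenstein}(1) with the quasi-isometry invariance of $H^n(-,R\,\cdot)$ from Proposition \ref{prop:coarse cohom vs cohom} is precisely that deduction. (The aside about quasi-isometry invariance of type $FP_\infty(R)$ is not even needed here, since both groups are assumed to have that property.)
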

It is likely that the $FP_\infty(R)$ condition can be removed using the techniques of Sauer \cite{sauer2006cdqi}.
We now state some more properties of Gorenstein cohomological dimension that might be of further interest to geometric group theorists.
\begin{prop}
If $G$ is either a Gromov hyperbolic or a CAT(0) group,  then $\Gcd_R(G)<\infty$ for any PID $R$.
\end{prop}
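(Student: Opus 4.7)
The plan is to apply Proposition \ref{prop:CW complex -> finite gcd} in both cases: it suffices to produce, for each class of groups, a proper cellular action of $G$ on a contractible finite-dimensional CW complex.

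For the hyperbolic case, I would equip $G$ with the word metric arising from a finite generating set and consider the Rips complex $P_r(G)$. By a classical theorem of Rips, $P_r(G)$ is contractible for $r$ sufficiently large compared to the hyperbolicity constant. Since $G$ has bounded geometry, $P_r(G)$ is a locally finite, finite-dimensional simplicial complex on which $G$ acts cocompactly, and vertex stabilisers are trivial so the action is a fortiori proper and cellular. Proposition \ref{prop:CW complex -> finite gcd} then yields $\Gcd_R(G)\leqslant \dim P_r(G)<\infty$.

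For the CAT(0) case, $G$ acts properly cocompactly by isometries on a CAT(0) space $X$. The space $X$ is contractible but need not carry a CW structure, so I would replace it by a $G$-equivariant nerve. Choose a finite set of basepoints $\{x_1,\dots,x_k\}$ whose $G$-orbits form a net in $X$, and for $r$ large enough that the closed balls $\{B(gx_i,r)\mid g\in G,\ 1\leqslant i\leqslant k\}$ cover $X$, let $N$ denote the nerve of this cover. Because closed balls in a CAT(0) space are convex, and intersections of convex sets are convex and hence contractible, the nerve theorem provides a $G$-equivariant homotopy equivalence $N\simeq X$, so $N$ is contractible. Cocompactness of the $G$-action together with bounded geometry of the orbit ensures $N$ is locally finite and finite-dimensional, and the induced $G$-action on $N$ is proper and cellular. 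Proposition \ref{prop:CW complex -> finite gcd} again gives $\Gcd_R(G)<\infty$.

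The main obstacle is the CAT(0) case: in the hyperbolic setting the Rips complex provides a ready-made finite-dimensional CW model, but for a general CAT(0) group one must convert an arbitrary CAT(0) space into a finite-dimensional CW complex while preserving both the $G$-action and the homotopy type. The convexity of balls in CAT(0) spaces is the decisive geometric input enabling the nerve construction above; without it, one would have to work harder, for instance by invoking finite topological dimension of cocompact CAT(0) spaces together with an equivariant simplicial approximation.
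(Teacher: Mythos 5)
Your proof is correct and follows essentially the same strategy as the paper: the Rips complex for the hyperbolic case, a contractible finite-dimensional simplicial complex admitting a proper cocompact $G$-action for the CAT(0) case, and then an appeal to Proposition \ref{prop:CW complex -> finite gcd}. The only difference is that for the CAT(0) case the paper simply cites Remark III.$\Gamma$.3.27 of Bridson--Haefliger, whereas you reconstruct that result via the nerve of a cover by convex balls, which is the standard argument behind the cited remark.
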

\begin{proof}
If $G$ is hyperbolic,  then the Rips complex $P_r(G)$ is contractible for $r$ sufficiently large. If $G$ is CAT(0), it follows from Remark III.$\Gamma$.3.27 of \cite{bridson1999metric} that $G$ acts properly and cocompactly on some finite-dimensional simplicial complex.  In both cases, Proposition \ref{prop:CW complex -> finite gcd} ensures that $\gcd_R(G)<\infty$.
\end{proof}

The $\cZ$-boundary was first defined by Bestvina and later generalised by Dranishnikov \cite{bestvina1996zboundary, dranishnikov2006bestvinamess}. The $\cZ$-boundary of a hyperbolic group is simply its Gromov boundary, and the $\cZ$-boundary of a CAT(0) group $G$ is a visual boundary $\partial X$ of a CAT(0) space $X$ admitting a proper cocompact $G$-action. 
Gorenstein cohomological dimension has a geometric interpretation for groups admitting a $\cZ$-boundary in the sense of \cite{dranishnikov2006bestvinamess}. If $Z$ is a topological space, let $\dim(Z)$ be the Lebesgue covering dimension of $X$ and let $\dim_R(Z)$ be the cohomological dimension of $Z$ with coefficients in $R$.
\begin{prop}
Suppose that $G$ is a group of finite Gorenstein cohomological dimension that admits  $\cZ$-boundary $Z$ in the sense of \cite{dranishnikov2006bestvinamess}. Then $\dim_R(Z)+1= \Gcd_R(G)$ and $\dim(Z)+1=\Gcd(G)$.
\end{prop}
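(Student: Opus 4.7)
The plan is to combine Dranishnikov's generalisation of the Bestvina--Mess formula \cite{dranishnikov2006bestvinamess} with Proposition \ref{prop:gorenstein}. A $\cZ$-structure on $G$ provides a proper, cocompact, cellular action of $G$ on a contractible, finite-dimensional space, so Proposition \ref{prop:CW complex -> finite gcd} gives $\Gcd_R(G) < \infty$, and the same action together with Proposition \ref{prop:geomcharoffiniteness} shows that $G$ is of type $FP_\infty(R)$. Hence the hypotheses of Proposition \ref{prop:gorenstein} are satisfied.

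The principal input is the Bestvina--Mess cohomological formula, extended by Dranishnikov from word-hyperbolic groups to arbitrary $\cZ$-boundaries: for every $n\geqslant 0$ there is a natural isomorphism
\[H^{n+1}(G; RG) \;\cong\; \check{H}^n(Z; R).\]
Combining this with the first assertion of Proposition \ref{prop:gorenstein}, we deduce
\[\Gcd_R(G) \;=\; \max\{n : H^n(G; RG) \neq 0\} \;=\; \max\{k : \check{H}^k(Z; R) \neq 0\} + 1.\]

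The main obstacle is to identify the integer $\max\{k : \check{H}^k(Z; R) \neq 0\}$ with $\dim_R(Z)$. For a general compact metric space these two quantities need not agree (consider, for example, a disc), so the $\cZ$-structure must play a genuine role here. The required equality is obtained by applying the \emph{relative} Bestvina--Mess formula to closed pairs $(Z, A)$, which identifies $\check{H}^n(Z, A; R)$ with a suitable equivariant cohomology of $G$; the excision-and-finiteness argument carried out by Dranishnikov in \cite{dranishnikov2006bestvinamess} then shows that nonvanishing of any relative cohomology in degree $n$ forces nonvanishing of the absolute Čech cohomology in the same degree. The integral statement $\dim(Z) + 1 = \Gcd(G)$ follows by specialising to $R = \bbZ$ and invoking Alexandroff's classical theorem that the Lebesgue covering dimension agrees with the integral cohomological dimension on finite-dimensional compact metric spaces, i.e.\ $\dim_\bbZ(Z) = \dim(Z)$.
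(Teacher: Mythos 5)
Your overall strategy is the same as the paper's: establish the Bestvina--Mess isomorphism $H^{n+1}(G;RG)\cong \check{H}^n(Z;R)$, invoke Dranishnikov's dimension result, and use the characterisation $\Gcd_R(G)=\max\{n : H^n(G,RG)\neq 0\}$ from Proposition \ref{prop:gorenstein}. The paper routes the first step through coarse cohomology ($H^{n+1}(G,RG)\cong \coarse^{n+1}(G;R)\cong H^{n+1}_c(X;R)\cong H^n(Z;R)$), while you cite the formula directly; this is a cosmetic difference. Your remarks about why the relative Čech cohomology must be brought in to compare $\max\{k: \check{H}^k(Z;R)\neq 0\}$ with $\dim_R(Z)$ correctly identify the substance of what Dranishnikov proves.

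There is, however, a genuine gap in the last step. Alexandroff's theorem gives $\dim_\bbZ(Z)=\dim(Z)$ only under the hypothesis that $Z$ is \emph{finite-dimensional} as a compact metric space, and you invoke it without establishing that hypothesis. A priori, with Dranishnikov's definition of $\cZ$-structure (which replaces Bestvina's ER with an AR), the boundary $Z$ could be infinite-dimensional in the covering sense while still having finite cohomological dimension, in which case $\dim(Z)=\infty$ and the integral identity $\dim(Z)+1=\Gcd(G)$ fails. The paper resolves this by citing Moran's theorem \cite{moran2016finite}, which establishes that $\cZ$-boundaries are finite-dimensional; your argument needs this input to be complete. (A minor additional point: you claim the $\cZ$-structure itself supplies a finite-dimensional contractible complex to feed into Proposition \ref{prop:CW complex -> finite gcd}, but for Dranishnikov's $\cZ$-structures finite-dimensionality of $\overline{X}\setminus Z$ is not immediate; fortunately the proposition already assumes $\Gcd_R(G)<\infty$, so this derivation is unnecessary.)
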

\begin{proof}
Suppose $(\overline{X},Z)$ is a $\cZ$-structure on $G$ in the sense of \cite{dranishnikov2006bestvinamess}. Since $\overline{X}$ is contractible,  the long exact sequence in cohomology gives $H^{n+1}_c(X;R)\cong H^{n}(Z;R)$ (see \cite[Proposition 1.5]{bestvina1996zboundary}). As $G$ is quasi-isometric to $X$, $H^{n+1}(G,RG)\cong\coarse^{n+1}(G;R)\cong H^{n}(Z;R)$ as $R$-modules. (This argument is used in the proof of \cite[Corollary 2]{dranishnikov2006bestvinamess}, noting $\coarse^{n+1}(G;R)$ is isomorphic to Roe's coarse cohomology as mentioned in \cite[Appendix B]{margolis2018quasi}.)

It now follows from the main result of \cite{dranishnikov2006bestvinamess} and Proposition \ref{prop:gorenstein} that $\dim_R(Z)+1= \Gcd_R(G)$ for any PID $R$. It was shown in \cite{moran2016finite} that $Z$ is finite dimensional, so $\dim(Z)=\dim_\bbZ(Z)$.  Thus $\dim(Z)+1= \Gcd(G)$.
\end{proof}

\section{Cohomology of coarse bundles}\label{sec:coarse bundles}
There have been several adaptations of the theory of fibre bundles and fibrations to the setting of metric spaces, see \cite{farbmosher2000abelianbycyclic}, \cite{kapovich2005coarse}, \cite{whyte2010coarse} and \cite{mj12bundles} for more information.
The following definition is essentially the same as that used in \cite{whyte2010coarse}:
\begin{defn}[\cite{margolis2019almostnormal}]
Let $X$, $F$ and $B$ be bounded geometry, quasi-geodesic metric spaces. We say that $p:X\rightarrow B$ is a \emph{coarse bundle} with fibre $F$ if there exist constants $K\geqslant 1, A,E\geqslant 0$ such that the following hold:
\begin{enumerate}
\item $d(p(x),p(x'))\leqslant K d(x,x')+A$ for all $x,x'\in X$.
\item Let $D_b\coloneqq p^{-1}(N_E(b))$ for all $b\in B$. Then there is an $(\eta,\phi)$-coarse embedding $s_b:F\rightarrow X$ such that $d_{\Haus}(D_b, \im({s_b}))\leqslant A$, where $\eta$ and $\phi$ can be chosen independently of $b$.
\item $d_{\Haus}(D_b,D_{b'})\leqslant Kd(b,b')+A$ for all $b,b'\in B$.
\end{enumerate}
We say that each $D_b$ is a \emph{fibre} of $X$.
\end{defn}

Suppose  $G$ is a finitely generated group equipped with the word metric with respect to a finite generating set, and let $H\alnorm G$. We define a metric on $G/H$ by $d(gH,kH)\coloneqq d_{\Haus}(gH,kH)$ for all $gH,kH\in G/H$. The resulting metric space $G/H$ is called the \emph{quotient space}. The following is shown in \cite{margolis2019almostnormal}; see also \cite{kronmoller08roughcayley}, \cite{whyte2010coarse} and \cite{connermihalik2014}.
\begin{prop}
Let $G$ and $H$ be as above. Then the quotient space $G/H$ is well-defined up to quasi-isometry, and is a bounded geometry, quasi-geodesic metric space. Moreover, the  quotient map $p:G\rightarrow G/H$ given by $g\mapsto gH$ is a coarse bundle with fibre $H$.
\end{prop}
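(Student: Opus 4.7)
The plan is to verify three assertions in turn: (i) the Hausdorff-distance metric on $G/H$ is well-defined up to quasi-isometry, (ii) $G/H$ is of bounded geometry and quasi-geodesic, and (iii) $p$ is a coarse bundle with fibre $H$. The main obstacle is showing that $d_{\Haus}(gH, kH) < \infty$ for all $g, k \in G$, which is where the hypothesis $H \alnorm G$ is used essentially; the remaining steps are largely formal.

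For (i), by left-invariance of the word metric it suffices to check $d_{\Haus}(H, gH) < \infty$ for each $g \in G$. Set $K := H \cap gHg^{-1}$; almost normality gives $[H : K] < \infty$ and $[gHg^{-1} : K] < \infty$, so both $H$ and $gHg^{-1}$ are finite disjoint unions of right cosets $K h_j$ of $K$. Since $d(k h_j, k) = |h_j|$ for every $k \in K$, each such right coset lies in a bounded neighbourhood of $K$, and hence $d_{\Haus}(H, gHg^{-1}) < \infty$. Since $gH = (gHg^{-1}) g$ and right multiplication by $g$ displaces every point of $G$ by exactly $|g|$, we also have $d_{\Haus}(gHg^{-1}, gH) \leq |g|$, and the triangle inequality gives $d_{\Haus}(H, gH) < \infty$. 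Well-definedness up to quasi-isometry then follows because two finite generating sets give bi-Lipschitz equivalent word metrics on $G$, which in turn induce bi-Lipschitz equivalent Hausdorff-distance metrics on $G/H$.

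For (ii), since $d_{\Haus}(H, sH) < \infty$ for every generator $s$ by (i), the maximum $C$ over the finite generating set gives $d_{\Haus}(gH, gsH) \leq C$ for all $g$, so $p$ is $C$-Lipschitz. Bounded geometry follows from a coset count: $d_{\Haus}(kH, gH) \leq r$ forces $k \in g H B_r$, and a direct computation shows the number of distinct left $H$-cosets represented in $gHB_r$ equals $\sum_{b \in B_r}[H : H \cap bHb^{-1}]$, which is finite by almost normality. For quasi-geodesicity, consider the graph $\Gamma$ with vertex set $G/H$ and edges $\{gH, gsH\}$ for each $g \in G$ and generator $s$; it is locally finite by bounded geometry and connected since $G$ is. Its path metric is bi-Lipschitz equivalent to $d_{\Haus}$: the Lipschitz bound gives $d_{\Haus} \leq C \cdot d_{\Gamma}$, and conversely, a Hausdorff bound $d_{\Haus}(gH, kH) \leq r$ is realised by a pair $g, kh' \in G$ with $d(g, kh') \leq r$, yielding $d_\Gamma(gH, kH) \leq r$ by projecting a Cayley graph geodesic.

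For (iii), the Lipschitz bound is condition (1). For (2), fix any constant $E > 0$; for $b = gH$, the fibre $D_b$ is the union of cosets $kH$ with $d_{\Haus}(kH, gH) \leq E$, each contained in $N_E(gH)$, so $D_b \subseteq N_E(gH)$ while $gH \subseteq D_b$ trivially. The section $s_b : H \to G$, $h \mapsto gh$, is an isometric embedding with image exactly $gH$; hence $d_{\Haus}(D_b, \im s_b) \leq E$, and the distortion functions of $s_b$ are both the identity, independent of $b$. Condition (3) follows from the triangle inequality, giving $d_{\Haus}(D_b, D_{b'}) \leq 2E + d(b, b')$.
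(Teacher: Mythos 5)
The paper does not actually prove this proposition---it is cited from \cite{margolis2019almostnormal} (with pointers to \cite{kronmoller08roughcayley}, \cite{whyte2010coarse}, \cite{connermihalik2014})---so there is no in-text argument to compare against; your direct verification from the definitions is the natural route and is essentially correct. A few points of precision. In the bounded-geometry step, the number of left $H$-cosets meeting $gHB_r$ is bounded \emph{above} by $\sum_{b\in B_r}[H:H\cap bHb^{-1}]$ rather than equal to it (distinct $b\in B_r$ can yield the same coset), but the uniform finite bound is all that is needed. In the coarse-bundle step, the section $s_b:h\mapsto gh$ is an isometric embedding only if the fibre $H$ carries the subspace metric inherited from $G$; if $H$ is given its own word metric (as the phrase ``coarse bundle with fibre $H$'' more naturally suggests), then $s_b$ is an $(\eta,\phi)$-coarse embedding whose distortion functions equal those of the inclusion $H\hookrightarrow G$ and are therefore independent of $b$, which is exactly what condition (2) requires---so the conclusion stands either way, but the phrasing should be adjusted. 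Finally, the proposition (as stated) implicitly assumes $H$ is finitely generated: this is needed for $H$ to be a bounded-geometry quasi-geodesic space as the definition of coarse bundle requires, and for the inclusion $H\hookrightarrow G$ to be a coarse embedding. This is harmless in the paper, where the relevant subgroups are always at least $FP_\infty$, but it is worth making explicit.
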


In order to apply coarse topological methods to the quotient space, we need to show it is coarsely uniformly acyclic. Fortunately, this can easily be done by applying Brown's criterion.

An \emph{$n$-good $G$-CW complex over $R$} is a CW complex $X$ admitting a cellular $G$-action such that:
\begin{enumerate}
\item $\widetilde H_k(X;R)=0$ for $k<n$;
\item for $0\leq p\leq n$, the stabilizer of any $p$-cell of $X$ is of type $FP_{n-p}(R)$.
\end{enumerate}
A \emph{filtration}  of $X$ is a nested sequence $X_1\subseteq X_2 \subseteq \dots $ of $G$-invariant subcomplexes of $X$ such that $X=\cup_{i\in \bbN} X_i$. We say a filtration $(X_i)$ has \emph{finite $n$-type} if every $X_i$ has finitely many $G$-orbits of $p$-cells for $p\leqslant n$. Moreover, a filtration is said to be \emph{essentially $(n-1)$-acyclic over $R$} if for every $k<n$ and $i$, there is a $j\geq i$ such that the map $\widetilde H_k(X_i;R)\rightarrow \widetilde H_k(X_j;R)$, induced by inclusion, is trivial. The following is a simplified version of Brown's criterion.

\begin{thm}[Brown's Criterion, \cite{brown1987finiteness}]\label{thm:browncrit}
Suppose $X$ is an $n$-good $G$-CW complex over $R$ admitting a finite $n$-type filtration $(X_i)$. This filtration is essentially $(n-1)$-acyclic over $R$ if and only if $G$ is of type $FP_n(R)$.
\end{thm}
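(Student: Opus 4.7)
The plan is a double complex argument relating the cellular chain complex of $X$ to partial projective resolutions of the cell stabilizers. As an $RG$-module complex, $C_\bullet := C_\bullet(X;R)$ decomposes as $C_p \cong \bigoplus_{\sigma \in E_p} R[G/G_\sigma]$, where $E_p$ is a set of $G$-orbit representatives of $p$-cells of $X$ and $G_\sigma$ is the stabilizer of $\sigma$. Since $X$ is $n$-good, $C_\bullet \to R$ is exact through degree $n-1$. Because each $G_\sigma$ is of type $FP_{n-p}(R)$, one can choose a partial resolution $P^\sigma_\bullet \to R[G/G_\sigma]$ of length $n-p$ by finitely generated projective $RG$-modules. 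Assembling gives a double complex $D_{p,q} := \bigoplus_{\sigma \in E_p} P^\sigma_q$ in the range $p+q \leq n$, whose total complex $T_\bullet$ consists of projective $RG$-modules and augments to $R$.

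For the $(\Leftarrow)$ direction, I would restrict the construction to each filtration stage $X_i$. Since $(X_i)$ has finite $n$-type, only finitely many orbit representatives of $p$-cells appear in $X_i$ for $p \leq n$, so the associated total complex $T^{(i)}_\bullet$ is finitely generated in each degree $\leq n$. Essential $(n-1)$-acyclicity guarantees that for each $k < n$ and each $i$, the map $\widetilde H_k(X_i;R) \to \widetilde H_k(X_j;R)$ is trivial for some $j \geq i$; via the spectral sequence of the filtered double complex, together with exactness of direct limits of $R$-modules, this translates into exactness of the colimit total complex through degree $n-1$. Extracting a genuine partial resolution of $R$ of length $n$ by finitely generated projectives from a suitably cofinal sequence of stages then gives that $G$ is of type $FP_n(R)$.

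For $(\Rightarrow)$, assume $G$ is of type $FP_n(R)$ and choose a partial resolution $Q_\bullet \to R$ of length $n$ by finitely generated projective $RG$-modules. The equivariant comparison theorem furnishes a chain map $f : Q_\bullet \to C_\bullet$ lifting the identity on $R$ together with a chain homotopy; finite generation of each $Q_k$ forces $f$ to factor through $C_\bullet(X_{i_0})$ for some $i_0$. A standard diagram chase then shows that for $i \geq i_0$ and $k < n$, any cycle in $\widetilde H_k(X_i;R)$ is killed in $\widetilde H_k(X_j;R)$ for sufficiently large $j$, yielding essential $(n-1)$-acyclicity.

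The hard part will be the $(\Leftarrow)$ direction: one must carefully manage the spectral sequence of the filtered double complex and verify that essential acyclicity at each stage of the filtration assembles into exactness at the level of the colimit total complex. This is where the stabilizer hypothesis ``$G_\sigma$ is of type $FP_{n-p}(R)$'' enters in precisely the correct bidegree range $p+q \leq n$, and where the finite $n$-type condition on the filtration is essential to keep the projective modules finitely generated in each degree.
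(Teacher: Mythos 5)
The paper does not prove this statement: it is Brown's criterion, cited as a known result from \cite{brown1987finiteness}, and the paper uses only the implication ``$G$ of type $FP_n(R)$ $\Rightarrow$ the filtration is essentially $(n-1)$-acyclic'' (in the proof of Proposition \ref{prop:acyclicity of quotient space}). So there is no internal proof to compare against, and your proposal should be judged on its own merits. Your double-complex framework --- induce up partial projective $RG$-resolutions $P^\sigma_\bullet \to R[G/G_\sigma]$ of length $n-p$ over orbit representatives $\sigma$ of $p$-cells, assemble the total complex $T_\bullet$, which is a complex of projective $RG$-modules augmenting to $R$ and $(n-1)$-acyclic --- is indeed the standard one and is the skeleton of Brown's argument.

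The serious gap is exactly where you flag it, and flagging it does not fill it. In the direction ``essential acyclicity $\Rightarrow$ $FP_n$'', the colimit $T_\bullet = \varinjlim T^{(i)}_\bullet$ is $(n-1)$-acyclic but \emph{not} finitely generated, while each $T^{(i)}_\bullet$ is finitely generated through degree $n$ but \emph{not} acyclic. There is no ``exactness of direct limits'' argument that lets you \emph{extract} a finitely generated projective partial resolution from this data; in particular, $RG$ is not Noetherian, so you cannot even conclude that cycle submodules of the $T^{(i)}_k$ are finitely generated. The missing ingredient is Brown's homological algebra lemma on essentially trivial directed systems of finitely generated projective chain complexes (roughly: if a complex $T_\bullet$ of projectives with $H_0=R$ and vanishing $\widetilde H_k$ for $k<n$ is a directed union of subcomplexes that are finitely generated through degree $n$ and whose reduced homology forms an essentially trivial directed system in degrees $<n$, then $G$ is $FP_n$), whose proof goes through a characterization of $FP_n$ via the interaction of $H_*(G;-)$ with direct products rather than by literally splicing together a finite resolution. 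Without stating and using that lemma, the reduction is not a proof. A smaller point on the converse direction: you write the comparison map as $f:Q_\bullet\to C_\bullet$, but the diagram chase you gesture at needs a chain map \emph{back} to $Q_\bullet$, and $C_\bullet$ is not a complex of projectives, so no such lift exists; you must route everything through the total complex $T_\bullet$ you built (which is projective), take both $u:Q_\bullet\to T_\bullet$ and $v:T_\bullet\to Q_\bullet$ with a homotopy $uv\simeq\mathrm{id}$ through degree $n-1$, and use finite $n$-type of the filtration to make $u$ and the homotopy factor through some $T^{(j)}_{\leq n}$, which is what kills $\widetilde H_k(T^{(i)})\cong\widetilde H_k(X_i)$ in $\widetilde H_k(X_j)$ for $k<n$.
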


\begin{prop}\label{prop:acyclicity of quotient space}
Let $G$ be a group of type $FP_n(R)$ containing an almost normal subgroup $H$ of type $FP_n(R)$. Then the quotient space $G/H$ is coarsely uniformly $(n-1)$-acyclic over $R$.
\end{prop}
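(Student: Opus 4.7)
The plan is to apply Brown's criterion (Theorem \ref{thm:browncrit}) to the $G$-action on a Rips-complex-like simplicial complex built on $G/H$. Let $X$ be the simplicial complex whose vertex set is $G/H$ and in which every finite subset of vertices spans a simplex, filtered by $X_i := P_i(G/H)$ for $i \geq 0$. Then $G$ acts simplicially on $X$ by left multiplication on cosets, and $X = \bigcup_i X_i$. Since $G$ acts transitively (in particular cocompactly) on $G/H$ and $G/H$ has bounded geometry, Remark \ref{rem:simplifyuniformity} reduces the proposition to showing that the filtration $(X_i)$ of $X$ is essentially $(n-1)$-acyclic over $R$, which is exactly the kind of conclusion Brown's criterion delivers.

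I would verify the three hypotheses needed to invoke Brown's criterion, so that its conclusion, combined with the assumption that $G$ is of type $FP_n(R)$, yields the desired essential acyclicity. First, $X$ is contractible: it is the full simplex on the set $G/H$, whose augmented cellular chain complex admits a standard contracting homotopy obtained by coning every simplex off to a fixed base vertex. Second, the stabilizer of a $p$-simplex $\{g_0 H, \ldots, g_p H\}$ is the intersection $\bigcap_{i=0}^{p} g_i H g_i^{-1}$; by the almost normality hypothesis $H \alnorm G$, each conjugate $g_i H g_i^{-1}$ is commensurable to $H$, so the finite intersection is also commensurable to $H$ (a finite intersection of subgroups each commensurable to $H$ remains commensurable to $H$), and is therefore of type $FP_n(R)$ --- in particular of type $FP_{n-p}(R)$ for $0 \leq p \leq n$. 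Thus $X$ is an $n$-good $G$-CW complex over $R$. Third, because the $G$-action on $G/H$ is transitive and $G/H$ has bounded geometry, each $X_i$ contains only finitely many $G$-orbits of $p$-simplices for every $p$, so the filtration $(X_i)$ has finite $n$-type.

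With all the hypotheses in place, Brown's criterion asserts that $(X_i)$ is essentially $(n-1)$-acyclic over $R$ if and only if $G$ is of type $FP_n(R)$, which is true by assumption. Applying Remark \ref{rem:simplifyuniformity} then completes the proof. The only non-routine step is the stabilizer computation, which relies crucially on almost normality: without it, conjugates $g H g^{-1}$ need not be commensurable to $H$, and the intersections would no longer inherit the type $FP_n(R)$ property from $H$. There is otherwise no substantive obstacle --- the argument is essentially a matter of assembling the hypotheses correctly.
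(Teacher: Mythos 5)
Your proof is correct and follows essentially the same route as the paper: both apply Brown's criterion to the infinite-dimensional simplex $P_\infty(G/H)$ filtered by the Rips complexes $P_i(G/H)$, using almost normality to control simplex stabilizers, and then invoke Remark~\ref{rem:simplifyuniformity}. One tiny caveat: $\bigcap_{i=0}^{p} g_i H g_i^{-1}$ is the pointwise stabilizer of the simplex rather than the cell (setwise) stabilizer, but the two are commensurable, so the $FP_{n-p}(R)$ conclusion is unaffected.
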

\begin{proof}
Consider the infinite-dimensional simplex $P_\infty(G/H)$, whose vertices are all the left $H$-cosets. The stabilizer of each finite face $\{g_0H,\dots, g_mH\}$ is commensurable to $H$, so is of type $FP_n(R)$. As $P_\infty(G/H)$ is contractible,  $P_\infty(G/H)$ is an $n$-good $G$-CW complex. The filtration $(P_i(G/H))$ is of finite $n$-type, since $G/H$ has bounded geometry. As $G$ is of type $FP_n(R)$, Theorem \ref{thm:browncrit} ensures  $(P_i(G/H))$ is essentially $(n-1)$-acyclic over $R$. Since $G/H$ is proper and admits a cobounded $G$-action, Remark \ref{rem:simplifyuniformity} implies that $G/H$ is coarsely uniformly $(n-1)$-acyclic over $R$.
\end{proof}

The remainder of this section is devoted to a proof of the following:

\begin{thm}[A K\"unneth theorem for coarse bundles]\label{thm:coarse kunneth}
Let $R$ be a PID. Suppose $p:X\rightarrow B$ is a fibre bundle with fibre $F$ such that $F$ and $B$ are  coarsely uniformly acyclic over $R$. Then $X$ is coarsely uniformly acyclic over $R$ and for every $k\in \bbN$ there is a split short exact sequence of $R$-modules 
\begin{align*}
0&\rightarrow\bigoplus_{i+j=k}\coarse^i(F;R)\otimes_R \coarse^j(B;R)\rightarrow \coarse^k(X;R)\\&\rightarrow \bigoplus_{i+j=k+1}\tor^R_1(\coarse^i(F;R), \coarse^j(B;R))\rightarrow 1.
\end{align*}
\end{thm}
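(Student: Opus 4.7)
The strategy is to construct a uniformly acyclic metric complex on $X$ that is literally a tensor product of metric complexes on $F$ and $B$, assembled inside $X$ by means of the coarse bundle sections, and then to apply the algebraic K\"unneth formula to its compactly supported cochain complex. A pure K\"unneth statement (rather than a Serre-type spectral sequence) is available in the coarse world because a coarse bundle admits no monodromy: provided that sections $s_b$ are chosen coherently, the maps $s_b$ and $s_{b'}$ remain uniformly close whenever $d_B(b,b')$ is bounded, so there is no fundamental-group-of-base action on fibre cohomology to worry about. Producing this coherent family is the main technical obstacle, since bundle condition~(3) only yields Hausdorff closeness of the fibres $D_b$ and not pointwise closeness of the $s_b$; I would arrange it by a greedy inductive construction along a net $\mathcal{N}\subseteq B$, adjusting each newly chosen $s_b$ by a bounded amount to agree, up to bounded error, with the already-chosen neighbours.

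Concretely, fix uniformly $n$-acyclic metric complexes $(F,\bC_\bullet^F,\Sigma_\bullet^F,p_\bullet^F)$ and $(B,\bC_\bullet^B,\Sigma_\bullet^B,p_\bullet^B)$ for each $n$, as supplied by the metric-complex characterisation of coarse uniform acyclicity, and arrange that $p_\bullet^B(\Sigma_\bullet^B)\subseteq \mathcal{N}$. Define a proper chain complex on $X$ by
\[\bC_n^X\;=\;\bigoplus_{i+j=n}\bC_i^F\otimes_R \bC_j^B,\]
with the tensor product boundary, standard basis $\{\sigma\otimes\tau : \sigma\in \Sigma_i^F,\; \tau\in \Sigma_j^B\}$, and control map $\sigma\otimes\tau\mapsto s_{p^B(\tau)}(p^F(\sigma))$. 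Verifying that $(X,\bC_\bullet^X)$ is a metric complex over $X$ reduces to three routine checks: finite type (from bounded geometry of $F$ and $B$, together with finite type of the factor complexes); surjectivity up to bounded error of the control map (from cobounded coverage of $X$ by the fibres $D_b$); and finite displacement of $\partial^F\otimes\mathrm{id}\pm \mathrm{id}\otimes\partial^B$ (the first summand uses that each $s_b$ is a uniform coarse embedding, and the second uses coherence of the sections).

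The next step is uniform acyclicity of $(X,\bC_\bullet^X)$. Any ball $N_r(x)\subseteq X$ centred at $x=s_{b_0}(f_0)$ is sandwiched between ``product regions'': the subcomplex $\bC_\bullet^X[N_r(x)]$ sits inside $\bC_\bullet^F[N_{r'}(f_0)]\otimes_R \bC_\bullet^B[N_{r'}(b_0)]$, which in turn sits inside $\bC_\bullet^X[N_{r''}(x)]$, for appropriate $r',r''$ depending only on $r$ (passing back and forth via the bundle map $p$ and the coherent sections). The classical K\"unneth formula for chain complexes over a PID (Proposition~\ref{prop:kunnethchain}), together with uniform acyclicity of the factor complexes, then kills homology under inclusion into a sufficiently larger product subcomplex, hence also within $\bC_\bullet^X$. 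Finally, because the standard basis of $\bC_n^X$ is literally the product of the factor bases, the compactly supported cochain complexes decompose as a genuine tensor product
\[\cHom_R(\bC_\bullet^X,R)\;\cong\;\cHom_R(\bC_\bullet^F,R)\otimes_R \cHom_R(\bC_\bullet^B,R),\]
and applying the algebraic K\"unneth theorem to this tensor product of free cochain complexes produces the desired split short exact sequence for $\coarse^\bullet(X;R)$.
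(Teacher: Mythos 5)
There is a genuine gap, and it is precisely at the point you flag as ``the main technical obstacle.'' You want to choose the sections $s_b:F\to X$ so that $s_b(f)$ and $s_{b'}(f)$ are uniformly close whenever $d_B(b,b')$ is bounded, while at the same time keeping the distortion functions $(\eta,\phi)$ of the $s_b$ independent of $b$, as the coarse bundle axioms require. These two demands are incompatible in general, and no greedy adjustment along a net can reconcile them. Concretely, take $G=BS(1,2)=\langle a,t\mid tat^{-1}=a^2\rangle$, $H=\langle a\rangle$, and the coarse bundle $G\to G/H$ with fibre $F=H\cong\bbZ$. The natural sections $s_{t^kH}(a^n)=t^ka^n$ are isometric embeddings but satisfy $d_G\bigl(s_{t^kH}(a^n),s_{t^{k+1}H}(a^n)\bigr)\asymp\log|n|$, which is unbounded. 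Forcing pointwise closeness requires $s_{t^{k+1}H}$ to ``halve'' the parametrisation relative to $s_{t^kH}$, so that after $k$ steps $s_{t^kH}(a^n)=t^ka^{\lfloor n/2^k\rfloor}$; but then $d_G\bigl(s_{t^kH}(a^n),s_{t^kH}(a^m)\bigr)\asymp\log|n-m|-k$, and no lower distortion function $\eta$ independent of $k$ can hold. The errors do not stay bounded; they compound along the ray $\{t^kH\}$ in the Bass--Serre tree. Since a coherent family of uniformly distorted sections does not exist, the literal tensor product $\bC_\bullet^F\otimes_R\bC_\bullet^B$ with the product boundary and your proposed control map does not have finite displacement, so it is not a metric complex over $X$, and the rest of the argument (the sandwiching by product regions and the clean dualisation) has nothing to stand on. Your opening remark that ``a coarse bundle admits no monodromy'' is also circular as stated: the absence of monodromy is asserted as a consequence of coherent sections, which is exactly what must be constructed and fails.

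The paper sidesteps this by never fixing a single reference complex for the fibre. Instead it takes a separate metric complex $\bD^b_\bullet$ over each fibre $D_b$, forms the graded module $\bE_\bullet$ with basis $\{\sigma\otimes\lambda : \sigma\in\Sigma_i,\ \lambda\in\Sigma^{p_i(\sigma)}_j\}$ (so $\lambda$ lives in the fibre complex over the base point of $\sigma$), and then constructs the boundary maps inductively by an acyclic-models argument (Lemma~\ref{lem:unif acyclic lemma}), interleaved with the construction of a proper chain map $g_\#^b:\bB_\bullet\otimes\bD^b_\bullet\to\bE_\bullet$ and its homotopy inverse $f_\#^b$. The resulting $\bE_\bullet$ is not literally a tensor product, but it is \emph{properly chain homotopy equivalent} to $\bB_\bullet\otimes\bD^b_\bullet$ for a fixed $b$, which is all that is needed to apply the algebraic K\"unneth formula. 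The correct ``no monodromy'' statement is that the transition chain maps $f^{b,b'}_\#$ between fibre complexes, together with their compositions around loops, are all proper chain homotopic to the identity; there genuinely is pointwise drift of the sections, but it is invisible in the proper chain homotopy category. If you want to repair your argument you will need to build in this homotopy-theoretic flexibility; the on-the-nose product decomposition is not available.
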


All the ideas needed to prove Theorem \ref{thm:coarse kunneth} are found in \cite[\S 11.5]{kapovich2005coarse}. Indeed, although Theorem \ref{thm:coarse kunneth} is not explicitly stated there, a slightly less general version of Theorem \ref{thm:coarse kunneth} is implicitly used in \cite{kapovich2005coarse}. Group theoretic analogues of  Theorem \ref{thm:coarse kunneth} when $H\vartriangleleft G$ are well-known, for instance see \cite[\S 17.3]{geoghegan2008topological}.

Let $(B,\bB_\bullet,\Sigma_\bullet,p_\bullet)$ be a uniformly acyclic metric complex with control space $B$.
For each $b\in B$, we fix an uniformly acyclic  metric complex $(D_b,\bD^b_\bullet,\Sigma^b_\bullet,p_\bullet^b)$  with control space $D_b$. The displacement and acyclicity constants associated to $\bD_\bullet^b$  can be chosen independently of $b$. Moreover, it can also be assumed that the metric complexes $(D_b,\bD^b_\bullet)$ have \emph{uniform finite type} independent of $b$, i.e.  $\sup_{b\in B, x\in D_b}\lvert(p_i^b)^{-1}(x)\rvert<\infty$ for every $i$. 

  For each $b,b'\in B$, let $f^{b,b'}$ be a closest point projection $D_b\rightarrow D_{b'}$, and let $f^{b,b'}_\#:\bD^b_\bullet\rightarrow \bD^{b'}_\bullet$ be a chain map with  finite displacement  over $f^{b,b'},$ where the displacement depends only on $d_B(b,b')$. We may assume  that when $b=b'$, $f^{b,b'}$ and $f_\#^{b,b'}$ are the identity maps.

Every $k$-chain in $\tau \in \bB_\bullet\bigotimes \oplus_{b\in B}\bD^b_\bullet$ can be written uniquely in the form $\tau=\Sigma_r n_r(\sigma_r\otimes \lambda_r)$, where $\sigma_r\in \Sigma_{i_r}$ and $\lambda_r\in \Sigma_{k-i_r}^{b_r}$ for some $b_r\in B$. We define 
\begin{equation}\label{eqn:support of chains}
\supp_X(\tau)\coloneqq \bigcup\{p_{k-i_r}^{b_r}(\lambda_r)\mid n_r\neq 0\}\subseteq X
\end{equation}
and 
\begin{equation}\label{eqn:B-support of chains}
\supp_B(\tau)\coloneqq \bigcup\{p_{i_r}(\sigma_r)\mid n_r\neq 0\}\subseteq B.
\end{equation}
 We define $\bE_{i,j}$ to be the  free module with basis \[T_{i,j}\coloneqq \{\sigma \otimes \lambda \mid  \sigma\in \Sigma_i, \lambda\in \Sigma^{b}_j, p_i(\sigma)=b\}, \] which we identify with the submodule of $\bB_i\bigotimes \oplus_{b\in B}\bD^b_j$ generated by $T_{i,j}$. We set $T_k\coloneqq \bigsqcup_{i+j=k}T_{i,j}$ and $\bE_k=\bigoplus_{i+j=k}\bE_{i,j}$ and define $q_k:T_k\rightarrow X$ by $q_k(\sigma\otimes \lambda)=p_{k-i}^{p_i(\sigma)}(\lambda)$ for every $\sigma\otimes \lambda\in T_{i,k-i}$. Since the metric complexes $(D_b,\bD^b_\bullet)$ have uniform finite type, each $(\bE_k,T_k,q_k)$ is a finite type free module over $X$.

It is not necessarily the case that $\bE_\bullet$ is a subcomplex of $\bB_\bullet\bigotimes \oplus_{b\in B}\bD^b_\bullet$. However, we can define boundary maps so that $\bE_\bullet$ is a chain complex.
\begin{lem}\label{lem:defchain complex}
For every $k\in \bbZ$, there exists a  boundary map $\partial:\bE_k\rightarrow \bE_{k-1}$ such that \[(X,\bE_\bullet,T_\bullet, q_\bullet)\] is a uniformly acyclic metric complex over $R$. Moreover, $\bE_\bullet$ is properly chain homotopic to $\bB_\bullet\otimes \bD_\bullet^b$ for any $b\in B$.
\end{lem}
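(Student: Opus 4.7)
The plan is a perturbation/twisted-tensor-product construction in the spirit of the Serre spectral sequence. For a basis element $\sigma\otimes\lambda\in T_{i,j}$ with $b=p_i(\sigma)$, I would expand $\partial\sigma=\sum_k n_k\sigma_k$ in $\bB_\bullet$, set $b_k=p_{i-1}(\sigma_k)$, and first define a zeroth-order boundary
\begin{equation*}
\partial^{(0)}(\sigma\otimes\lambda)=\sum_k n_k\bigl(\sigma_k\otimes f^{b,b_k}_\#(\lambda)\bigr)+(-1)^i\,\sigma\otimes\partial\lambda.
\end{equation*}
This splits as $d_h+d_v$, the horizontal part landing in $\bE_{i-1,j}$ and the vertical part in $\bE_{i,j-1}$. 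Both pieces have finite displacement over the identity of $X$: the vertical piece inherits its displacement from the uniform displacement of $\partial$ on the $\bD^b_\bullet$, and the horizontal piece is controlled by $K\,d_B(b,b_k)+A$ together with the displacement of $\partial$ on $\bB_\bullet$.

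Because each $f^{b,b'}_\#$ is a chain map, we have $d_v d_h+d_h d_v=0$ and $d_v^2=0$, so $(\partial^{(0)})^2=d_h^2$, which has bidegree $(-2,0)$. Since strict functoriality $f^{b',b''}_\# f^{b,b'}_\#=f^{b,b''}_\#$ fails only up to chain homotopy in the fibres, $d_h^2$ is a vertical boundary. Using uniform acyclicity of the fibres $\bD^b_\bullet$, with acyclicity function independent of $b$, I would inductively construct correction maps $\tilde d_r$ of bidegree $(-r,r-1)$ for $r\geq 2$ so that $d_v\tilde d_r$ cancels the previous-order defect modulo lower base-degrees, and set $\partial=\partial^{(0)}+\sum_{r\geq 2}\tilde d_r$. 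On any basis element of bidegree $(i,j)$ only the $\tilde d_r$ with $r\leq i$ contribute, so $\partial$ is well-defined, and $\partial^2=0$ by construction. To confirm $(X,\bE_\bullet,T_\bullet,q_\bullet)$ is a metric complex, I must check that the total displacement is uniformly bounded; the key point is that every simplex appearing in $\partial(\sigma\otimes\lambda)$ sits within a bounded $\bB_\bullet$-distance of $\sigma$ (controlled by the base displacement alone) and the fibre acyclicity constants are uniform, so the cumulative displacement is bounded independently of $(i,j)$. The augmentation and surjectivity of $q_0$ are inherited from $\bB_\bullet$ and the $\bD^b_\bullet$, using $X=\bigcup_{b\in B}D_b$.

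Uniform acyclicity of $\bE_\bullet$ I would deduce by a spectral-sequence-style argument filtered by base-degree: a cycle in $\bE_\bullet[N_r(x)]$ is killed by first using uniform fibre acyclicity to reduce to chains of lower vertical degree, then invoking uniform acyclicity of $\bB_\bullet$ to kill the horizontal cycles that remain, the enlarged radius being a function of $r$ and the base and fibre acyclicity functions together with the coarse bundle constants. For the proper chain homotopy equivalence $\bE_\bullet\simeq\bB_\bullet\otimes\bD^{b_0}_\bullet$ at a chosen $b_0\in B$, I would build explicit proper chain maps in both directions using $f^{b_0,b}_\#$ and $f^{b,b_0}_\#$, and then show they are chain homotopy inverses by an acyclic-models argument: any two proper augmentation-preserving chain maps between such complexes are properly chain homotopic, the homotopy being constructed inductively using uniform fibre acyclicity to control displacement. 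The main obstacle will be the bookkeeping of displacement bounds throughout the perturbation induction and the acyclic-model step, verifying that the corrections $\tilde d_r$ remain uniformly bounded in $r$ by exploiting that only simplices at bounded $\bB_\bullet$-distance from the initial support ever appear, so the cumulative horizontal displacement is controlled by the base displacement alone.
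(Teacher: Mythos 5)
Your proposal is essentially sound and reaches the same end by a recognisably different route. The paper builds the auxiliary chain maps $g^b_\#:\bB_\bullet\otimes\bD^b_\bullet\to\bE_\bullet$ and the boundary $\partial$ on $\bE_\bullet$ simultaneously (inducting on base degree, with Lemma \ref{lem:unif acyclic lemma} playing the role of a coarse acyclic-models lemma), then obtains the proper chain homotopy equivalence $\bE_\bullet\simeq\bB_\bullet\otimes\bD^b_\bullet$ first and \emph{transfers} uniform acyclicity across it via the ordinary K\"unneth formula. You instead run a homological-perturbation (twisted tensor product) construction: start from $\partial^{(0)}=d_h+d_v$, observe that $(\partial^{(0)})^2=d_h^2$ measures the failure of strict functoriality of the fibre transport maps $f^{b,b'}_\#$, and iteratively add corrections $\tilde d_r$ of bidegree $(-r,r-1)$ killed off by uniform fibre acyclicity. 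You then argue uniform acyclicity of $\bE_\bullet$ directly by a filtration/spectral-sequence argument rather than by transfer, and only afterwards produce the equivalence via acyclic models. Both strategies are powered by the same engine (uniform fibre acyclicity used as an acyclic-models device to construct null-homotopies with controlled displacement), so they are not wildly different, but the ordering and bookkeeping are genuinely distinct; the paper's ordering is somewhat cleaner because it never needs a direct coarse spectral sequence argument, whereas yours isolates the differential more explicitly.

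Two points to tighten. First, your assertion that ``$d_h^2$ is a vertical boundary'' is imprecise: $d_h^2$ is an operator of bidegree $(-2,0)$, and what you actually need is that it lies in the image of $[d_v,\,-]$ (as an operator), i.e.\ that there is $\tilde d_2$ of bidegree $(-2,1)$ with $d_v\tilde d_2+\tilde d_2 d_v=-d_h^2$ modulo lower base degree. This uses both that $\partial^2=0$ in $\bB_\bullet$ (so coefficients collapse) and that $f^{b',b''}_\#f^{b,b'}_\#$ is chain homotopic to $f^{b,b''}_\#$ with uniformly controlled displacement; you should say that the perturbation is built from those homotopies, which is where fibre acyclicity really enters. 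Second, the claim that the cumulative displacement is ``bounded independently of $(i,j)$'' is an overstatement: each application of the fibre acyclicity function enlarges the radius, so the displacement of $\partial$ on $\bE_{i,j}$ grows with $i$. This does no harm — the definitions of metric complex and uniform $n$-acyclicity only ask for per-degree finiteness and an acyclicity function that may depend on $n$ — but you should not assert uniformity across degrees. With these corrections the proposal is a valid alternative proof.
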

Before proving Lemma \ref{lem:defchain complex}, we explain how to deduce Theorem \ref{thm:coarse kunneth} from it. To do this, we require  the ordinary K\"unneth theorem for  chain complexes of $R$-modules, where $R$ is assumed to be a PID:
\begin{prop}[{\cite[Proposition I.0.8]{brown1982cohomology}}]\label{prop:kunnethchain}
Let $\bC_\bullet$ and $\bD_\bullet$ be chain complexes of free $R$-modules. Then there is a natural short exact sequence
\begin{align*}
0&\rightarrow\bigoplus_{i+j=k}H_i(\bC_\bullet)\otimes H_j(\bD_\bullet)\rightarrow H_k(\bC_\bullet\otimes \bD_\bullet)\\&\rightarrow \bigoplus_{i+j=k-1}\tor^R_1(H_i(\bC_\bullet), H_j(\bD_\bullet))\rightarrow 1
\end{align*}
that splits.
\end{prop}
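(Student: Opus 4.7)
The plan is to give the standard algebraic proof of the K\"unneth formula, exploiting the hypothesis that $R$ is a PID and $\bC_\bullet, \bD_\bullet$ are complexes of free modules. First I would observe that, since $R$ is a PID, every submodule of a free $R$-module is free; in particular the cycle modules $Z_i \subseteq \bC_i$ and the boundary modules $B_i = \partial(\bC_{i+1}) \subseteq Z_i$ are all free. Hence the short exact sequence $0 \to Z_i \to \bC_i \to B_{i-1} \to 0$ splits. Regarding $Z_\bullet$ and $B_\bullet$ as chain complexes with zero differential, this yields a short exact sequence of chain complexes
\[
0 \to Z_\bullet \to \bC_\bullet \to B_\bullet[-1] \to 0,
\]
where $B_\bullet[-1]$ denotes the shift with $B[-1]_i = B_{i-1}$.

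Next I would tensor this sequence over $R$ with $\bD_\bullet$. Because $B_\bullet[-1]$ consists of free (hence flat) $R$-modules, the resulting sequence
\[
0 \to Z_\bullet \otimes \bD_\bullet \to \bC_\bullet \otimes \bD_\bullet \to B_\bullet[-1] \otimes \bD_\bullet \to 0
\]
is still short exact, and it induces a long exact sequence in homology. Since $Z_\bullet$ and $B_\bullet$ have trivial differentials, a direct computation gives $H_k(Z_\bullet \otimes \bD_\bullet) \cong \bigoplus_i Z_i \otimes H_{k-i}(\bD_\bullet)$ and $H_k(B_\bullet[-1] \otimes \bD_\bullet) \cong \bigoplus_i B_{i-1} \otimes H_{k-i}(\bD_\bullet)$, and the connecting homomorphism is (up to sign) induced by the inclusion $\iota_i : B_i \hookrightarrow Z_i$ tensored with the identity on $H_*(\bD_\bullet)$. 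Extracting short exact sequences from this long exact sequence gives
\[
0 \to \operatorname{coker}(\iota \otimes \mathrm{id}) \to H_k(\bC_\bullet \otimes \bD_\bullet) \to \ker(\iota \otimes \mathrm{id}) \to 0.
\]

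To identify the outer terms with tensor and $\tor$, I would use that the sequence $0 \to B_i \to Z_i \to H_i(\bC_\bullet) \to 0$ is a free resolution of length one of $H_i(\bC_\bullet)$; tensoring with $H_j(\bD_\bullet)$ and taking the associated long exact sequence for $\tor$ (which terminates because the resolution has length one) yields
\[
0 \to \tor_1^R(H_i(\bC_\bullet), H_j(\bD_\bullet)) \to B_i \otimes H_j(\bD_\bullet) \xrightarrow{\iota \otimes \mathrm{id}} Z_i \otimes H_j(\bD_\bullet) \to H_i(\bC_\bullet) \otimes H_j(\bD_\bullet) \to 0.
\]
Summing over $i+j = k$ (resp.\ $i+j = k-1$) identifies the cokernel and kernel above with the asserted tensor and $\tor$ terms and produces the desired short exact sequence, whose naturality in $\bC_\bullet$ and $\bD_\bullet$ is a routine diagram chase.

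The main subtlety, and the step I expect to be the trickiest, is the splitting assertion. For this I would choose a splitting of $\bC_i = Z_i \oplus W_i$ with $W_i \cong B_{i-1}$ (available because $B_{i-1}$ is free), construct chain maps $Z_\bullet \to \bC_\bullet$ and $\bC_\bullet \to Z_\bullet$ that realize the direct-sum decomposition at the level of cycles, and use these to build a (non-canonical) splitting of the short exact sequence. Writing out the signs carefully in the tensor differential $\partial(\sigma \otimes \lambda) = \partial \sigma \otimes \lambda + (-1)^i \sigma \otimes \partial \lambda$ is where minor bookkeeping pitfalls can occur, but the argument is classical.
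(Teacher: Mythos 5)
Your argument is correct: it is the standard proof of the K\"unneth theorem over a PID (split off the cycle and boundary subcomplexes, which are free, tensor with $\bD_\bullet$, run the long exact sequence, and identify the connecting map with $\iota\otimes\mathrm{id}$ via the length-one free resolution $0\to B_i\to Z_i\to H_i(\bC_\bullet)\to 0$), and the non-natural splitting via the chosen projections $\bC_i\to Z_i$ is handled correctly. The paper gives no proof of its own here --- it simply cites Brown's book, where essentially this argument appears --- so there is nothing to compare beyond noting that you have reproduced the classical proof.
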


\begin{proof}[Proof of Theorem \ref{thm:coarse kunneth}]
As $(X,\bE_\bullet)$ is a uniformly acyclic metric complex over $R$, it follows from the definition of coarse cohomology that $\coarse^k(X;R)=H^k_c(\bE_\bullet)$. Since $\bE_\bullet$ is properly chain homotopic to $\bB_\bullet\otimes \bD^b_\bullet$ for some fixed $b\in B$, we see that $\coarse^k(X;R)\cong H^k_c(\bB_\bullet\otimes \bD^b_\bullet)$. We conclude by applying  Proposition \ref{prop:kunnethchain}, which can be done because the cohomology of a cochain complex is simply the homology of the associated chain complex with indices reversed.
\end{proof}
We now prove Lemma \ref{lem:defchain complex}.
  To define the boundary map on $\bE_\bullet$, we define a  chain map
 \[g^b_\#: \bB_\bullet\otimes \bD^b_\bullet\rightarrow \bE_\bullet  \] for every $b\in B$.    The boundary map on $\bE_\bullet$ will then be defined by 
 \[\partial(\sigma\otimes \lambda)=g^{p_i(\sigma)}_\#(\partial\sigma \otimes \lambda)+(-1)^i\sigma \otimes \partial\lambda\]
  for every $\sigma\otimes \lambda\in T_{i,j}$. This might seem rather circular, as the boundary map on $\bE_\bullet$  is defined in terms of  $g_\#^b$, whilst $g_\#^b$ is defined to be a chain map, which implies boundary maps of $\bE_\bullet$ have already been chosen! However, we will inductively define  $\partial$ and $g_\#^b$  on filtrations of $\bE_\bullet$ and $\bB_\bullet\otimes \bD^b_\bullet$ so this makes sense.

Whilst defining each $g_\#^b$,  we show that there is a constant $M=M_{i,j}\geqslant 0$ and a function  $\phi=\phi_{i,j}:\bbR_{\geqslant 0}\rightarrow \bbR_{\geqslant 0}$, both independent of $b$, such that  
\begin{align}\label{eqn:findisp of g}
\supp_X(g_\#^b(\sigma\otimes \lambda))&\subseteq N_{\phi(d_B(p_i(\sigma),b))}(p^b_j(\lambda))\\
\supp_B(g_\#^b(\sigma\otimes \lambda))&\subseteq N_{M}(p_i(\sigma))\label{eqn:finBdisp of g}
\end{align} for all  $\sigma\in \Sigma_i$ and $\lambda\in \Sigma^b_j$.

We proceed inductively,  defining a nested sequence of chain complexes $\bE^0_\bullet\leqslant \bE^1_\bullet\leqslant \dots $ with $\bE^k_\bullet=\oplus_{i\leqslant k}\bE_{i,\bullet-i}$. We show that
for each $\sigma\in \Sigma_i$ and $ \lambda\in \Sigma^b_j$, we have
\begin{equation}
\label{eqn:cycle_g}
g^b_\#(\sigma\otimes \lambda)-\sigma\otimes f_\#^{b,p_i(\sigma)}(\lambda)\in \bE^{i-1}_\bullet. 
\end{equation}

For the base case, we define the boundary map on $\bE^0_i=\bE_{0,i}$  by $\partial(\sigma\otimes \lambda)=\sigma\otimes \partial\lambda$.
For each $b\in B$, we define a  chain map  \[g^b_\#: [\bB_\bullet]_0\otimes \bD^b_\bullet\rightarrow \bE^0_\bullet  \] given by  $g_b(\sigma\otimes \gamma)=\sigma \otimes f^{b,p_0(\sigma)}_\#(\gamma)$. Conditions (\ref{eqn:findisp of g}), (\ref{eqn:finBdisp of g}) and  (\ref{eqn:cycle_g}) are automatically satisfied.

  We now assume that the chain map $g^b_\#$ has already been defined on  $[\bB_\bullet]_{k-1}\otimes \bD^b_\bullet$ and that  (\ref{eqn:findisp of g}), (\ref{eqn:finBdisp of g}) and (\ref{eqn:cycle_g}) are satisfied.
To extend  $g_\#^b$ we make use of the following lemma.
\begin{lem}\label{lem:unif acyclic lemma}
If $k$ is as above, then for every $j\in \bbZ$ there  exists a number $R=R_j$ and a function $\mu_j=\mu:\bbR_{\geqslant 0}\rightarrow \bbR_{\geqslant 0}$ such that the following holds.  Suppose $\tau$ is a  $j$-cycle of the form 
\[\tau=\sum_{\sigma\in \Sigma_k}\sigma\otimes \partial\lambda^\sigma+\sum_{i<k,\rho\in \Sigma_i}\rho\otimes \tau^\rho\] 
in either $\bE^k_\bullet$ or $[\bB_\bullet]_k\otimes \bD^b_\bullet$. Let $Y=\supp_X(\tau)\cup\bigcup_{\sigma\in \Sigma_k}p^{p_k(\sigma)}_{j+1-k}(\lambda^\sigma)$ and $Q\coloneqq \diam(Y)$. Then $\sigma$ is the boundary of a chain of the form 
\[\omega=\sum_{\sigma\in \Sigma_k}\sigma\otimes \lambda^\sigma+\sum_{i<k,\rho\in \Sigma_i}\rho\otimes \omega^\rho,\] 
where  $\supp_X(\omega)\subseteq N_{\mu(Q)}(Y)$ and  $\supp_B(\omega)\subseteq N_R(\supp_B(\tau))$.
\end{lem}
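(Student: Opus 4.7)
The plan is to proceed by induction on the base filtration degree $k$. The base case $k=0$ is immediate: for $\sigma\in\Sigma_0$ we have $\partial\sigma=0$, so the differential on $\bE_\bullet$ reduces to $\partial(\sigma\otimes\lambda)=\sigma\otimes\partial\lambda$, and the chain $\omega:=\sum_{\sigma\in\Sigma_0}\sigma\otimes\lambda^\sigma$ (with no lower-degree terms) satisfies $\partial\omega=\tau$; the support bounds $\supp_X(\omega)\subseteq Y$ and $\supp_B(\omega)\subseteq\supp_B(\tau)$ are immediate.

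For the inductive step $k\geq 1$, set $\omega_0:=(-1)^k\sum_{\sigma\in\Sigma_k}\sigma\otimes\lambda^\sigma$, with the sign chosen so that the $B$-degree-$k$ component of $\partial\omega_0$ cancels the top component of $\tau$. By property~(\ref{eqn:cycle_g}), the remaining horizontal contribution $(-1)^k\sum_\sigma g^{p_k(\sigma)}_\#(\partial\sigma\otimes\lambda^\sigma)$ lies in $\bE^{k-1}_\bullet$; hence $\tau':=\tau-\partial\omega_0$ is a $j$-cycle in $\bE^{k-1}_\bullet$ (or in $[\bB_\bullet]_{k-1}\otimes\bD^b_\bullet$ in the second case).

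The crucial observation is that the top $B$-degree component of $\tau'$, written as $\sum_{\rho\in\Sigma_{k-1}}\rho\otimes\nu^\rho$, consists of fibre cycles. Indeed, the identity $\partial\tau'=0$ restricted to $B$-degree $k-1$ receives only the vertical contribution $\pm\sum_\rho\rho\otimes\partial\nu^\rho$, since horizontal terms from $\tau'$ land in $B$-degrees at most $k-2$. By uniform acyclicity of $\bD^{p_{k-1}(\rho)}_\bullet$, with acyclicity function independent of basepoint, each $\nu^\rho$ bounds a chain $\mu^\rho$ whose support is controlled by that of $\nu^\rho$. Rewriting $\tau'=(-1)^{k-1}\sum_\rho\rho\otimes\partial\mu^\rho+\tau''$, where $\tau''$ involves only terms of $B$-degree strictly below $k-1$, places $\tau'$ in the exact form required to invoke the inductive hypothesis at level $k-1$, producing a bounding chain $\omega'$. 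Then $\omega:=\omega_0+\omega'$ satisfies $\partial\omega=\tau$ and has the required form.

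The principal obstacle is the careful accounting of support bounds through the induction. The $X$-support of $\omega_0$ lies in $Y$; applying $\partial$ enlarges it by amounts controlled by the uniform displacement of the boundary maps, together with the displacement of $g^{b,b'}_\#$, which is a function of $d_B(b,b')$ and hence bounded in terms of $Q$. The cycles $\nu^\rho$ inherit this enlarged $X$-support, and their fillings $\mu^\rho$ add a further enlargement by the uniform fibre acyclicity function applied to the diameter of $\supp(\nu^\rho)$. Composing these enlargements over the $k$ induction steps yields a composite function $\mu_j$ such that $\supp_X(\omega)\subseteq N_{\mu_j(Q)}(Y)$. The $B$-support bound $\supp_B(\omega)\subseteq N_{R_j}(\supp_B(\tau))$ follows analogously, and more directly, from~(\ref{eqn:finBdisp of g}) and the uniform finite $B$-displacement of the boundary maps of $\bB_\bullet$.
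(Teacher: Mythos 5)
Your proof is correct and follows essentially the same strategy as the paper's: an induction on the $B$-filtration degree $k$, with the inductive step amounting to subtracting $\partial\bigl(\pm\sum_\sigma\sigma\otimes\lambda^\sigma\bigr)$ to drop into $\bE^{k-1}_\bullet$, observing (via (\ref{eqn:cycle_g})) that the resulting top $B$-degree coefficients are fibre cycles, filling them using uniform acyclicity of the $\bD^b_\bullet$, and then invoking the inductive hypothesis. The only difference is organizational — the paper extracts the fibre-cycle relations directly from $\partial\tau=0$ before forming $\nu=\sum_\sigma\sigma\otimes\lambda^\sigma$, whereas you subtract $\partial\omega_0$ first and then read off the fibre cycles from $\partial\tau'=0$ — and your support-tracking discussion, while not fully spelled out, identifies the same inputs ((\ref{eqn:findisp of g}), (\ref{eqn:finBdisp of g}), uniform displacement and uniform acyclicity constants) as the paper.
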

\begin{proof}
We prove Lemma \ref{lem:unif acyclic lemma} only when $\tau\in \bE_\bullet^k$, but a similar argument holds if $\tau\in [\bB_\bullet]_k\otimes \bD^b_\bullet$. We proceed inductively on $k$.  The base case $k=0$ is trivial, since then $\partial\sum_{\sigma\in \Sigma_0}\sigma\otimes \lambda^\sigma=\sum_{\sigma\in \Sigma_0}\sigma\otimes \partial\lambda^\sigma$. We assume that Lemma \ref{lem:unif acyclic lemma} holds when $k=j-1$.

We now suppose $k=j$ and that $\tau$ is a cycle of the required form. For each $\sigma\in \Sigma_j$, we write $\partial \sigma=\sum_{\rho\in \Sigma_{j-1}}n^\sigma_\rho \rho$, where $n^\sigma_\rho\in R$. 
Since $\partial\tau=0$, we use the definition of the boundary map for $\bE^{j}_\bullet$ and (\ref{eqn:cycle_g}) to evaluate all the ``$\rho\otimes -$ terms'' in $\partial\tau$ and deduce that 
	\begin{align*}
	\partial(\sum_{\sigma\in \Sigma_j}n^\sigma_\rho  f_\#^{p_j(\sigma),p_{j-1}(\rho)}(\lambda^\sigma) +(-1)^{j-1}\tau_\rho)=0.
	\end{align*} 
Thus for each $\rho\in \Sigma_{j-1}$, there is some some  $\omega^\rho\in D^{p_{j-1}(\rho)}_\bullet$ such that  \[\partial\omega^\rho=\sum_{\sigma\in \Sigma_j} (-1)^{j-1}n^\sigma_\rho  f_\#^{p_j(\sigma),p_{j-1}(\rho)}(\lambda^\sigma) +\tau_\rho.\]

We set $\nu \coloneqq\sum_{\sigma\in \Sigma_j} \sigma\otimes \lambda^\sigma$. Then
\begin{align*}
\partial\nu
&=\sum_{\sigma\in \Sigma_j}\Big((-1)^{j}\sigma\otimes \partial\lambda^\sigma+
g_\#^{p_j(\sigma)}(\partial\sigma\otimes \lambda^\sigma)\Big)\\
&=\sum_{\sigma\in \Sigma_j}\Big((-1)^{j}\sigma\otimes \partial\lambda^\sigma+\sum_{\rho\in \Sigma_{j-1}} n^\sigma_\rho g_\#^{p_j(\sigma)}(\rho\otimes \lambda^\sigma)\Big) 
\end{align*}
for each $\sigma\in \Sigma_j$. 
We can thus write $\tau+(-1)^{j-1}\partial\nu$ as  \begin{align*}
\sum_{\rho\in \Sigma_{j-1}}\rho\otimes \partial\omega^\rho+\sum_{i<j-1,\alpha\in \Sigma_i}\alpha\otimes \gamma^\alpha
\end{align*}
for some $\{\gamma_\alpha\}_{i<j-1,\alpha\in \Sigma_i}$. 
Since $\tau+(-1)^{j-1}\partial\nu$ is  a cycle of the required form,  we can apply  the inductive hypothesis to $\tau+\partial\nu$. 
\end{proof}
We use this lemma to extend $g_\#^b$ from $[\bB_\bullet]_{k-1}\otimes \bD^b_\bullet$ to $[\bB_\bullet]_k\otimes \bD^b_\bullet$ inductively as follows. We assume that $g^b_\#$ has already been defined on $[\bB_\bullet]_k\otimes[ \bD^b_\bullet]_{j-1}$, and let  $\sigma\in \Sigma_k$ and $\lambda\in \Sigma_j^b$. Using (\ref{eqn:cycle_g}), we see that  $g_\#^b(\partial(\sigma\otimes \lambda))=(-1)^kg_\#^b(\sigma\otimes \partial\lambda)+g^b_\#(\partial\sigma\otimes \lambda)$ is a cycle of the form required by Lemma \ref{lem:unif acyclic lemma}. We thus define $g_\#^b(\sigma\otimes\lambda)$ so that $\partial g_\#^b(\sigma\otimes\lambda)=g_\#^b(\partial(\sigma\otimes \lambda))$ and extend linearly. Using Lemma \ref{lem:unif acyclic lemma}, we can ensure that $g_\#^b(\sigma\otimes \lambda)$ will satisfy (\ref{eqn:findisp of g}), (\ref{eqn:finBdisp of g}) and  (\ref{eqn:cycle_g}). This then allows us to define a boundary map $\partial: \bE_{k+1}\rightarrow\bE_{k}$ as above.
Since $\bB_\bullet$ and  $\bD^b_\bullet$ each have finite displacement, it follows from (\ref{eqn:findisp of g}) and the definition of $\partial$ that the chain complex $\bE_\bullet$ has finite displacement.  We thus see that $\bE_\bullet$ is a metric complex over $X$.
   
We now define a chain map $f_\#^b:  \bE_\bullet \rightarrow \bB_\bullet\otimes \bD^b_\bullet $ for every $b\in B$ so that for every $\sigma\otimes \lambda\in T_{i,j}$ with $w=p_i(\sigma)$, we have \begin{equation}\label{eqn:cycle_f}
f^b_\#(\sigma\otimes \lambda)-\sigma\otimes f_\#^{w,b}(\lambda)\in [\bB_\bullet]_{i-1}\otimes \bD^b_\bullet.
\end{equation} 
This is done using Lemma \ref{lem:unif acyclic lemma} in a similar way to how we defined $g_\#^b$. We may assume that $f_\#^b$ satisfies analogues of (\ref{eqn:findisp of g}) and (\ref{eqn:finBdisp of g}). 

In a similar way, we can also define  a chain homotopy  $h_\#^b$ from $f_\#^b g_\#^b$ to the identity, and a chain homotopy $\overline h_\#^b$ from $g_\#^bf_\#^b$ to the identity. This is possible since for all $b,b'\in B$,   $f_{\#}^{b,b'}f_\#^{b',b}$ is chain homotopic to the identity. The  chain homotopies $h_\#^b$ and $\overline h_\#^b$ also satisfy  analogues of (\ref{eqn:findisp of g}) and (\ref{eqn:finBdisp of g}).
Since $g_\#^b$, $f_\#^b$, $h_\#^b$ and $\overline{h}_\#^b$ all satisfy analogues of satisfy  analogues of (\ref{eqn:findisp of g}) and (\ref{eqn:finBdisp of g}), they are proper chain maps and chain homotopies. In particular $\bE_\bullet$ is properly chain homotopic to $\bB_\bullet\otimes \bD^b_\bullet$ for any $b\in B$.

We now show that $\bE_\bullet$ is uniformly acyclic. More specifically, we pick any  $n\geqslant 0$ and show that $\bE_\bullet$ is uniformly $n$-acyclic.  Let $x\in X$, $r\geqslant 0$ and set $b=p(x)$. 
There is an $r_1\geqslant r$ such that 
	\[f_\#^b(\bE_\bullet[N_r(x)]_n)\subseteq \bB_\bullet[N_{r_1}(b)]\otimes \bD^b_\bullet[N_{r_1}(x)].\] 
Using uniform acylicity of both $\bB_\bullet$ and $\bD^b_\bullet$ and the naturality of the K\"unneth formula in Proposition \ref{prop:kunnethchain}, there is an $r_2\geq r_1$ such that the map 
	\[\widetilde H_k(\bB_\bullet[N_{r_1}(b)]\otimes \bD^b_\bullet[N_{r_1}(x)])\rightarrow  \widetilde H_k(\bB_\bullet[N_{r_2}(b)]\otimes \bD^b_\bullet[N_{r_2}(x)]),\] 
induced by inclusion, is zero for $k\leq n$. 
Thus there is an $r_3\geqslant r$,  such that 
	$\widetilde H_k(\bE_\bullet[N_r(x)])\rightarrow \widetilde H_k(\bE_\bullet[N_{r_3}(x)])$, 
induced by $g_\#^bf_\#^b$, is zero for $k\leq n$. 
Since $g_\#^bf_\#^b$ is proper chain homotopic to the identity map, there is some $r_4\geq  r_3$ such that the map $\widetilde H_k(\bE_\bullet[N_r(x)])\rightarrow \widetilde H_k(\bE_\bullet[N_{r_4}(x)])$, induced by inclusion, is zero for $k\leq n$. We can use  (\ref{eqn:findisp of g}), (\ref{eqn:finBdisp of g}), and the analogous identities  for $f_\#^b$ and $h_\#^b$ to show that $r_4$ can be chosen independently of $x$, and so $\bE_\bullet$ is uniformly $n$-acyclic. This completes the proof of Lemma \ref{lem:defchain complex}.

\section{Groups of cohomological codimension one}\label{sec:main result}
We now begin the proof of our main result.  We first recall the analogues of Stallings' end theorem and Dunwoody's accessibility theorem for quotient spaces that were used in \cite{margolis2019almostnormal}.

If $G$ is finitely generated and $H\alnorm G$, it is shown in \cite{connermihalik2014} and \cite{margolis2019almostnormal} that $e(G/H)=\tilde{e}(G,H)$, where $\tilde{e}(G,H)$ is the Kropholler--Roller number of relative ends of the group pair $(G,H)$ \cite{kropholler1989relative}. It thus follows that if $G$ splits over a subgroup commensurable to $H$, then $e(G/H)>1$. The converse follows from \cite{dunwoody1993splitting} and \cite{scott2000splittings}.  
\begin{thm}[\cite{dunwoody1993splitting}, \cite{scott2000splittings}] \label{thm:rel stallings}
Let $G$ be a finitely generated group containing an almost normal subgroup $H\alnorm G$. Then $G$ splits over a subgroup commensurable to $H$ if and only if $e(G/H)>1$.
\end{thm}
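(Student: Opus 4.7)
The plan is to combine the identification $e(G/H) = \tilde e(G,H)$ (established in \cite{connermihalik2014} and \cite{margolis2019almostnormal}) with the Dunwoody--Roller splitting theorem and its refinement by Scott \cite{dunwoody1993splitting, scott2000splittings}. With this identification in place, the theorem reduces to showing that $G$ splits over a subgroup commensurable to $H$ if and only if $\tilde e(G, H) > 1$, so the bulk of the work is really translating the cited results into this form.

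For the easy ($\Leftarrow$) direction, I would start from a splitting of $G$ over a subgroup $K$ commensurable to $H$, and let $T$ be the associated Bass--Serre tree. Since $K$ is commensurable to $H$ and every edge stabiliser is conjugate to $K$, the subgroup $H$ has bounded orbits in $T$ (it fixes a vertex up to finite index). Picking an oriented edge $e$ of $T$ and letting $A \subseteq G$ be the preimage in $G$ of the vertex set of one of the two half-trees determined by $e$, one checks that $A$ is $H$-almost invariant (right multiplication by $H$ moves $A$ only within a bounded neighbourhood of the edge) and that both $A$ and $G \setminus A$ are infinite. This witnesses $\tilde e(G,H) \geqslant 2$, and hence $e(G/H) \geqslant 2$.

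For the hard ($\Rightarrow$) direction, I would apply the relative Stallings-type theorem of Dunwoody--Roller, which, starting from a nontrivial $H$-almost invariant subset of $G$, produces an action of $G$ on a tree with edge stabilisers commensurable with a subgroup of $H$. The main obstacle I anticipate is upgrading ``subgroup of $H$, up to commensurability'' to ``commensurable to $H$'' itself: a priori Dunwoody--Roller splittings could a priori be over much smaller subgroups. This is where the almost normality of $H$ should be essential: because $H$ is commensurated by $G$, every $G$-translate of the $H$-almost invariant set used to build the tree is again $H$-almost invariant, and the edge stabilisers of the resulting tree therefore commensurate $H$ in a strong enough way to force commensurability with $H$. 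Scott's refinement in \cite{scott2000splittings} of Dunwoody--Roller's work provides the precise statement that makes this upgrade go through; assembling the conclusion then requires only unwinding definitions.
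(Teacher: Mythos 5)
Your proposal follows essentially the same route the paper takes: the paper states this result purely as a citation, observing that $e(G/H)=\tilde e(G,H)$ reduces the forward direction to the standard fact about almost invariant sets coming from Bass--Serre trees, and deferring the converse (more ends implies splitting over a subgroup commensurable to $H$, with almost normality doing the work of pinning down the edge group) to Dunwoody--Roller and Scott. Your reading of where the content lies and why almost normality is essential matches the paper; the only slip is cosmetic, namely your $\Leftarrow$/$\Rightarrow$ labels are swapped relative to the phrasing ``splits $\Leftrightarrow e(G/H)>1$.''
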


There is also a relative analogue of Dunwoody's accessibility theorem. The following is explicitly stated in \cite{margolis2019almostnormal}, although similar applications of Dunwoody accessibility appear in \cite{mosher2003quasi} and \cite{kronmoller08roughcayley}.
\begin{thm}[{\cite[Theorem 3.24]{margolis2019almostnormal}}]\label{thm:dunwoody accessibility}
Let $G$ be an almost finitely presented containing a finitely generated almost normal subgroup $H\alnorm G$. Then  $G$ is the fundamental group of a graph of groups such that:
\begin{enumerate}
\item every edge group is commensurable to $H$;
\item every vertex group is finitely generated and doesn't split over a subgroup commensurable to $H$.
\end{enumerate}
\end{thm}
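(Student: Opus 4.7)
The plan is to adapt Dunwoody's classical accessibility argument to the relative setting by working on the quotient space $G/H$. First I would construct a 2-complex $Y$ on which $G$ acts cocompactly with every cell stabilizer commensurable to $H$, and such that $Y$ is ``essentially simply connected'' over $\bbZ_2$. A natural candidate is a Rips-type complex built on $G/H$ (whose cell stabilizers are intersections of conjugates of $H$, hence commensurable to $H$), enhanced with finitely many $G$-orbits of $2$-cells coming from a set of $\bbZ_2$-relators for $G$. Since $G/H$ is coarsely uniformly $0$-acyclic by Proposition \ref{prop:acyclicity of quotient space} (using both $G$ and $H$ of type $FP_1(\bbZ_2)$) the Rips complex $P_r(G/H)$ is connected for large $r$, and attaching $2$-cells reflecting the almost finite presentation of $G$ kills $H_1(Y;\bbZ_2)$ while preserving the property that cell stabilizers are commensurable to $H$.

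Second, I would develop a theory of $G$-invariant tracks (in Dunwoody's sense, over $\bbZ_2$) in $Y$, parametrising splittings: by Theorem \ref{thm:rel stallings} together with the Dunwoody--Scott construction \cite{dunwoody1993splitting,scott2000splittings}, each nontrivial splitting of $G$ over a subgroup commensurable to $H$ produces an essential $G$-invariant track, and the stabilizer of such a track is commensurable to $H$ because it sits inside a cell stabilizer which is itself commensurable to $H$. Conversely, a $G$-invariant track yields a splitting of $G$ over a subgroup commensurable to $H$ via the standard dual-tree construction.

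Third, one iterates: as long as some vertex group in the current graph of groups still splits over a subgroup commensurable to $H$, refine the decomposition by cutting along a track. Termination is the heart of the matter, and here I would invoke Dunwoody's resolution theorem: in a cocompact $G$-complex with bounded cell complexity there is an a priori upper bound, linear in the number of $G$-orbits of $2$-cells of $Y$, on the number of pairwise disjoint, non-parallel $G$-invariant $\bbZ_2$-tracks. Hence the iterative process must stop, producing a graph of groups whose edge groups are all commensurable to $H$ and whose vertex groups admit no further such splitting; finite generation of the vertex groups follows from finite generation of $G$ and the standard Bass--Serre/Dunwoody accessibility argument. The main obstacle is verifying that Dunwoody's $\bbZ_2$-track machinery, originally developed for splittings over \emph{finite} edge groups, ports across to the relative setting where edge stabilizers are commensurable to $H$; in particular one must check that the stabilizer of a $G$-invariant track is actually commensurable to $H$ (not merely contained in a conjugate of $H$) and that the resolution bound is insensitive to the passage from finite to $H$-commensurable stabilizers, so long as cell stabilizers are themselves commensurable to $H$.
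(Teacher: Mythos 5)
This theorem is quoted verbatim from \cite[Theorem 3.24]{margolis2019almostnormal} and is not proved in the present paper, so there is no internal argument to compare against; the strategy you describe — Rips complex on $G/H$, $G$-equivariant $\bbZ_2$-tracks, Dunwoody's resolution bound — is indeed the framework that reference uses, and it is the right one. Your sketch is largely on target, but let me flag where the real work lies.

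First, the construction of $Y$ is glossed over. You correctly get a connected $1$-skeleton $P_r(G/H)$ from coarse uniform $0$-acyclicity, but Proposition \ref{prop:acyclicity of quotient space} only gives coarse uniform $1$-acyclicity of $G/H$ when $H$ is itself of type $FP_2(\bbZ_2)$, which is \emph{not} assumed here — $H$ is merely finitely generated. So one cannot simply increase $r$ or ``kill $H_1$'' abstractly. The correct construction is to push the $\bbZ_2$-relator $2$-cells of an (almost) presentation $2$-complex for $G$ forward to $G/H$ via the coarse projection $p$, then verify that the resulting finitely many $G$-orbits of $2$-cells kill $H_1$: the fibres $p^{-1}(b)$ are coarsely connected (each is a Hausdorff-bounded perturbation of an $H$-coset), so every cycle in $P_s(G/H)$ lifts to one in $P_{s'}(G)$, which bounds by almost finite presentability, and the filling projects down. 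This lifting-then-projecting argument is what makes the construction work without $H$ being $FP_2$, and it deserves to be spelled out rather than asserted.

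Second, the claim that ``the stabilizer of such a track is commensurable to $H$ because it sits inside a cell stabilizer'' does not follow as stated: a subgroup of a group commensurable to $H$ need not be commensurable to $H$. You flag this yourself at the end, which is good, but flagging the gap is not closing it. The resolution is to observe that an essential $G$-finite track component $t_0$ separates $Y$ into two unbounded pieces, its setwise stabilizer $G_{t_0}$ acts cocompactly on $t_0$, and $t_0$ lies within bounded Hausdorff distance of finitely many fibres $D_b$; cocompactness then forces $G_{t_0}$ to be of finite index in the cell stabilizers it meets, hence commensurable to $H$. Together with Theorem \ref{thm:rel stallings} this upgrades the dual-tree construction to a splitting with edge group commensurable to $H$. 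The resolution bound with infinite (but uniformly commensurable) cell stabilizers does go through essentially unchanged — Dunwoody's counting of intersection patterns with $2$-cells depends only on the finiteness of $Y/G$, not on finiteness of stabilizers — so your worry there, while reasonable caution, is the lesser obstacle.
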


We also make use of the following lemma for deducing that vertex groups of graphs of groups are of type $FP_\infty(R)$:
\begin{prop}[{\cite[Proposition 2.13]{bieri1981homological}}]\label{prop:finiteness of gogs}
Suppose that $G$ is a group of type $FP_\infty(R)$ and is the fundamental group of a finite graph of groups $\cG$ in which all edge groups are also of type $FP_\infty(R)$. Then all vertex groups of $\cG$ are of type $FP_\infty(R)$.
\end{prop}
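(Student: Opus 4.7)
My approach is to extract a short exact sequence of $RG$-modules from the Bass--Serre tree $T$ of $\cG$ and then apply general homological principles. Since $T$ is a contractible, one-dimensional $G$-CW complex with finitely many $G$-orbits of cells, whose stabilisers are conjugates of the vertex and edge groups of $\cG$, the augmented cellular chain complex yields the short exact sequence of $RG$-modules
\begin{equation*}
0 \to \bigoplus_{e \in E\cG} R[G/G_e] \to \bigoplus_{v \in V\cG} R[G/G_v] \to R \to 0.
\end{equation*}

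Two standard homological facts are then needed. The first is that, for any subgroup $K \leq G$, the induced module $R[G/K] \cong \mathrm{Ind}_K^G R$ is of type $FP_\infty$ over $RG$ if and only if $K$ itself is of type $FP_\infty$ as a group; this is because $RG$ is free as an $RK$-module, so induction and restriction both carry finitely generated projective resolutions to finitely generated projective resolutions. The second is the two-out-of-three principle: in a short exact sequence of $RG$-modules, if any two of the three terms are of type $FP_\infty$, then so is the third (a dimension-shifting argument using Bieri's characterisation of $FP_n$).

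Combining these, by hypothesis the trivial module $R$ is $FP_\infty$ over $RG$, and each $R[G/G_e]$ is $FP_\infty$. As $\cG$ is finite, the finite direct sum $\bigoplus_{e \in E\cG} R[G/G_e]$ is $FP_\infty$, so the two-out-of-three principle forces $\bigoplus_{v \in V\cG} R[G/G_v]$ to be $FP_\infty$ over $RG$. Each summand $R[G/G_v]$ is a cyclic $RG$-module (generated by the trivial coset), hence finitely generated, and a finitely generated direct summand of an $FP_\infty$ module is itself $FP_\infty$. Applying the first fact in reverse, each vertex group $G_v$ is of type $FP_\infty(R)$, as desired.

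The main technical point requiring care is verifying that a finitely generated direct summand of an $FP_\infty$ module is $FP_\infty$; this is a bookkeeping task rather than a conceptual obstacle, proved by induction using Bieri's criterion that, for a short exact sequence $0 \to K \to P \to M \to 0$ with $P$ finitely generated projective, $M$ is $FP_n$ if and only if $K$ is $FP_{n-1}$, together with the trivial base case that direct summands of finitely generated modules are finitely generated.
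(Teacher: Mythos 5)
The paper does not give a proof of this proposition; it simply cites Bieri's lecture notes, so I am judging your argument on its own merits. Your overall strategy is the standard one: extract the Bass--Serre short exact sequence, translate ``$G_v$ is of type $FP_\infty(R)$'' into ``$R[G/G_v]$ is of type $FP_\infty$ over $RG$'', and then apply module-theoretic closure properties of $FP_\infty$. Steps 1, 3, 4 and 5 are sound, and your dimension-shifting argument that a direct summand of an $FP_\infty$ module is $FP_\infty$ is correct.

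The gap is in the justification of your first ``standard homological fact''. You assert that restriction from $RG$ to $RK$ ``carries finitely generated projective resolutions to finitely generated projective resolutions,'' and use this to obtain the implication you actually need, namely that $R[G/K]$ being $FP_\infty$ over $RG$ forces $K$ to be $FP_\infty$. This assertion is false unless $[G:K]<\infty$: the restriction of the rank-one free module $RG$ to $RK$ is free of rank $[G:K]$, hence not finitely generated when $K$ has infinite index. So the stated reason does not prove the direction of the equivalence that your argument uses in its final step. The equivalence itself is true, but it requires a different argument. One correct route is via the Bieri--Eckmann colimit criterion: given a direct system $(N_\lambda)$ of $RK$-modules with vanishing colimit, the induced system $(\mathrm{Ind}_K^G N_\lambda)$ also has vanishing colimit (since induction is a left adjoint), Shapiro's lemma gives $\mathrm{Ext}^i_{RG}(R[G/K],\mathrm{Ind}_K^G N_\lambda)\cong H^i(K,(\mathrm{Ind}_K^G N_\lambda)|_K)$, and $N_\lambda$ is a $K$-module direct summand of $(\mathrm{Ind}_K^G N_\lambda)|_K$; combining these with the $FP_n$-criterion for $R[G/K]$ yields $\varinjlim H^i(K,N_\lambda)=0$, so $K$ is $FP_n$. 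You should replace the flawed ``restriction preserves finite generation'' sentence with an argument of this kind.
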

In order to apply Theorem \ref{thm:coarse kunneth} -- the coarse K\"unneth formula --  it simplifies matters considerably if we work over a field. This is because for vector spaces,  all $\tor$ terms vanish and the tensor product $V\otimes W$ is non-zero  if and only if $V$ and $W$ are both non-zero.
\begin{lem}\label{lem:higher cohom vanish}
Let $\bbF$ be a field. Suppose $G$ and $H$ are groups of type $FP_\infty(\bbF)$ such that $H\alnorm G$  and  $\Gcd_\bbF(G)\leq \Gcd_\bbF(H)+1$. Then:
\begin{enumerate}
\item \label{item:highercohom}$\coarse^i(G/H;\bbF)=0$ for $i>1$;
\item \label{item:1cohom}$\coarse^1(G/H;\bbF)\neq 0$ if and only if $\Gcd_\bbF(G)= \Gcd_\bbF(H)+1$;
\item \label{item:finindex} $H$ has finite index in $G$ if and only if $\Gcd_\bbF(G)=\Gcd_\bbF(H)$.
\end{enumerate} 
\end{lem}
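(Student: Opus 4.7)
The plan is to apply the coarse K\"unneth theorem (Theorem \ref{thm:coarse kunneth}) to the quotient coarse bundle $p: G \to G/H$ with fibre $H$. Both $H$ and $G/H$ are coarsely uniformly acyclic over $\bbF$ by Propositions \ref{prop:geomcharoffiniteness} and \ref{prop:acyclicity of quotient space} respectively, so the theorem applies. Since we work over the field $\bbF$, every $\tor^\bbF_1$-term vanishes, and the formula reduces to the natural isomorphism
\[\coarse^k(G;\bbF) \cong \bigoplus_{i+j=k} \coarse^i(H;\bbF) \otimes_\bbF \coarse^j(G/H;\bbF).\]
Set $n := \Gcd_\bbF(G)$ and $m := \Gcd_\bbF(H)$. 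Propositions \ref{prop:coarse cohom vs cohom} and \ref{prop:gorenstein}(1) give $\coarse^k(G;\bbF) \cong H^k(G,\bbF G)$, which vanishes for $k > n$ and is nonzero for $k = n$; analogously for $H$. Proposition \ref{prop:gorenstein}(2) together with the hypothesis $n \leq m+1$ forces $m \in \{n-1,\, n\}$.

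Part (1) then becomes a straightforward tensor-product argument: for $k > n$ the left side of the K\"unneth isomorphism vanishes, so the summand $(i,j) = (m,\,k-m)$ also vanishes, and since $\coarse^m(H;\bbF) \neq 0$ and the tensor product of nonzero $\bbF$-vector spaces is nonzero, we conclude $\coarse^{k-m}(G/H;\bbF) = 0$. This gives $\coarse^j(G/H;\bbF) = 0$ for $j > n-m$, and as $n - m \leq 1$, part (1) follows. For part (2), I would examine the K\"unneth identity at $k = n$, whose left side is nonzero: by part (1) the surviving summands have $j \leq 1$, and they also have $i \leq m$. When $m = n-1$, the $(n,0)$-summand vanishes because $\coarse^n(H;\bbF) = 0$, forcing the $(n-1,1)$-summand to be nonzero and hence $\coarse^1(G/H;\bbF) \neq 0$; when $m = n$, the stronger vanishing $\coarse^j(G/H;\bbF) = 0$ for $j \geq 1$ already supplies $\coarse^1(G/H;\bbF) = 0$.

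Part (3) follows from combining part (2) with Proposition \ref{prop:cohomatinf}(\ref{item:cohomatinf0}): $\coarse^0(G/H;\bbF) \neq 0$ iff $G/H$ is bounded, and the latter is equivalent to $[G:H] < \infty$ because $G \subseteq N_D(H)$ for a proper left-invariant metric covers only finitely many $H$-cosets. When $m = n$, only $j = 0$ summands can contribute to the K\"unneth sum at $k = n$, forcing $\coarse^0(G/H;\bbF) \neq 0$; when $m = n-1$, part (2) gives $\coarse^1(G/H;\bbF) \neq 0$, so by Proposition \ref{prop:cohomatinf}(\ref{item:cohomatinf1}) the space $G/H$ has more than one end and is unbounded. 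The main step to watch is the interplay of the constraints $i \leq m$, $j \leq n - m$, and $i + j = k$ among the K\"unneth summands; no deeper obstacle arises, precisely because tensor products of $\bbF$-vector spaces are so well-behaved.
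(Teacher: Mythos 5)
Your proof is correct and follows essentially the same approach as the paper: apply the coarse K\"unneth theorem to the bundle $G \to G/H$, use field coefficients so the $\tor$ terms vanish and tensor products of nonzero vector spaces are nonzero, and translate between $\Gcd_\bbF$ and top nonvanishing coarse cohomology via Propositions \ref{prop:coarse cohom vs cohom} and \ref{prop:gorenstein}. The only cosmetic difference is that you organise the argument around the dichotomy $\Gcd_\bbF(H) \in \{\Gcd_\bbF(G) - 1, \Gcd_\bbF(G)\}$ and prove the forward direction of (3) by the contrapositive via parts (1)--(2), whereas the paper indexes by $n = \Gcd_\bbF(H)$ and handles that direction directly from the isomorphism $H^k(G,\bbF G) \cong H^k(H,\bbF H)$ for finite-index subgroups.
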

\begin{proof}
Let $B$ be the quotient space $G/H$, which is coarsely uniformly acyclic over $\bbF$ by Proposition \ref{prop:acyclicity of quotient space}. Since $H$ is of type $FP_\infty(\bbF)$, it is also coarsely uniformly acyclic over $\bbF$ by Proposition \ref{prop:geomcharoffiniteness}.   Let $n=\Gcd_\bbF(H)$, so $\coarse^n(H;\bbF)\neq 0$ by Propositions \ref{prop:coarse cohom vs cohom} and \ref{prop:gorenstein}.  We  now apply Theorem \ref{thm:coarse kunneth}. As $\Gcd_\bbF(G)\leq n+1$, $\coarse^n(H;\bbF)\otimes\coarse^{i-n}(B;\bbF)=0$ for $i>n+1$, and so $\coarse^{i}(B;\bbF)=0$ for $i>1$. This shows (\ref{item:highercohom}). We now deduce  that $H^{n+1}(G,\bbF G)\cong\coarse^{n+1}(G;\bbF)\cong\coarse^n(H;\bbF)\otimes \coarse^1(B;\bbF)$, and (\ref{item:1cohom}) readily follows.

We now show (\ref{item:finindex}).  If $H$ has finite index in $G$, then  $H^k(G,\bbF G)\cong H^k(H,\bbF H)$ as $\bbF$-modules and so $\Gcd_\bbF(G)=\Gcd_\bbF(H)$.  Conversely, suppose $\Gcd_\bbF(G)=\Gcd_\bbF(H)$. Then by (\ref{item:highercohom}) and (\ref{item:1cohom}) we see $\coarse^i(B;\bbF)=0$ for $i>0$, and so $\coarse^{n}(G;\bbF)\cong\coarse^n(H;\bbF)\otimes \coarse^0(B;\bbF)$. Thus $ \coarse^0(B;\bbF)\neq 0$ as $\Gcd_\bbF(G)=n$, and so $B$ is a  bounded metric space by Proposition \ref{prop:cohomatinf}. It is easy to see that $B=G/H$ is bounded if and only if $H$ has finite index in $G$.
\end{proof}

We now prove our main theorem in the case the coefficient ring is a field. 
\begin{thm}\label{thm:main_technical}
Let $\bbF$ be a field. Suppose $G$ and $H$ are groups of type $FP_\infty(\bbF)$ such that $H\alnorm G$,  $G$ is almost finitely presented and $\Gcd_\bbF(G)=\Gcd_\bbF(H)+1$. Then $G$ is the fundamental group of a graph of groups in which all edge and vertex groups are commensurable to $H$.
\end{thm}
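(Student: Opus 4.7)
The plan is to mirror Stallings' proof of Theorem \ref{thm:stallingsCD1}: first show the quotient space $G/H$ has more than one end, then extract a splitting of $G$ over a subgroup commensurable to $H$ and iterate. First I would apply Lemma \ref{lem:higher cohom vanish}(\ref{item:1cohom}) with the hypothesis $\Gcd_\bbF(G) = \Gcd_\bbF(H)+1$ to conclude $\coarse^1(G/H;\bbF) \neq 0$. By Proposition \ref{prop:acyclicity of quotient space}, the quotient space $G/H$ is coarsely uniformly acyclic over $\bbF$, so Proposition \ref{prop:cohomatinf}(\ref{item:cohomatinf1}) forces $e(G/H) > 1$. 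Since $G$ is almost finitely presented and $H$ is finitely generated (being of type $FP_1(\bbF)$), Theorem \ref{thm:rel stallings} produces a splitting of $G$ over a subgroup commensurable to $H$, and Theorem \ref{thm:dunwoody accessibility} then yields a graph of groups decomposition $\cG$ of $G$ in which every edge group is commensurable to $H$ and every vertex group is finitely generated and admits no further splitting over a subgroup commensurable to $H$.

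It then remains to upgrade the accessibility conclusion: every vertex group must itself be commensurable to $H$. Fix a vertex group $V$ of $\cG$; since $G$ splits non-trivially and the underlying graph of $\cG$ is connected, $V$ contains at least one adjacent edge group $H_e$, which is commensurable to $H$. The almost normality $H \alnorm G$ passes to $H_e \alnorm V$, since conjugation by $v \in V \leq G$ preserves the commensurability class of $H$, and hence of $H_e$. Proposition \ref{prop:finiteness of gogs} gives that $V$ is of type $FP_\infty(\bbF)$, and Proposition \ref{prop:gorenstein}(2) yields
\[
\Gcd_\bbF(H_e) \leq \Gcd_\bbF(V) \leq \Gcd_\bbF(G) = \Gcd_\bbF(H_e) + 1.
\]
If the right-hand inequality were an equality, Lemma \ref{lem:higher cohom vanish}(\ref{item:1cohom}) applied to the pair $(V,H_e)$ would give $\coarse^1(V/H_e;\bbF) \neq 0$, so that $e(V/H_e) > 1$, and Theorem \ref{thm:rel stallings} would exhibit a splitting of $V$ over a subgroup commensurable to $H_e$, contradicting the defining property of $V$. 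Hence $\Gcd_\bbF(V) = \Gcd_\bbF(H_e)$, and Lemma \ref{lem:higher cohom vanish}(\ref{item:finindex}) forces $H_e$ to have finite index in $V$, so $V$ is commensurable to $H$.

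The step I expect to be the main obstacle is precisely this last one: accessibility alone only guarantees that the vertex groups admit no further $H$-splitting, whereas we need the strictly stronger statement that each $V$ is itself commensurable to $H$. The key leverage is that Lemma \ref{lem:higher cohom vanish} is applied twice---once globally to $(G,H)$ to force the initial splitting, and once to each indecomposable vertex pair $(V,H_e)$---so that the unit dimension jump inherited from $G$ is entirely consumed by the graph-of-groups structure and leaves no room for $H_e$ to sit inside $V$ as a proper almost normal subgroup.
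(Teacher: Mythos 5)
Your proof is correct and follows essentially the same route as the paper: use Lemma \ref{lem:higher cohom vanish} together with Propositions \ref{prop:acyclicity of quotient space} and \ref{prop:cohomatinf} to get $e(G/H)>1$, apply the accessibility theorem (Theorem \ref{thm:dunwoody accessibility}), and then run the same cohomological sandwich $\Gcd_\bbF(H_e)\leq\Gcd_\bbF(V)\leq\Gcd_\bbF(H_e)+1$ on each vertex group, ruling out the upper case via Theorem \ref{thm:rel stallings} and Lemma \ref{lem:higher cohom vanish}. The only cosmetic difference is that you phrase the vertex-group step as a proof by contradiction while the paper argues directly; the lemmas invoked and the logical content are identical.
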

\begin{proof}
Let $n=\Gcd_\bbF(H)$. It follows from Lemma \ref{lem:higher cohom vanish} that $\coarse^1(G/H;\bbF)\neq 0$, and so $G/H$ has more than one end by Proposition \ref{prop:cohomatinf}. Theorem \ref{thm:dunwoody accessibility} tells us $G$ can be written as the fundamental group of a graph of groups $\cG$ in which all edge groups are commensurable to $H$ and no vertex group splits over a subgroup commensurable to $H$.

Let $G_v$ be a vertex group of $G$ and let $H_v$ be any incident edge group. We need only demonstrate that $H_v$ has finite index in $G_v$. Since $H_v$ is commensurable to $H$ and $H\alnorm G$, we deduce that $H_v\alnorm G_v$. By Proposition \ref{prop:finiteness of gogs}, $G_v$ is of type $FP_\infty(\bbF)$. By Proposition \ref{prop:gorenstein},   \[n=\Gcd_\bbF(H_v)\leq \Gcd_\bbF(G_v)\leq n+1=\Gcd_\bbF(H)+1=n+1.\] As $G_v$ doesn't split over a subgroup commensurable to $H_v$, Theorem \ref{thm:rel stallings} and Proposition \ref{prop:cohomatinf} ensure that $\coarse^1(G_v/H_v;\bbF)=0$. By Lemma \ref{lem:higher cohom vanish}, $\Gcd_\bbF(G_v)= n$ and so  $H_v$ is a finite index subgroup of $G_v$, and hence $G_v$ is commensurable to $H$.
\end{proof}
\begin{rem}\label{rem:ccdvsgcd}
Let $d_\bbF(G)\coloneqq \sup\{n\mid H^n(G,\bbF G)\neq 0\}$. 
Theorem \ref{thm:main_technical} actually holds whenever $G$ and $H$ are groups of type $FP_\infty(\bbF)$ such that $H\alnorm G$,  $G$ is almost finitely presented,  $d_\bbF(G)=d_\bbF(H)+1<\infty$ and $d_\bbF(G')\leq d_\bbF(G)$ whenever $G'\leq G$.
\end{rem}

Since Gorenstein and virtual cohomological dimension agree whenever the latter is finite, we deduce the following.
\begin{cor}\label{cor:field vfp}
Let $\bbF$ be a field. Suppose $G$ and $H$ are groups of type $VFP(\bbF)$ such that $H\alnorm G$, $G$ is almost finitely presented and $\vcd_\bbF(G)=\vcd_\bbF(H)+1$. Then $G$ is the fundamental group of a graph of groups in which all edge and vertex groups are commensurable to $H$.
\end{cor}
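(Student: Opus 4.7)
The plan is to derive Corollary \ref{cor:field vfp} directly from Theorem \ref{thm:main_technical} by verifying that the $VFP(\bbF)$ hypotheses in the corollary entail the $FP_\infty(\bbF)$ hypotheses in the theorem, and that the equality of virtual cohomological dimensions translates to an equality of Gorenstein cohomological dimensions. The remaining hypotheses of Theorem \ref{thm:main_technical} ($H\alnorm G$ and $G$ almost finitely presented) are identical to the hypotheses of the corollary, so no additional work is needed there.

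First I would observe that $G$ and $H$ are themselves of type $FP_\infty(\bbF)$. Indeed, by definition of $VFP(\bbF)$ each of them contains a finite-index subgroup of type $FP(\bbF)$, which is a fortiori of type $FP_\infty(\bbF)$ (simply pad the finite-length finitely-generated projective resolution with zeros). The property $FP_n(R)$ is invariant under passage to finite-index sub- and supergroups --- the standard fact used already throughout the paper (see e.g.\ Bieri, or \cite[Ch.~VIII]{brown1982cohomology}) --- so both $G$ and $H$ are of type $FP_\infty(\bbF)$.

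Next I would translate the dimension hypothesis. Since $G$ and $H$ are of type $VFP(\bbF)$, the virtual cohomological dimensions $\vcd_\bbF(G)$ and $\vcd_\bbF(H)$ are finite. The last sentence of Proposition \ref{prop:gorenstein} then gives $\Gcd_\bbF(G)=\vcd_\bbF(G)$ and $\Gcd_\bbF(H)=\vcd_\bbF(H)$, so the hypothesis $\vcd_\bbF(G)=\vcd_\bbF(H)+1$ becomes $\Gcd_\bbF(G)=\Gcd_\bbF(H)+1$.

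With these two observations in hand, all hypotheses of Theorem \ref{thm:main_technical} are satisfied, and applying it yields the conclusion that $G$ is the fundamental group of a graph of groups in which every vertex and edge group is commensurable to $H$. Because the real content is all packaged in Theorem \ref{thm:main_technical} and its underlying Lemma \ref{lem:higher cohom vanish}, there is no genuine obstacle in this deduction; the corollary is essentially a dictionary entry converting the Gorenstein-dimensional version of the main technical theorem into a statement about virtually-$FP$ groups. I would therefore expect the write-up to occupy only a few lines.
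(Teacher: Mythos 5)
Your proposal is correct and matches the paper's own (one-line) deduction: the paper also derives Corollary \ref{cor:field vfp} from Theorem \ref{thm:main_technical} by observing that Gorenstein and virtual cohomological dimension agree when the latter is finite (the last sentence of Proposition \ref{prop:gorenstein}), with the $FP_\infty(\bbF)$ hypothesis following from $VFP(\bbF)$ as you note.
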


Applying Proposition \ref{prop:type vf} allows us to prove this theorem for virtual  cohomological dimension over any PID. 
\begin{thm}\label{thm:main vfp}
Let $R$ be a PID and let $G$ and $H$ be groups of type $VFP(R)$ such that $H\alnorm G$, $G$ is almost finitely presented and $\vcd_R(G)=\vcd_R(H)+1$. Then $G$ is the fundamental group of a graph of groups in which all edge and vertex groups are commensurable to $H$.
\end{thm}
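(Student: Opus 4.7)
The plan is to reduce the statement to the field case already established in Corollary \ref{cor:field vfp}, using Proposition \ref{prop:type vf} to select a good field of coefficients. Since both $G$ and $H$ are of type $VFP(R)$, tensoring a finite-type projective resolution over $R$ with $R_i$ yields a corresponding resolution over $R_i G$, so both are automatically of type $VFP(R_i)$ for every $i\in\bbP_R\cup\{0\}$. The task is therefore to pick $i$ in such a way that $\vcd_{R_i}(G)=\vcd_{R_i}(H)+1$, and then quote the corollary.

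First I would apply Proposition \ref{prop:type vf} to $H$ and choose $j\in\bbP_R\cup\{0\}$ for which $\vcd_\bbF(H)=\vcd_R(H)$, where $\bbF\coloneqq R_j$. Applying the same proposition to $G$ gives
\[
\vcd_\bbF(G)\leqslant \vcd_R(G)=\vcd_R(H)+1=\vcd_\bbF(H)+1.
\]
On the other hand, since $\vcd_\bbF$ coincides with Gorenstein cohomological dimension whenever it is finite, Proposition \ref{prop:gorenstein}(2) gives $\vcd_\bbF(H)\leqslant \vcd_\bbF(G)$. Consequently $\vcd_\bbF(G)$ is either $\vcd_\bbF(H)$ or $\vcd_\bbF(H)+1$.

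To rule out the equality case, note that if $\vcd_\bbF(G)=\vcd_\bbF(H)$ then Lemma \ref{lem:higher cohom vanish}(\ref{item:finindex}) forces $H$ to have finite index in $G$, and hence $\vcd_R(G)=\vcd_R(H)$, contradicting the hypothesis. Therefore $\vcd_\bbF(G)=\vcd_\bbF(H)+1$, so Corollary \ref{cor:field vfp} applied over $\bbF$ (with the hypotheses of $VFP(\bbF)$, almost finite presentability, and the almost normality $H\alnorm G$ inherited from the PID setting) produces the desired graph-of-groups decomposition of $G$ in which all edge and vertex groups are commensurable to $H$. I do not anticipate any real obstacle here: the substantive work has already been accomplished in Corollary \ref{cor:field vfp}, and the present step is essentially a change-of-coefficients dichotomy made possible by Proposition \ref{prop:type vf}.
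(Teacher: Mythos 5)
Your proposal is correct and follows essentially the same route as the paper's proof: choose a residue field $\bbF=R_j$ via Proposition \ref{prop:type vf} so that $\vcd_\bbF(H)=\vcd_R(H)$, observe $\vcd_\bbF(G)\leqslant\vcd_R(G)$, rule out $\vcd_\bbF(G)=\vcd_\bbF(H)$ by the finite-index criterion of Lemma \ref{lem:higher cohom vanish}, and then invoke Corollary \ref{cor:field vfp}. You spell out a couple of steps the paper leaves implicit (that $VFP(R)$ descends to $VFP(R_i)$, and that $\vcd_\bbF(H)\leqslant\vcd_\bbF(G)$ via Proposition \ref{prop:gorenstein}), but the argument is the same.
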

We note that when $R=\bbZ$ or $\bbZ_2$, any group of type $VFP(R)$  is necessarily almost finitely presented. 
\begin{proof}
Let $n=\vcd_R(H)$. By Proposition \ref{prop:type vf}, there is a field $R_i$ such that $\vcd_{R_i}(H)=n$. It also follows from Proposition \ref{prop:type vf} that $\vcd_{R_i}(G)\leq \vcd_R(G)$.  As $\vcd_R(G)>\vcd_R(H)$,  $H$ cannot be a finite index subgroup of $G$, so   Lemma \ref{lem:higher cohom vanish} ensures $\vcd_{R_i}(G)=n+1$. We can now apply Corollary \ref{cor:field vfp}.
\end{proof}

If $H$ is a duality group of dimension $n$, then $H^i(H,P)=0$ when $i\neq n$ and $P$ is a projective $\bbZ H$-module. This follows from the characterisation of duality groups in Proposition \ref{prop:duality def}, Proposition 5.2 of \cite{brown1982cohomology}, and the fact that every projective module is the direct summand of a free module. 
Combining this observation with  Proposition 2.6 of \cite{kropholler2006spectral} and Theorem \ref{thm:main vfp}, we deduce the following:
\begin{thm}
Let $G$ be a finitely generated group of cohomological dimension $n+1$ and let $H\alnorm G$ be a duality group of dimension $n$. Then $G$ splits as a graph of groups in which every vertex and edge group is commensurable to $H$.
\end{thm}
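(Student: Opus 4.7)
The strategy is to verify the hypotheses of Theorem \ref{thm:main vfp} with $R = \bbZ$ and then apply that theorem directly. The easy inputs come first: since $H$ is a duality group of dimension $n$, it is of type $FP$ with $\cd(H) = n$, so in particular $H$ is of type $VFP$ with $\vcd(H) = n$. On the $G$-side, the assumption $\cd(G) = n + 1 < \infty$ forces $G$ to be torsion-free, so once we know $G$ is of type $VFP$ we will have $\vcd(G) = \cd(G) = n + 1 = \vcd(H) + 1$, which is the cohomological codimension-one condition required by Theorem \ref{thm:main vfp}.

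The main obstacle is to upgrade the mere finite generation of $G$ to the finiteness property type $VFP$, and in particular to show that $G$ is almost finitely presented. Here I would invoke Proposition 2.6 of \cite{kropholler2006spectral} together with the observation recorded just before the theorem statement: for the duality group $H$ of dimension $n$, the cohomology $H^i(H,P)$ vanishes for all $i \neq n$ whenever $P$ is a projective $\bbZ H$-module. Roughly speaking, Kropholler's machinery uses a spectral sequence associated with the almost normal inclusion $H \alnorm G$, and this concentration of the cohomology of $H$ in a single degree causes the spectral sequence to collapse in a way that propagates the finiteness of $H$ across to $G$. The outcome is that $G$ is of type $FP_\infty$. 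Combined with $\cd(G) < \infty$, a theorem of Bieri then upgrades type $FP_\infty$ to type $FP$, so $G$ is of type $VFP$, and in particular of type $FP_2$ and hence almost finitely presented. This is the delicate step, and all the external machinery is concentrated here.

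With these finiteness properties in hand, every hypothesis of Theorem \ref{thm:main vfp} is satisfied: $H \alnorm G$, both $H$ and $G$ are of type $VFP$, $G$ is almost finitely presented, and $\vcd(G) = \vcd(H) + 1$. Theorem \ref{thm:main vfp} then immediately yields the desired decomposition of $G$ as a finite graph of groups in which every vertex group and every edge group is commensurable to $H$.
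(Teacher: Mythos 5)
Your proposal reconstructs exactly the argument the paper intends. The paper's "proof" is the short paragraph preceding the theorem: it records the vanishing $H^i(H,P)=0$ for $i\neq n$ and projective $P$ (via Proposition \ref{prop:duality def} and \cite[Proposition 5.2]{brown1982cohomology}), then declares that combining this with Proposition 2.6 of \cite{kropholler2006spectral} and Theorem \ref{thm:main vfp} gives the result. You correctly identify that the role of Kropholler's Proposition 2.6 together with this vanishing is to push finiteness from $H$ across the almost normal inclusion to $G$, that finite $\cd(G)$ upgrades $FP_\infty$ to $FP$ (and gives torsion-freeness, hence $\vcd(G)=\cd(G)=n+1=\vcd(H)+1$), and that all the remaining hypotheses of Theorem \ref{thm:main vfp} over $R=\bbZ$ are then in place — this is exactly the intended combination.
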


\section{Almost normal subgroups of one-relator groups}\label{sec:1rel}
In this section we generalise Corollary 4 of \cite{bieri1978cd2}.

\onerel

To prove this, we we use the Euler characteristic of a group as defined in \cite[\S IX]{brown1982cohomology}. This is defined for all groups of type $VFP$ and satisfies the following property: $\chi(G)= \frac{\chi(G')}{[G:G']}$ whenever $G$ is of type $VFP$ and $[G:G']<\infty$. We can use \cite[Proposition IX.7.3, e and e']{brown1982cohomology} to deduce that  \begin{align}\chi(A*_CB)=\chi(A)+\chi(B)-\chi(C)\label{eqn:amalg}\end{align} when $A*_CB$, $A$, $B$ and $C$ all have type $VFP$, and \begin{align}\label{eqn:hnn}\chi(A*_C)=\chi(A)-\chi(C)\end{align} when  $A*_C$, $A$, $B$ and $C$ all have type $VFP$.
We use these observations to calculate the Euler characteristic of certain graphs of groups.
\begin{lem}\label{lem:gogeuler}
Suppose $H\leq G$ are groups of type $VFP$ such that  $G$ is the fundamental group of a non-trivial reduced graph of groups $\cG$ in which every vertex and edge group contains $H$ as a subgroup of finite index. Then:
\begin{enumerate}
\item If $\chi(H)=0$, then $\chi(G)=0$.
\item If $\chi(H)\neq 0$, then $\frac{\chi(G)}{\chi(H)}\leq 0$. Moreover, $\chi(G)=0$ if and only if  the Bass-Serre tree of $\cG$ is a line, i.e. either $G=A*_CB$ where $[A:C]=[B:C]=2$, or $G=A*_\phi$ where $\phi:A\rightarrow A$ is an isomorphism.
\end{enumerate}
\end{lem}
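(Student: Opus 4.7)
My plan is to reduce the computation to a single numerical sum $\sigma$ over the vertices and edges of $\cG$, bound $\sigma$ from above using the finite-index hypothesis and reducedness, and then extract the equality case by tracing the inequalities.

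First I would establish, by induction on the number of edges of $\cG$ and iterating the formulas (\ref{eqn:amalg}) and (\ref{eqn:hnn}), the additive identity
\[\chi(G) = \sum_v \chi(G_v) - \sum_e \chi(G_e),\]
the sums running over vertices and edges of $\cG$. At each step the intermediate fundamental group is of type $VFP$ because it is the fundamental group of a subgraph of groups whose vertex and edge groups are all commensurable with $H$. Writing $n_v \coloneqq [G_v:H]$ and $n_e \coloneqq [G_e:H]$ and using multiplicativity of $\chi$ under finite-index inclusions, this rewrites as
\[\chi(G) = \chi(H) \cdot \sigma, \qquad \sigma \coloneqq \sum_v \frac{1}{n_v} - \sum_e \frac{1}{n_e},\]
so part (1) is immediate.

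For part (2), I would prove $\sigma \leqslant 0$ by a vertex-by-vertex estimate. The two key inputs are: (a) the inclusion $G_e \leqslant G_v$ at any incident vertex yields $n_e \leqslant n_v$; and (b) reducedness of $\cG$ forces, at every non-loop edge $e$, both inclusions $G_e \hookrightarrow G_{\iota(e)}$ and $G_e \hookrightarrow G_{\tau(e)}$ to be strict and hence of index at least two, giving $n_{\iota(e)}, n_{\tau(e)} \geqslant 2 n_e$ and so the sharp bound $\frac{1}{n_e} \geqslant \frac{1}{n_{\iota(e)}} + \frac{1}{n_{\tau(e)}}$. For loops only $\frac{1}{n_e} \geqslant \frac{1}{n_v}$ is available. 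Substituting these estimates and collecting terms by vertex produces
\[\sigma \leqslant \sum_v \frac{1 - N(v) - L(v)}{n_v},\]
where $N(v)$ counts non-loop edge-endpoints at $v$ and $L(v)$ counts loops at $v$. Since $\cG$ is connected and non-trivial, $N(v) + L(v) \geqslant 1$ at every vertex, so every summand is non-positive.

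Finally I would chase the equality case by demanding that every inequality above be sharp: $N(v) + L(v) = 1$ at every vertex, $[G_{\iota(e)}:G_e] = [G_{\tau(e)}:G_e] = 2$ at every non-loop edge, and $G_e = G_v$ at every loop. A short case analysis on connected graphs with $N(v) + L(v) = 1$ everywhere leaves only two possibilities: a single vertex carrying a single loop, forcing $G = G_{v_0}*_\phi$ with $\phi$ an isomorphism; or two vertices joined by a single non-loop edge, forcing $G = A*_C B$ with $[A:C] = [B:C] = 2$. Both are precisely the graphs of groups whose Bass-Serre tree is a line, as claimed. The main obstacle I anticipate is the edge-vertex bookkeeping together with the loop/non-loop dichotomy: reducedness does \emph{not} forbid loops with $G_e = G_v$, and indeed such loops are exactly what realises the first equality case, so the asymmetry between the loop and non-loop bounds must be tracked carefully throughout.
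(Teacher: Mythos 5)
Your proof is correct, and it takes a genuinely different route from the paper. The paper proceeds by induction on the number of edges: it verifies the one-edge base case directly from the amalgam and HNN formulas, then shows in the inductive step that attaching a further reduced edge to a graph of groups with $\frac{\chi(G)}{\chi(H)} \leqslant 0$ makes the ratio strictly negative; the equality case then reduces to examining the one-edge graphs. Your approach instead passes directly to the closed-form identity $\chi(G)=\chi(H)\cdot\sigma$ with $\sigma=\sum_v \frac{1}{n_v}-\sum_e\frac{1}{n_e}$ and bounds $\sigma$ vertex-by-vertex, collecting each edge's contribution at its endpoints. This buys you a cleaner treatment of the equality case: the inequality $\sigma\leqslant \sum_v\frac{1-N(v)-L(v)}{n_v}$ makes it transparent that $\chi(G)=0$ forces $N(v)+L(v)=1$ at every vertex together with sharpness of the local bounds, whence the two line-tree configurations fall out of a short graph-theoretic case analysis. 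The paper's inductive argument is a bit terser and leans on the strict negativity produced at each inductive step, which is slightly less explicit about \emph{why} only the one-edge graphs can give zero. One small point worth being careful about in your write-up: in the loop equality case, the Euler characteristic identity $\chi(G_v)=\chi(G_e)$ together with $\chi(H)\neq 0$ forces \emph{both} monomorphisms $G_e\hookrightarrow G_v$ to be onto, not just one of them (this follows because each image is a finite-index subgroup of $G_v$ with the same Euler characteristic as $G_v$); you should spell this out since it is exactly what yields $\phi:A\to A$ an isomorphism rather than, say, an ascending HNN extension.
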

\begin{proof}
Since $\chi(G')=\chi(G)[G:G']$ when $G'$ is a finite index subgroup of $G$, we may assume without loss of generality that $G$ has type $FP$.
If $\chi(H)=0$, then every edge and vertex  group of $\cG$ has zero Euler characteristic, so  (\ref{eqn:amalg}) and (\ref{eqn:hnn}) ensure $\chi(G)=0$. We thus assume $\chi(H)\neq 0$.

We proceed by induction on the number of edges of $\cG$. We assume $\cG$ has one edge. If $G=A*_CB$, then we deduce from (\ref{eqn:amalg}) that \[\chi(G)=\frac{\chi(H)}{[C:H]}\Bigg(\frac{1}{[A:C]}+\frac{1}{[B:C]}-1\Bigg).\] Since $\cG$ is reduced, $[A:C],[B:C]\geq 2$ and so the ratio $\frac{\chi(G)}{\chi(H)}$ is non-positive. If $G=A*_C$, then \[\chi(G)=\frac{\chi(H)}{[C:H]}\Bigg(\frac{1}{[A:C]}-1\Bigg),\] and so $\frac{\chi(G)}{\chi(H)}$ is also non-positive.
For the inductive step, the same argument shows that if  $\frac{\chi(G)}{\chi(H)}\leq 0$ and $B$ and $C$ both contain $H$ as a finite index subgroup, then both $\frac{\chi(G*_C)}{\chi(H)}$ and $\frac{\chi(G*_CB)}{\chi(H)}$ are negative as required. 

Since $\chi(G)$ is negative when $\cG$ has more than one edge, $\chi(G)=0$ only if $\cG$ has one edge. If $G=A*_CB$, then the above formula tells us that $[A:C],[B:C]=2$ if and only if $\chi(G)=0$. Similarly, if $G=A*_C$, then $\chi(G)=0$ if and only if $C=A$. 
\end{proof}

We use this to deduce the following:
\begin{prop}\label{prop:eulerchar}
Suppose $G$ and $H$ are groups of type $VFP$ such that $\vcd(G)=\vcd(H)+1$, $\chi(G)\leq 0$ and $\chi(H)\leq 0$. Then $\chi(G)=0$ and one of the following holds:
\begin{enumerate}
\item $\chi(H)=0$;
\item there is a subgroup $N\vartriangleleft G$, commensurable to $H$, such that $G/N\cong \bbZ$ or $\bbZ_2*\bbZ_2$.
\end{enumerate}
\end{prop}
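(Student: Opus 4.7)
The plan is to combine Theorem \ref{thm:main vfp} with the Euler characteristic bookkeeping in Lemma \ref{lem:gogeuler}. Under the implicit hypothesis $H\alnorm G$ (and noting that groups of type $VFP(\bbZ)$ are automatically almost finitely presented, as observed after Theorem \ref{thm:main vfp}), Theorem \ref{thm:main vfp} expresses $G$ as the fundamental group of a finite graph of groups $\cG$ whose vertex and edge groups are all commensurable to $H$. Passing to the reduced form, we may assume $\cG$ is reduced; since $\vcd(G)>\vcd(H)$, $G$ itself cannot be commensurable to $H$, so $\cG$ has at least one edge.

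Before invoking Lemma \ref{lem:gogeuler}, I would replace $H$ with a finite-index subgroup $H_0\leq H$ that is contained with finite index in every vertex and edge group of $\cG$. Taking $H_0 = H \cap V_1 \cap \dots \cap V_m \cap E_1 \cap \dots \cap E_n$, where $V_i,E_j$ are the (finitely many) vertex and edge groups of $\cG$, does the job: all of these are commensurable to $H$ inside $G$, so their common intersection with $H$ is again commensurable to $H$. Since $\chi(H_0) = [H:H_0]\,\chi(H)$, the sign of $\chi(H_0)$ matches that of $\chi(H)$, so Lemma \ref{lem:gogeuler} applies verbatim with $H_0$ in place of $H$.

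A case split then yields the conclusion. If $\chi(H)=0$, Lemma \ref{lem:gogeuler}(1) gives $\chi(G)=0$ and conclusion (1) of the proposition holds. If $\chi(H)<0$, Lemma \ref{lem:gogeuler}(2) gives $\chi(G)/\chi(H_0)\leq 0$, which combined with the hypothesis $\chi(G)\leq 0$ and $\chi(H_0)<0$ forces $\chi(G)=0$; the ``moreover'' clause of that lemma then forces the Bass--Serre tree of $\cG$ to be a line, so either $G=A*_CB$ with $[A:C]=[B:C]=2$, or $G=A*_\phi$ with $\phi\colon A\to A$ an isomorphism.

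Finally, I would exhibit the normal subgroup $N$ in each line case. For $G = A*_\phi$, the HNN extension is ascending with $\phi$ an isomorphism, so $G \cong A \rtimes_\phi \bbZ$ and $N := A$ is normal with $G/N\cong\bbZ$, commensurable to $H$ since $A$ is. For $G = A*_CB$ with $[A:C]=[B:C]=2$, the subgroup $C$ has index two and is therefore normal in each of $A$ and $B$, hence normal in the generated group $G$; collapsing $C$ gives $G/C \cong (A/C)*(B/C) \cong \bbZ_2*\bbZ_2$, and $N := C$ is commensurable to $H$ because it is an edge group of $\cG$. The only real subtlety is the bookkeeping step of reconciling the ``commensurable to $H$'' output of Theorem \ref{thm:main vfp} with the ``contains $H$ with finite index'' hypothesis of Lemma \ref{lem:gogeuler}, which is handled by passing to $H_0$; after that, both the Euler characteristic identity and the explicit description of $N$ follow immediately.
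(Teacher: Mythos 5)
Your proof is correct and follows essentially the same route as the paper's: invoke the main splitting theorem (you correctly cite Theorem \ref{thm:main vfp} rather than Theorem \ref{thm:main_technical}, since the hypothesis is stated over $\bbZ$; the paper's citation of the latter is a minor imprecision), replace $H$ by a finite-index subgroup $H_0$ contained in all vertex and edge groups, and apply Lemma \ref{lem:gogeuler}. You supply a few useful details the paper leaves implicit — noting that the almost-normality hypothesis $H\alnorm G$ must be in force, that the graph of groups may be taken reduced and non-trivial since $\vcd(G)>\vcd(H)$, and spelling out the two ``line'' cases to exhibit the normal subgroup $N$ explicitly — but the underlying argument is identical.
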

\begin{proof}
By Theorem \ref{thm:main_technical}, $G$ splits as a graph of groups in which all vertex and edge groups are commensurable to $H$. By replacing $H$ with a finite index subgroup, we may assume $H$ is contained in every vertex and edge group; this clearly doesn't affect the hypothesis that $\chi(H)\leq 0$.
If $\chi(G)\neq 0$, then it follows from Lemma \ref{lem:gogeuler} that $\chi(H)\neq 0$ and $\frac{\chi(G)}{\chi(H)}<0$, which cannot be the case as we assumed $\chi(G)\leq 0$ and $\chi(H)\leq 0$. Thus $\chi(G)=0$, and the conclusion follows from Lemma \ref{lem:gogeuler}.
\end{proof}

\begin{proof}[Proof of Theorem \ref{thm:1rel}]
We adapt the proof of \cite[Corollary 4]{bieri1978cd2}. We first show that $G$ is finitely generated. If not, then we can write $G=G'*F$, where $F$ is free and $G'$ is a finitely generated 1-relator group containing $H$. If $F\neq 1$, then $H$ cannot be an almost normal subgroup of $G$, so we deduce that $G$  is finitely generated and so has a presentation of the form $\langle g_1,\dots g_n\mid r^m\rangle$, where $n\geq 1$ and $m>0$.  As was noted in \cite[\S VIII.11 Example 3]{brown1982cohomology}, $G$ is of type $VFP$ and has virtual cohomological dimension at most 2. Moreover, $\chi(G)=1-n+\frac{1}{m}\leq 0$ as in \cite{bieri1978cd2}.

Since $H$ is an infinite index subgroup of $G$, Proposition \ref{prop:type vf} and Lemma \ref{lem:higher cohom vanish} ensure that $\vcd(H)<\vcd(G)$. As $H$ is infinite, $\vcd(H)> 0$, so $\vcd(H)=1$ and $\vcd(G)=2$. By Theorem \ref{thm:main vfp}, $G$ splits as a graph of groups all of whose edge and vertex groups are commensurable to $H$. The Euler characteristic of a free group of rank $r$ is $1-r$, so  $\chi(H)\leq 0$ by Theorem \ref{thm:stallingsCD1}. By Proposition \ref{prop:eulerchar}, $\chi(G)=0$ and so $m=1$ and $n=1$. Thus $G$ is torsion free and 2-generated. Proposition \ref{prop:eulerchar} also ensures that either $\chi(H)=0$, in which case either $H$ is infinite cyclic or  $H$ is commensurable to a normal free subgroup $N$ with  $G/N\cong \bbZ$ or $\bbZ_2*\bbZ_2$.
\end{proof}

\section{Almost normal subgroups of virtual duality groups}\label{sec:duality groups}
We examine almost normal subgroups  of  virtual duality groups defined in \cite{bierieckmann1973duality}. The following characterisation of duality groups can be taken as a definition for the purposes of this article:
\begin{prop}[{\cite[Proposition 3.1]{bieri1976normal}}]\label{prop:duality def}
Let $R$ be a commutative ring. We say a group $G$ is a virtual duality group of dimension $n$ over $R$ if and only if the following hold:
\begin{enumerate}
\item $G$ is type $VFP(R)$;
\item $H^i(G,R G)=0$ for $i\neq n$;
\item $H^n(G,RG)$ is a flat $R$-module.
\end{enumerate}
Moreover, $G$ is said to be a Poincar\'e duality group of dimension $n$ over $R$ if  $H^n(G,R G)\cong R$ as $R$-modules.
\end{prop}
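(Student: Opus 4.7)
The plan is to establish the equivalence of the stated conditions with the classical definition of a duality group, namely the existence of a (right) $RG$-module $D$ (the \emph{dualizing module}) together with natural isomorphisms $H^i(G,M) \cong H_{n-i}(G, D \otimes_R M)$ for every $i$ and every left $RG$-module $M$, where $D \otimes_R M$ carries the diagonal $G$-action. First I would reduce to the case where $G$ itself is of type $FP(R)$: if $G'$ is a finite index subgroup of $G$, then $H^i(G,RG) \cong H^i(G',RG')$ as $R$-modules (via the decomposition $RG \cong \bigoplus_{gG' \in G/G'} g \cdot RG'$ as a right $RG'$-module and Shapiro's lemma), so conditions (2) and (3) pass between $G$ and $G'$, and virtual duality for $G$ just means ordinary duality for some such $G'$.

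Assuming $G$ is of type $FP(R)$, I would fix a finite length resolution $\bP_\bullet \to R$ by finitely generated projective left $RG$-modules and consider the cochain complex $\bC^\bullet := \Hom_{RG}(\bP_\bullet, RG)$ of finitely generated projective right $RG$-modules, whose cohomology is $H^\bullet(G, RG)$. Under (2) and (3), this cohomology is concentrated in degree $n$ and equals the $R$-flat module $D := H^n(G,RG)$, so reindexing $\bC^{n-\bullet}$ as a chain complex yields a finite length projective resolution of $D$ as a right $RG$-module. For any left $RG$-module $M$, finite generation and projectivity of each $\bP_i$ give a natural isomorphism $\Hom_{RG}(\bP_i, M) \cong \bC^i \otimes_{RG} M$, so $H^i(G,M)$ is computed by $H^i(\bC^\bullet \otimes_{RG} M)$, which the above reindexing identifies with $\tor_{n-i}^{RG}(D, M)$. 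The standard untwisting identification $\tor_j^{RG}(D, M) \cong H_j(G, D \otimes_R M)$ with diagonal $G$-action then yields the desired duality formula.

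The converse direction is straightforward: taking $M = RG$ in the duality and using that $D \otimes_R RG$ with diagonal action is free as an $RG$-module (again by untwisting) recovers (2) and identifies $H^n(G,RG) = D$, while $R$-flatness of $D$ follows by applying the duality to a short exact sequence of $R$-modules carrying trivial $G$-action. The Poincar\'e duality characterisation is then the further specialisation $D \cong R$. I expect the main obstacle to be the bookkeeping between left and right $RG$-module structures and the careful handling of the diagonal action in the untwisting identification $\tor_j^{RG}(D,M) \cong H_j(G, D \otimes_R M)$ --- a routine but easy-to-mishandle piece of homological algebra on which the whole argument pivots.
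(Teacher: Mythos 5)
Note first that the paper does not prove this proposition: it cites it from Bieri's 1976 paper and explicitly adopts the characterisation as its working definition of a (virtual) duality group, so there is no internal argument for you to be compared against. Your sketch follows the classical Bieri--Eckmann proof (cf.\ Brown, \emph{Cohomology of Groups}, VIII.10.1). The forward direction is correct: you dualise a length-$n$ finitely generated projective resolution of $R$, use the concentration of $H^\ast(G,RG)$ in degree $n$ to read off a finite projective resolution of $D$ by right $RG$-modules, invoke the natural isomorphism $\Hom_{RG}(P,M)\cong\Hom_{RG}(P,RG)\otimes_{RG}M$ for $P$ finitely generated projective, and apply the untwisting $\tor_j^{RG}(D,M)\cong H_j(G,D\otimes_R M)$; this last step is exactly where $R$-flatness of $D$ enters, and you correctly identify it as the delicate point.

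In the converse, your Shapiro-lemma computation after untwisting $D\otimes_R RG$ to an induced module correctly recovers condition (2). (A small slip: $D\otimes_R RG$ with the diagonal action is \emph{induced}, not free, unless $D$ is $R$-free; what actually matters is that Shapiro's lemma applies to any induced module, so this is harmless.) The one genuine gap is the closing flatness claim. To obtain a long exact sequence in $H_{n-\ast}(G,D\otimes_R-)$ from a short exact sequence of trivial $R$-modules you would already need $D\otimes_R-$ to carry that sequence to a short exact sequence of $G$-modules, which is precisely the flatness being sought, so the argument as phrased is circular. Either $R$-flatness of $D$ is built into the definition of duality group over $R$ that Bieri's Proposition~3.1 is characterising (in which case there is nothing to prove in that direction), or a different argument is required --- compare Bieri--Eckmann's treatment over $\mathbb Z$, where torsion-freeness of $D$ is established by a separate homological argument rather than read off from naturality of the duality isomorphism.
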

In this section we prove the following:
\begin{thm}\label{thm:duality}
Let $R$ be a PID, and let $H\alnorm G$ be groups of type $VFP(R)$.
\begin{enumerate}
\item \label{item:duality 1} If $G$ is a virtual duality group over $R$, then so is $H$.
\item \label{item:duality 2} If $H$ is a virtual duality group over $R$ and $\vcd_R(G)=\vcd_R(H)+1$, then $G$ is a virtual duality group over $R$.
\end{enumerate}
\end{thm}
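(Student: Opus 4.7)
My plan is to prove the two parts separately: part (2) follows quickly from the splitting theorem of the previous section combined with a classical gluing principle, while part (1) is the substantive content and requires a careful analysis based on the coarse K\"unneth formula.

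For part (2), since $\vcd_R(G) = \vcd_R(H)+1$, Theorem \ref{thm:main vfp} gives $G$ as the fundamental group of a finite graph of groups in which every vertex and edge group is commensurable to $H$. Since the property of being a virtual duality group over $R$ is a commensurability invariant (condition (3) of Proposition \ref{prop:duality def} and the finite-index sub/supergroup behaviour of $H^*(-,RG)$), every vertex and edge group in the decomposition is a virtual duality group of dimension $m:=\vcd_R(H)$ over $R$. I would then invoke a classical gluing principle essentially due to Bieri--Eckmann: if a group acts cocompactly and without inversions on a simplicial tree with all vertex and edge stabilisers virtual duality groups of the same dimension $m$ over $R$, then the group is itself a virtual duality group of dimension $m+1$ over $R$. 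Concretely, feeding the Mayer--Vietoris long exact sequence of the graph of groups into $H^*(-,RG)$ and using that $H^i(G_v,RG)$ vanishes outside degree $m$ collapses everything to a two-term exact sequence exhibiting $H^{m+1}(G,RG)$ as flat and $H^i(G,RG)=0$ for $i\neq m+1$.

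For part (1), the plan is to apply Theorem \ref{thm:coarse kunneth} to the coarse bundle $G\to G/H$ with fibre $H$. After reducing to the case in which $G$ itself (not just virtually) is a duality group by passing to a finite-index subgroup of type $FP(R)$ and replacing $H$ by its intersection, I would set $A_i := \coarse^i(H;R)$ and $B_j := \coarse^j(G/H;R)$. Using Proposition \ref{prop:coarse cohom vs cohom} to identify $\coarse^k(G;R)\cong H^k(G,RG)$, the coarse K\"unneth formula gives a split short exact sequence with outer terms $\bigoplus_{i+j=k}A_i\otimes_R B_j$ and $\bigoplus_{i+j=k+1}\tor^R_1(A_i,B_j)$; the duality hypothesis on $G$ forces $A_i\otimes B_j=0$ for $i+j\neq n$, while the Tor summand at $k=n$ is a direct summand of the flat module $H^n(G,RG)$ that is $R$-torsion (since $\tor^R_1$ over a PID is torsion), hence vanishes. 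Thus $\tor^R_1(A_i,B_j)=0$ for all $i,j$ and $H^n(G,RG)\cong\bigoplus_{i+j=n}A_i\otimes_R B_j$. Passing to the fraction field $R_0$ via Proposition \ref{prop:univ coefficient}, flatness of $H^n(G,RG)$ ensures $\coarse^n(G;R_0)\neq 0$, and the clean K\"unneth over the field $R_0$ forces the supports of $\coarse^*(H;R_0)$ and $\coarse^*(G/H;R_0)$ to be singletons $\{m_0\}$ and $\{p_0\}$ with $m_0+p_0=n$; in particular $A_i$ is $R$-torsion for $i\neq m_0$ and $m_0 = \vcd_{R_0}(H)$ is the prospective duality dimension.

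The main obstacle will be lifting this single-degree concentration from $R_0$ back to $R$: one must deduce $A_i = 0$ for all $i\neq m_0$ and that $A_{m_0}$ is a flat $R$-module. My plan is to exploit the identification $A_{m_0}\otimes_R B_{p_0}\cong H^n(G,RG)$: any $p$-torsion element $a\in A_{m_0}$ yields a $p$-torsion element $a\otimes b$ in a flat module, so $a\otimes b=0$ for every $b\in B_{p_0}$; combined with the long exact tensor sequence obtained from $R/(p)\hookrightarrow A_{m_0}$ and the universal vanishing $\tor^R_1(A_{m_0},B_{p_0})=0$, this severely constrains the torsion of $A_{m_0}$. Running the parallel analysis for each torsion $A_i$ with $i\neq m_0$ using the stronger vanishing $A_i\otimes B_{p_0}=0$, and repeating the arguments at every residue field $R/(p)$ via Proposition \ref{prop:univ coefficient} (so that every potentially offending prime is seen, with existence of a good prime guaranteed by Proposition \ref{prop:type vf}), should be enough to conclude $A_i=0$ for $i\neq m_0$ and $A_{m_0}$ flat. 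This is the delicate coefficient-theoretic step, where the $\otimes$ and $\tor^R_1$ vanishing data must be assembled coherently across all primes of $R$.
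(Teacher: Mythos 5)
Your treatment of part~(2) is the paper's argument: apply Theorem~\ref{thm:main vfp} and cite the Bieri--Eckmann gluing principle (the paper references \cite[\S 9.7]{bieri1981homological}).

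For part~(1), you and the paper start the same way: feed the coarse K\"unneth formula over $R$ into the duality hypotheses, conclude that all $\tor^R_1$ terms vanish (as torsion summands of the torsion-free module $H^n(G,RG)$), pass to $R_0$ to see that $\coarse^*(H;R_0)$ and $\coarse^*(G/H;R_0)$ are each concentrated in a single degree $m_0$ and $n-m_0$, and hence obtain $H^n(G,RG)\cong \coarse^{m_0}(H;R)\otimes_R\coarse^{n-m_0}(G/H;R)$. Where the two proofs diverge is the lift from $R_0$ back to $R$. The paper does \emph{not} try to verify the duality conditions over $R$ directly; instead it invokes Bieri's criterion (\cite[Theorem 3.7]{bieri1976normal}) that $H$ is a virtual duality group of dimension $m$ over $R$ if and only if it is one of dimension $m$ over each residue field $R_i$, $i\in\bbP_R\cup\{0\}$. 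The K\"unneth formula over each field already shows that $H$ is a duality group of some dimension $m_i$ over $R_i$; the only thing left to check is that all these $m_i$ coincide, and this follows by tensoring the identity $\coarse^n(G;R)\cong\coarse^{m_0}(H;R)\otimes_R\coarse^{n-m_0}(G/H;R)$ with $R_p$ (using Proposition~\ref{prop:univ coefficient} and $H^{n+1}(G,RG)=0$) to get $\coarse^{m_0}(H;R)\otimes_R R_p\neq 0$, hence $\coarse^{m_0}(H;R_p)\neq 0$, hence $m_p=m_0$.

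Your plan is to prove directly that $\coarse^i(H;R)=0$ for $i\neq m_0$ and that $\coarse^{m_0}(H;R)$ is flat. This is strictly harder, and the ``delicate coefficient-theoretic step'' you flag at the end is a genuine gap, not just an unfinished computation. The constraints you have in hand --- that $A_i\otimes_R B_j=0$ and $\tor_1^R(A_i,B_j)=0$ for all $j$ whenever $i\neq m_0$, plus the residue-field information --- do not by themselves force $A_i=0$: for example $A=\bbQ/\bbZ$ satisfies $A\otimes_\bbZ\bbQ=0$ and $\tor_1^\bbZ(A,\bbQ)=0$, yet $A\neq 0$, and to rule out such divisible torsion in $A_i$ you would also need $\tor_1(A_i,R_p)=0$ for every $p$, which is not among the vanishings your K\"unneth bookkeeping supplies (that Tor sits inside $\coarse^{i-1}(H;R_p)$, which is nonzero precisely when $i-1=m_p$). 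The long exact Tor sequence you propose from $R/(p)\hookrightarrow A_{m_0}$ also does not close: the piece you get is
\begin{equation*}
\tor_1(A_{m_0},B_{p_0})=0\longrightarrow \tor_1(A_{m_0}/(R/(p)),B_{p_0})\longrightarrow R/(p)\otimes B_{p_0}\longrightarrow A_{m_0}\otimes B_{p_0},
\end{equation*}
and nothing here kills the $p$-torsion of $A_{m_0}$ without further knowledge of $B_{p_0}$. In short, you are missing the key input the paper uses --- Bieri's field-by-field criterion for virtual duality --- and without it your route does not terminate. If you add that citation, your argument reduces to the paper's: you need only establish $m_p=m_0$ for every prime $p$, which is exactly the tensor-with-$R_p$ computation above.
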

\begin{proof}
(\ref{item:duality 1}): We recall that $R_0$ is the field of fractions of $R$, and that for each prime $p\in \bbP_R$, $R_p$ is the field $R/pR$. It follows from Proposition \ref{prop:unif0-acycl} that $G$ is a duality group of dimension $n$ over the field $R_i$ for every $i\in \bbP_R\cup\{0\}$. 
We now apply Theorem \ref{thm:coarse kunneth} to calculate the coarse cohomology of $G$ in terms of the coarse cohomology of $H$ and $G/H$. In particular, $\coarse ^{m_i}(H;R_i)\neq 0$  for some unique $m_i\in \bbZ$.   Thus $H$ is an $m_i$-dimensional virtual duality group over the field $R_i$. 
Bieri showed that $H$ is a virtual duality group of dimension $m$ over $R$ if and only if it is a virtual duality group of dimension $m$ over $R_i$ for every $i\in \bbP_R\cup\{0\}$ \cite[Theorem 3.7]{bieri1976normal}. Thus to show $H$ is a virtual duality group, we need only show that $m_0=m_i$ for every $i\in \bbP_R$. 

Since $H$ is a duality group of dimension $m\coloneqq m_0$ over $R_0$, $\coarse^i(H;R_0)=0$ for $i\neq m$. It follows from Proposition \ref{prop:univ coefficient} that $\coarse^i(H,R)$ is a torsion $R$-module for $i\neq m$. 
In particular, $\coarse^i(H;R)\otimes _R\coarse ^j(G/H;R)$ is torsion for every $i\neq m$. Moreover, for any $R$-modules $A$ and $B$, $\tor_1^R(A,B)$ is always torsion  whenever $R$ is an integral domain \cite[Theorem 7.15]{rotman2009homalg}.
As $\coarse^n(G;R)$ is torsion-free, Theorem \ref{thm:coarse kunneth} implies \[\coarse^n(G;R)\cong \coarse^m(H;R) \otimes_R \coarse^{n-m}(G/H;R).\]

Thus for any $p\in \bbP_R$, Proposition \ref{prop:univ coefficient} ensures
\begin{align*}
\coarse^n(G,R_p)&\cong \coarse^n(G,R)\otimes_R R_p\bigoplus \tor_1^R(\coarse^{n+1}(G;R),R_p)\\
&\cong \Big(\coarse^m(H;R) \otimes_R \coarse^{n-m}(G/H;R)\Big)\otimes_R R_p\\
&\cong \Big(\coarse^m(H;R)\otimes_{R} R_p\Big) \otimes_{R_p} \Big(\coarse^{n-m}(G/H;R)\otimes_{R} R_p\Big).
\end{align*}
Hence $\coarse^m(H;R)\otimes_{R} R_p\neq 0$, so by Proposition \ref{prop:univ coefficient},   $\coarse^m(H;R_p)\neq 0$ for each $p\in \bbP_R$. Since $H$ is a  duality group of dimension $m_p$ over $R_p$, we  deduce that $m_p=m$ for every $p\in \bbP_R$ as required.

(\ref{item:duality 2}): This follows easily from Theorem \ref{thm:main vfp} and  \cite[\S 9.7]{bieri1981homological}.
\end{proof}

Specialising to the case of Poincar\'e duality groups we have the following:
\begin{thm}\label{thm:pdn_gps}
Suppose $R$ is a PID and $G$ is a virtual Poincar\'e duality group over $R$. If $H\alnorm G$ is of type $VFP(R)$, then $H$ is a virtual Poincar\'e duality group over $R$. Moreover, if $\vcd_R(G)=\vcd_R(H)+1$, then $H$ is a commensurable to a subgroup $N\vartriangleleft G$ such that $G/N\cong \bbZ$ or $\bbZ_2*\bbZ_2$.
\end{thm}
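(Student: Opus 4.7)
The proof breaks into the two claims; both rest on the coarse K\"unneth theorem applied to a coarse bundle in which $H$ is the fibre. By Theorem \ref{thm:duality}(\ref{item:duality 1}), $H$ is already a virtual duality group over $R$ with flat dualising module $D \coloneqq \coarse^{m}(H;R)$, where $m = \vcd_R(H)$; so I only have to show $D \cong R$. Applying Theorem \ref{thm:coarse kunneth} to the coarse bundle $G \to G/H$, together with the torsion-freeness of $\coarse^n(G;R) \cong R$ and the torsion nature of $\coarse^i(H;R)$ for $i \neq m$ and of all $\tor^R_1$ terms (exactly as in the proof of Theorem \ref{thm:duality}(\ref{item:duality 1})), yields
\[
R \;\cong\; D \otimes_R B, \qquad B \coloneqq \coarse^{n-m}(G/H;R),\ n = \vcd_R(G).
\]
The deduction $D \cong R$ is then linear-algebraic. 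Write $1 \in R$ as $\sum_i d_i \otimes b_i$ and set $B' \coloneqq \sum_i Rb_i \subseteq B$. Flatness of $D$ gives an injection $D \otimes_R B' \hookrightarrow D \otimes_R B \cong R$ whose image contains $1$ and is an $R$-submodule of $R$, hence equals $R$; so $D \otimes_R B' \cong R$. Decomposing $B' \cong R^k \oplus T$ by the structure theorem (with $T$ torsion), torsion-freeness of $D$ makes $D \otimes_R T$ torsion, so it vanishes since $R$ is torsion-free; thus $D^k \cong R$. Tensoring $R \cong D \otimes_R B$ with the fraction field $R_0$ shows $D$ has $R$-rank one, forcing $k=1$ and $D \cong R$. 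This gives $H^m(H,RH) \cong R$, so $H$ is virtual Poincar\'e duality.

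Now suppose $\vcd_R(G) = \vcd_R(H)+1$. Theorem \ref{thm:main vfp} produces a graph-of-groups splitting $\cG$ of $G$ with every vertex and edge group commensurable to $H$. Let $T$ be the associated Bass-Serre tree, on which $G$ acts cocompactly and without inversions with stabilisers commensurable to $H$. The orbit map $G \to T$ is a coarse bundle with fibre commensurable to $H$ (equivalently, $T$ is quasi-isometric to the quotient space $G/H$ by \cite{margolis2019almostnormal}), so a second application of Theorem \ref{thm:coarse kunneth}, using that both $H$ and $G$ are now Poincar\'e duality with top coarse cohomology $R$, computes
\[
\coarse^1(T;R) \;\cong\; \coarse^n(G;R) \;\cong\; R.
\]
Proposition \ref{prop:cohomatinf}(\ref{item:cohomatinf1}) then gives $e(T) = 2$, and since $T$ is a minimal cocompact $G$-tree, $T$ must be a bi-infinite line. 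The $G$-action therefore factors through a homomorphism $\phi\colon G \to \mathrm{Aut}_0(T) \cong \bbZ_2 * \bbZ_2$, where $\mathrm{Aut}_0$ denotes the inversion-free automorphisms. Put $N \coloneqq \ker \phi$: vertex stabilisers in $\bbZ_2 * \bbZ_2$ have order at most two, so $[G_v : N] \leq 2$ for every vertex $v$, and therefore $N$ is commensurable to $G_v$ and hence to $H$. Finally, $N$ has infinite index in $G$ (since $H$ does, by Lemma \ref{lem:higher cohom vanish}), so $G/N$ is an infinite subgroup of $\bbZ_2 * \bbZ_2$; the only such subgroups are $\bbZ$ and $\bbZ_2 * \bbZ_2$ itself.

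The main obstacle I anticipate is the linear-algebraic step in the first part: extracting $D \cong R$ from the abstract identity $D \otimes_R B \cong R$ for a flat module $D$ and an otherwise unconstrained $B$ requires juggling flatness of $D$, torsion-freeness of $R$, the structure theorem for finitely generated modules over a PID, and a rank computation, in just the right order. A secondary technical point is to justify that the orbit map $G \to T$ sits in the coarse bundle framework of Section \ref{sec:coarse bundles} so that Theorem \ref{thm:coarse kunneth} applies; this is essentially the quasi-isometry $T \sim_{QI} G/H$ from \cite{margolis2019almostnormal}, and the same end-count conclusion can alternatively be reached by applying K\"unneth to $G \to G/H$ and invoking that quasi-isometry.
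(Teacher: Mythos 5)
Both halves of your proof are logically sound, but each takes a noticeably longer route than the paper's.

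For the first claim (that $D\coloneqq\coarse^m(H;R)\cong R$), your structure-theorem argument works: you extract a finitely generated $B'\subseteq B$ with $D\otimes_R B'\cong R$, kill the torsion part of $B'$ using that $R$ is torsion-free, deduce $D^k\cong R$, then compute ranks over $R_0$ to force $k=1$. (One phrasing quibble: it is the torsion of $T$, not the torsion-freeness of $D$, that makes $D\otimes_R T$ torsion.) The paper avoids all of this by instead picking a single element $x\in B$ that is \emph{not} a torsion element (such an $x$ exists, since otherwise $D\otimes_R B$ would be a torsion module rather than $R$). Then $Rx\cong R$, and flatness of $D$ gives an injection $D\cong D\otimes_R Rx\hookrightarrow D\otimes_R B\cong R$, so $D$ is isomorphic to a non-zero ideal of the PID $R$, hence to $R$. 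This one-line observation replaces your decomposition and rank count.

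For the second claim, your derivation is essentially a self-contained re-proof of a cited fact, and it has a small gap you should patch. You invoke ``$T$ is a minimal cocompact $G$-tree'' to conclude that the two-ended tree is a bi-infinite line, but the graph of groups coming out of Theorem \ref{thm:main vfp} (via Dunwoody accessibility) is not asserted to be reduced or minimal; you must first pass to the minimal invariant subtree, or reduce the graph of groups, before the ``two ends $\Rightarrow$ line'' step is legitimate. You also correctly flag the secondary issue that coarse K\"unneth is stated for $G\to G/H$ rather than for an orbit map $G\to T$, and that the cleanest route is to apply it to $G\to G/H$ directly. If you do that, $\coarse^1(G/H;R)\cong R$ gives $e(G/H)=\tilde e(G,H)=2$, and the conclusion is then an immediate citation: the paper quotes Corollary 3.4 of Kropholler's relative ends paper, or alternatively combines Theorem \ref{thm:main vfp} with Corollary 3.20 of \cite{margolis2019almostnormal}, rather than re-deriving the infinite-dihedral structure from the tree action as you do. Your Bass--Serre computation (kernel $N$ of the action on the line, index-$\le 2$ vertex stabilisers, infinite subgroups of $\bbZ_2*\bbZ_2$) is correct once minimality is supplied, so this is a stylistic rather than substantive divergence.
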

\begin{proof}
Suppose $G$ is a virtual Poincar\'e duality group over $R$ of dimension $n$. As in the proof of Theorem \ref{thm:duality}, 
\[R\cong \coarse^{n}(G;R)\cong \coarse^m(H;R) \otimes_R \coarse^{n-m}(G/H;R),\] and $\coarse^m(H;R)$ is a  non-zero torsion-free $R$-module. Pick some non-zero $x\in  \coarse^{n-m}(G/H;R)$. As $\coarse^m(H;R)$ is torsion-free, hence flat, we have an injection of $R$-modules \[\coarse^m(H;R)\cong \coarse^m(H;R)\otimes_R Rx \hookrightarrow \coarse^{n}(G;R) \cong R.\] Thus $\coarse^m(H;R)$ is isomorphic to an ideal of $R$, hence isomorphic to $R$.

When $\vcd_R(G)=\vcd_R(H)+1$, the preceding argument shows that $\coarse^{1}(G/H;R)\cong R$, and so $e(G/H)=\tilde e(G,H)=2$. The result follows from \cite[Corollary 3.4]{kropholler1989relative}. Alternatively, one can combine Theorem \ref{thm:main vfp} and  \cite[Corollary 3.20]{margolis2019almostnormal} to deduce that $G$ acts on a line with vertex and edge stabilizers commensurable to $H$.
\end{proof}
We use this to classify finitely generated almost normal subgroups of 3-manifold groups.
In the following corollary, a \emph{surface group} is the fundamental group of a closed orientable surface other than the 2-sphere, and a \emph{3-manifold group} is the fundamental group of an irreducible orientable closed  3-manifold.
\begin{cor}
Let $G$ be a 3-manifold group. If $H\alnorm G$ is finitely generated, infinite and of infinite index, then $H$ is commensurable to a normal subgroup $N\vartriangleleft G$, where $N$ is either infinite cyclic or a surface group.
\end{cor}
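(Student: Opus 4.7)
The plan is to verify that $(G,H)$ satisfies the hypotheses of Theorem \ref{thm:pdn_gps}, use it to identify $H$ as a low-dimensional virtual Poincar\'e duality group, and then treat the two possible dimensions separately. Since $G=\pi_1(M)$ is infinite (as $H\leq G$ is infinite), the sphere theorem forces $M$ to be aspherical, so $G$ is a torsion-free PD$_3$-group over $\bbZ$. The requirement that $H$ be of type $VFP(\bbZ)$ follows from $3$-manifold theory: Scott's compact core theorem produces a compact submanifold $C\subseteq M_H$ of the cover $M_H\to M$ with $\pi_1(C)=H$, and $C$ may be chosen to have incompressible boundary, hence is Haken and aspherical, giving a finite Eilenberg--MacLane space for $H$.

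By Theorem \ref{thm:pdn_gps}, $H$ is a virtual PD$_k$-group over $\bbZ$; here $k\geq 1$ because $H$ is infinite, and $k\leq 2$ by Strebel's theorem on subgroups of infinite index in PD$_n$-groups. I then split into two cases.

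If $k=2$, then $\vcd(G)=\vcd(H)+1$, so the second half of Theorem \ref{thm:pdn_gps} produces $N\vartriangleleft G$ commensurable to $H$ with $G/N\cong\bbZ$ or $\bbZ_2*\bbZ_2$. Being commensurable with the finitely generated group $H$, $N$ is itself finitely generated. When $G/N\cong\bbZ$, Stallings' fibration theorem exhibits $M$ as a surface bundle over $S^1$ with fibre group $N$, so $N$ is a closed orientable surface group. When $G/N\cong\bbZ_2*\bbZ_2$, I would pass to the index-$2$ subgroup $G'\leq G$ corresponding to the infinite cyclic subgroup of index $2$ in $\bbZ_2*\bbZ_2$; then $G'=\pi_1(M')$ for a double cover $M'\to M$ (still closed, orientable, irreducible), $G'/N\cong\bbZ$, and Stallings applied to $M'$ again realises $N$ as a surface group.

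If $k=1$, then $H$ is torsion-free and virtually infinite cyclic, so $H\cong\bbZ$. This is the main obstacle, as $\vcd(G)=\vcd(H)+2$ and Theorem \ref{thm:pdn_gps} does not directly furnish a commensurable normal subgroup. I would handle this case by appealing to classical $3$-manifold theory: results of Casson--Jungreis, Gabai, and Mess, in tandem with Perelman's geometrization, imply that a closed orientable irreducible $3$-manifold whose fundamental group contains an almost normal infinite cyclic subgroup must be Seifert fibered; the regular fibre then generates a normal infinite cyclic subgroup of $G$ commensurable to $H$. This last step lies outside the coarse-cohomological framework of the paper and is the principal external input required.
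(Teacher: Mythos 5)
Your overall scaffolding is the same as the paper's: sphere theorem to get $G$ a torsion-free $PD_3$-group, Scott's compact core for $H$ of type $VFP$, Strebel to bound $\cd(H)\leq 2$, and Theorem~\ref{thm:pdn_gps} to identify $H$ as a virtual Poincar\'e duality group; then a case split on the dimension. For the $\cd(H)=2$ case your route via Stallings' fibration theorem is a legitimate alternative to the paper's, which instead applies the Eckmann--M\"uller theorem that $PD_2$-groups over $\bbZ$ are surface groups. Both work, and both share the same minor imprecision about orientability of the fibre that the paper itself glosses over, so I won't press that.

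The real issue is the $\cd(H)=1$ case. You want to apply the Seifert Fibre Space theorem (Casson--Jungreis, Gabai, Mess) to conclude $M$ is Seifert fibred and take the regular fibre. But as classically stated, that theorem hypothesizes a \emph{normal} infinite cyclic subgroup of $\pi_1(M)$, whereas you only have an \emph{almost normal} one --- and producing a commensurable normal infinite cyclic subgroup is precisely what the corollary asks you to prove in this case, so you cannot invoke it as a hypothesis. The paper closes this gap without any geometric input: writing $H=\langle h\rangle$ and using $H\alnorm G$ to get relations $s_ih^{n_i}s_i^{-1}=h^{m_i}$ for generators $s_i$ of $G$, it appeals to Kropholler's centrality theorem for $3$-manifold groups to conclude $n_i=\pm m_i$, from which $\langle h^{\lcm(n_1,\dots,n_t)}\rangle\vartriangleleft G$ is immediate. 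Unless you can cite a version of the Seifert Fibre Space theorem formulated under the weaker commensurated-$\bbZ$ hypothesis (you do not, and I do not believe the standard references state one), your step ``almost normal $\bbZ$ implies Seifert fibred'' is circular. The fix is to replace the geometric detour by Kropholler's algebraic argument; then the Seifert machinery (and Perelman) becomes unnecessary.
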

\begin{proof}
It follows from the sphere theorem that $G$ is the fundamental group of an aspherical manifold, hence $G$ is  a Poincar\'e duality group of dimension 3. Combining results of Scott and Strebel, $H$ must be of type $FP$ \cite{scott1973coherence, strebel1977remark} and $m\coloneqq \cd(H)< \cd(G)$. It follows from Theorem \ref{thm:pdn_gps} and a result of Eckmann--M\"uller \cite{eckmannmuller1982plane} that when $\cd(H)=2$, $H\vartriangleleft G$ and $H$  a surface group.

If $\cd(H)=1$, then it follows from Theorem \ref{thm:stallingsCD1} and Theorem \ref{thm:pdn_gps} that $H=\langle h\rangle\cong \bbZ$. Let $S=\{s_1,\dots, s_t\}$ be a generating set of $G$. Since $H\alnorm G$, for every $s_i\in S$, there exist integers $n_i,m_i\in \bbZ$ such that $s_ih^{n_i}s_i^{-1}=h^{m_i}$. By a theorem of Kropholler, $n_i=\pm m_i$ for each $i$ \cite{kropholler1990centrality}. Thus $\langle h^N\rangle\vartriangleleft G$, where $N=\lcm(n_1, n_2, \dots, n_t)$.
\end{proof}

\section*{Acknowledgements}
The author would like to thank Peter Kropholler and Michah Sageev for comments and suggestions relating to this work.

\bibliography{bibliography/bibtex} 
\bibliographystyle{amsalpha}
\end{document}